\renewcommand\Re{{\operatorname{Re}}}
\newcommand\Vol{{\operatorname{Vol}}}
\newcommand\R{{\mathbb{R}}}
\renewcommand\P{{\mathbf{P}}}
\newcommand\E{{\mathbf{E}}}
\newcommand\Z{{\mathbb{Z}}}
\newcommand\F{{\mathbf{F}}}
\newcommand\ep{\varepsilon}
\newcommand\al{\alpha}
\newcommand\Bg{{\mathbf g}}
\newcommand\Bx{{\mathbf x}}
\newcommand\CC{{\mathcal C}}
\newcommand\eps{\varepsilon}
\theoremstyle{plain}
  \newtheorem{theorem}[subsection]{Theorem}
  \newtheorem{lemma}[subsection]{Lemma}
  \newtheorem{corollary}[subsection]{Corollary}
  \newtheorem{example}[subsection]{Example}
  \newtheorem{remark}[subsection]{Remark}
  \newtheorem{claim}[subsection]{Claim}
\theoremstyle{definition}
  \newtheorem{definition}[subsection]{Definition}
\begin{document}

\title[A note on inverse results of random walks in Abelian groups]{A note on inverse results of random walks in Abelian groups}

\author{Jake Koenig}
\address{Department of Mathematics\\ The Ohio State University \\ 231 W 18th Ave \\ Columbus, OH 43210 USA}
\email{koenig.427@osu.edu}

\author{Hoi H. Nguyen}
\address{Department of Mathematics\\ The Ohio State University \\ 231 W 18th Ave \\ Columbus, OH 43210 USA}
\email{nguyen.1261@osu.edu}

\author{Amanda Pan}
\address{Department of Mathematics\\ The Ohio State University \\ 231 W 18th Ave \\ Columbus, OH 43210 USA}
\email{pan.754@osu.edu}

\thanks{The authors are supported by NSF grant DMS-1752345.}


\begin{abstract} In this short note we give various near optimal characterizations of random walks over finite Abelian groups with large maximum discrepancy from the uniform measure. We also provide several interesting connections to existing results in the literature.
\end{abstract}

\maketitle

\section{Introduction}

Let $x_1, \dots, x_n$ be iid Bernoulli random variables taking values $\pm 1$ with probability $1/2$. Given a multiset $A$ of $n$ real numbers  $a_1, \dots, a_n$, we define \footnote{It is clear that this quantity remains the same when the $x_i$ takes value 0 or 1 with probability 1/2.}
\begin{equation}\label{eqn:classical}
\rho(A) := \sup_{a} \P\Big(\sum_{i=1}^n a_ix_i =a\Big).
\end{equation}
 In their study of roots of random polynomials in the 1940s, Littlewood and Offord \cite{LO} raised the question of bounding $\rho(A)$. They showed that if the $a_i$ are nonzero then $\rho(A)=O(n^{-1/2}\log n)$. Shortly after, Erd\H{o}s \cite{E} gave a combinatorial proof of the refinement $\rho(A)\le \binom{n}{n/2}2^{-n}$, which is the optimal bound with no further assumptions on $A$. 

Subsequently, there have been various stronger bounds by Erd\H{o}s and Moser \cite{EM}, Hal\'asz \cite{H}, Katona\cite{Kat}, Kleitman \cite{Kle}, S\'ark\"ozy and Szemer\'edi \cite{SSz}, and Stanley \cite{Stan} for different conditions on the $a_i$.  More recently, motivated by inverse questions in Additive Combinatorics, Tao and Vu initiated a new (inverse) direction to characterize $A$ for which $\rho(A)$ is large, say $\rho(A)\ge n^{-C}$ for some $C>0$ and $n\to \infty$. 

From the inverse perspective, because $A$ has $2^n$ subsums, $\rho({A})\ge n^{-C}$ means that at least $2^n n^{-C}$ among these have the same value. This suggests that  the set should have a rich ``structure". To be more precise, let us recall the notion of \emph{generalized arithmetic progressions} (GAPs). 

\begin{definition}\label{def:GAP} A subset $P$ of $\R$ is a \emph{GAP of rank $r$}, where $r\ge 1$, if it can be expressed in the form 
$$P= \Big\{g_0+ m_1g_1 + \dots +m_r g_r\Big| m_i\in\Z, N_i \le m_i \le N_i'\Big\}.$$ 
The $g_i \in \R$ are the \emph{generators} of $P$. The integer numbers $N_i,N_i'$ are the {\it dimensions} of $P$. We say that $P$ is \emph{proper} if every element of $P$ is equal to a unique such linear combination of the generators. If $N_i=-N_i'$ for all $i$ and if $g_0=0$, we say that $P$ is {\it symmetric}. 

	Finally, when $P$ is symmetric, for $t\in \Z^+$, we define 
$$P_t :=  \Big\{m_1g_1 + \dots +m_r g_r \Big | -tN_i \le m_i \le tN_i\Big\}.$$
\end{definition}

In what follows, given two sets $A,B$, their (Minkowski) sum is defined as the set 
$$A+B:= \{a+b, a\in A, b\in B\}.$$ 
We will write $2A$ for $A+A$. For instance with $P$ as in Example \ref{def:GAP} we have $nP=\{ng_0+ m_1g_1 + \dots +m_r g_r| nN_i \le m_i \le nN_i'\}$, and hence $|nP| \le \prod_{i=1}^r(nN_i' -nN_i+1)$.  Note that if $P$ is symmetric then by Definition \ref{def:GAP} we have $tP = P_t$ for any positive integer $t$.

\begin{example} Assume that $P$ is a proper symmetric GAP of rank $r=O(1)$ and cardinality $n^{O(1)}$, and that all elements of $A$ are contained in $P$. Then as $|nP|\le n^r|P|$, we have  $\rho(A) = \Omega(n^{-O(1)})$. 
\end{example}
The above example shows that if the elements of $A$ belong to a symmetric proper GAP with small rank and small cardinality, then
$\rho(A)$ is large. Tao and Vu \cite{TVstrong,TVinverse}, Vu and the second author \cite{NgV-Adv}, and more recently Tao \cite{Tao}, have justified that these are essentially the only multisets having $\rho(A)$ of polynomial growth \footnote{That is when $\rho(A)\ge n^{-O(1)}$; it is natural to ask what if $\rho\ge \exp(-n^c)$ (subexponential) or $\rho \ge \exp(-cn)$ (exponential), but we are not focusing on these regimes in this note.}. 
\begin{theorem}[Inverse Littlewood-Offord result for $\rho$]\label{theorem:ILO}
Let $\eps<1$ and $C$ be positive constants. Assume that $A=\{a_1,\dots, a_n\}$ is a multiset of real numbers
$$\rho (A)  \ge  n^{-C}. $$
Then, for any $n^\ep \le n' \le n$, there exists a proper symmetric GAP $P$ of rank $r=O_{\ep,C}(1)$ that contains all but $n'$ elements of $A$ (counting multiplicity), where 
$$|P|=\max\Big \{1, O_{C,\ep}(\rho^{-1}/{(n')}^{r/2}) \Big\}.$$ 
\end{theorem}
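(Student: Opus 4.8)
The plan is to follow the Fourier-analytic strategy of Hal\'asz, as refined by Tao--Vu \cite{TVinverse} and Nguyen--Vu \cite{NgV-Adv}: convert the hypothesis into a largeness statement for a ``dual'' Bohr-type set, extract a bounded-rank generalized arithmetic structure from it by the geometry of numbers (or by Freiman's theorem), and then sharpen the size of that structure by a converse, local-central-limit-type counting argument in which the cushion parameter $n'$ is spent. Throughout, $\|t\|$ denotes the distance from $t$ to the nearest integer.

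\textbf{Step 1 (Fourier reduction).} Using the footnote's observation that $\rho(A)$ is unchanged if the $x_i$ take values in $\{0,1\}$, one has
$$\P\Big(\sum_{i=1}^n a_i x_i = a\Big) = \int_0^1 \Big(\prod_{i=1}^n \cos(\pi a_i\xi)\Big)\, e^{\pi i \xi(\sum_i a_i - 2a)}\, d\xi,$$
and hence, using the elementary bound $|\cos(\pi t)| \le \exp(-2\|t\|^2)$,
$$\rho(A) \le \int_0^1 \exp\Big(-2\sum_{i=1}^n \|a_i\xi\|^2\Big)\, d\xi.$$
Since $\rho(A)\ge n^{-C}$, a layer-cake decomposition of the right side produces a scale $\theta = O_C(\log n)$ for which the level set $T_\theta := \{\xi \in [0,1) : \sum_i \|a_i\xi\|^2 \le \theta\}$ has Lebesgue measure $\mu(T_\theta) \gg \rho(A)\, e^{2\theta}(\log n)^{-O(1)} \gg \rho(A)(\log n)^{-O(1)}$. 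On $T_\theta$ the summands average to at most $\theta/n$, so $T_\theta$ is morally a symmetric Bohr set of radius $\approx\sqrt{\theta/n}$; and since discarding $n-m$ of the $a_i$ only enlarges such a level set, one also controls, for any $m\le n$, a comparably large level set for $m$ of the $a_i$ at the larger radius $\approx\sqrt{\theta/m}$. This is the source of the eventual $n'$-dependence.

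\textbf{Step 2 (dual set $\Rightarrow$ bounded-rank GAP).} Next I would convert largeness of the dual set into arithmetic structure on the $a_i$. Intersecting the symmetric convex constraint $\{\xi : \|a_i\xi\|\le\delta\ \forall i\}$ with the torus and applying Minkowski's second theorem to the associated lattice — the geometry-of-numbers step of Tao--Vu's inverse theorem \cite{TVinverse} — shows that, after removing $O(1)$ exceptional directions, all but $n'$ of the $a_i$ lie in a symmetric GAP $Q$ whose rank satisfies $r = O\big(\log(1/\mu(T_\theta))/\log(1/\delta)\big)$, which with $\delta\approx\sqrt{\theta/n'}$ and $\mu(T_\theta)\gg \rho(A)(\log n)^{-O(1)}\ge n^{-C-o(1)}$ is $O(2C/\eps) = O_{\eps,C}(1)$. (An equivalent route: Hal\'asz's inequality turns $\rho(A)\ge n^{-C}$ into a lower bound on the number of zero-sum $2k$-tuples $\pm a_{i_1}\pm\cdots\pm a_{i_{2k}}=0$, after which Balog--Szemer\'edi--Gowers and Freiman's theorem place a large subset of $A$ in a GAP of rank $O_{\eps,C}(1)$.) Finally, replacing $Q$ by a proper GAP of the same rank costs only an $O_r(1)$ factor in size, by the standard progression-to-proper-progression lemma. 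At this stage the size of the GAP is only polynomially controlled, not yet of the sharp order.

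\textbf{Step 3 (sharpening the size), and the main obstacle.} It remains to bring the size down to $O_{\eps,C}\big(\rho(A)^{-1}(n')^{-r/2}\big)$ (with the convention that it is at least $1$, which corresponds to the degenerate case where the local bound below already forces triviality), absorbing the $(\log n)^{O(1)}$ losses accumulated above. For this I would run a converse argument: prune the proper rank-$r$ GAP $P$ so that the retained $a_i$ genuinely span it in all $r$ directions, and then apply a local central limit theorem to the random walk $\sum a_i x_i$ restricted to a positive proportion of the retained coordinates; such a restricted walk is approximately uniform on a dilate of $P$ of cardinality $\gg (n')^{r/2}|P|$, so, since $\rho(A)$ is at most the concentration probability of any restriction, $\rho(A)\ll\big((n')^{r/2}\,|P|\big)^{-1}$, which rearranges to the claimed bound. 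The hard part is exactly this pruning/interface: to harvest the full $\sqrt{n'}$ of spreading per dimension one must ensure the GAP from Step 2 carries no redundant generators and that the retained $a_i$ do not concentrate in a proper subprogression, and one must propagate the parameters $\theta$ and $n'$ uniformly through all three steps so that no spurious $n^{o(1)}$ factor survives; carrying out this pruning while simultaneously keeping $P$ proper, rather than restoring propriety at the end, is the delicate point.
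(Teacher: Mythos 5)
Your Step 1 is on the right track and matches the paper's machinery: level-set decomposition of the Hal\'asz-type Fourier bound, identification of a scale $\ell_0 = O_C(\log n)$ with $|S_{\ell_0}|\exp(-\ell_0) \gg \rho(A)|G|$ (or its torus analogue), and the observation that the spare parameter $n'$ is the resource to be spent. Your averaging step (discarding $n'$ of the $a_i$ so the survivors have small $\ell^2$-correlation with the level set) is also what the paper does. The divergence, and the genuine gap, is in Steps 2--3.

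The paper (and \cite{NgV-Adv}, which it follows) does \emph{not} do a two-pass argument of the kind you propose. There is no intermediate GAP ``of the right rank but only polynomially controlled size'' followed by a converse local-CLT sharpening. Instead the sharp bound $|P| = O(\rho^{-1}/(n')^{r/2})$ comes out in a single shot from the long-range inverse (long-range Freiman--Ruzsa) theorem, i.e.\ Theorem~\ref{longrangeinversetheorem} in this paper, a corollary of \cite[Theorem 1.21]{TVjohn}. Concretely: after averaging, the set $A'$ of $n-n'$ ``good'' elements satisfies, by Cauchy--Schwarz, that every $a \in jA'$ with $j \le k$ has $\sum_{\zeta \in S_{\ell_0}}\|a\zeta\|^2 \le k^2\ell_0 |S_{\ell_0}|/(2n')$; taking $k \asymp \sqrt{n'/\ell_0}$, the iterated sumset $kA''$ (with $A'' = A' \cup\{0\}$) lands inside the dual set $S_{\ell_0}^*$, whose size is bounded by $4\rho^{-1}\exp(-\ell_0+2)$ via the $\sum_a T_a^2 \le |G||S_{\ell_0}|$ second-moment estimate. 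Thus $|kA''| \ll \rho^{-1}$ while $k \gg \sqrt{n'}$, so $|kA''| \le k^d/2^{O(1)}\cdot|A''|$ for $d = O(C/\eps)$. The long-range inverse theorem then yields a proper symmetric GAP $Q \supset A''$ of rank $r \le d$ and size $|Q| \ll |kA''|/k^r \ll \rho^{-1}/(n')^{r/2}$. The crucial $(n')^{r/2}$ gain is exactly the $k^r$ in the denominator of the long-range Freiman conclusion; nothing more is needed.

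Your Step 3 is a genuine gap, and you yourself flag it as ``the hard part'' and ``the delicate point'' without carrying it out. The converse local-CLT route is essentially the earlier strategy of \cite{TVinverse,TVstrong}, which incurs $n^{o(1)}$ or polylogarithmic losses precisely at the pruning step you describe: ensuring the retained $a_i$ spread in all $r$ directions, keeping $P$ proper, and propagating the parameters without slack. That difficulty is not resolved in your sketch, and resolving it is exactly what prompted the move in \cite{NgV-Adv} to the long-range sumset formulation. Separately, your Step 2 is slightly off as stated: Minkowski's second theorem does not apply directly to $T_\theta$, because $T_\theta$ is a level set of an $\ell^2$-average, not a genuine Bohr set $\{\xi : \|a_i\xi\| \le \delta\ \forall i\}$. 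Passing from the average constraint to a pointwise one requires exactly the iterated-sumset/dual-set maneuver the paper uses, so Step 2 and the sharpening of Step 3 are not really separable in the way your sketch suggests. The cleanest fix is to replace Steps 2--3 by: form $kA''$ for $k \asymp \sqrt{n'/\ell_0}$, bound $|kA''| \ll \rho^{-1}$ via the dual-set second-moment estimate, and invoke Theorem~\ref{longrangeinversetheorem}.
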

 
In this note, by using the simple machinery from \cite{NgV-Adv}, in combination with general John-type results for sumsets developed by Tao and Vu from \cite{TVjohn}, we will extend the above theorem to several settings of interest. Our main contributions include inverse results (1) for general finite Abelian groups (Thereom \ref{theorem:ILO:Abelian}); (2) for random walks with constraints (Theorem \ref{theorem:ILOab}); and (3) for classical random walks (Theorem \ref{theorem:ILO:m} and Theorem \ref{theorem:ILO:m:Abelian}). Additionally, we will include various interesting applications such as Corollaries \ref{cor:Abelian:1} and \ref{cor:mix}.

{\bf Notations.} We say that $X \asymp Y$ if $X=O(Y)$ and $Y=O(X)$. We say that $X=\Omega(Y)$ if $X \ge CY$ for some absolute positive constant $C$. Given a parameter $\al$, we say that $X = O_\al (Y)$, or $X \ll_\alpha Y$, if $X \le CY$ and $C$ is allowed to depend on $\alpha$. 

For any $x\in \R$, we define $\|x\|:=\|x\|_{\R/\Z}$ to be the distance of $x$ to the nearest integer. We define $e(x) = \exp(2\pi \sqrt{-1}x )$. 

Finally, if not specified otherwise, the parameter $n$ in this note is assumed to be sufficiently large.

\subsection{Inverse Littlewood-Offord in general finite Abelian groups}

Let $G$ be an additive finite Abelian group and $A=\{a_1,\dots, a_n\}$ be a multiset in $G$. Let $\xi$ be a random variable valued in $\Z$ and define 
 $$\rho_\xi(A) := \sup_{a\in G} \Big|\P\Big(\sum_{i=1}^n a_ix_i =a\Big) -\frac{1}{|G|}\Big|,$$
where  $x_i$ are iid copies of $\xi$. Hence $\rho_\xi$ measures the discrepancy of the random walk $S=\sum_{i=1}^n a_ix_i$ from the uniform measure over $G$.

Next we introduce structures to work with in the finite Abelian group setting. Generalized arithmetic progressions can be defined the same way as in Definition \ref{def:GAP} with generators $g_i$ now from $G$. Here we introduce a more general structure called a {\it coset-progression}.

\begin{definition}\cite[Chapter 5]{TVbook} A coset-progression in $G$ is a set of form $H+P$ where $P$ is a GAP and $H$ is a finite subgroup of $G$. We say that $H+P$ is \textit{symmetric} if $P$ is symmetric as a GAP, that $H+P$ has rank $r$ if $P$ has rank $r$, and that $H+P$ is \textit{proper} if $P$ is proper and $|H+P| = |H||P|$. In addition, for symmetric coset-progressions, we say that $H+P$ is $t$-\textit{proper} if the dilate $H+P_t$ is proper.
\end{definition}

Now we state our main result for this setting.
 
 \begin{theorem}[Inverse Littlewood-Offord for general Abelian groups in sparse setting, main result I]\label{theorem:ILO:Abelian} Let $\eps<1/2$ and $C$ be positive constants. Let $\al$ be a parameter that might depend on $n$, and let $\xi$ be a lazy Bernoulli r.v. of parameter $\al$ (i.e. $\P(\xi=0)=1-\al$ and $\P(\xi=\pm 1) =\al/2$). Assume that
$$\rho_\xi (A)  \ge  n^{-C}.$$
 \begin{enumerate}[(i)] 
 \item If
 $n^{\eps-1}\le \al \le 1-n^{\eps-1}$,
then for any $\alpha^{-1} n^{\ep} \le  n' \le n$, there exists a symmetric proper coset-progression $H+P$ of rank $r =O_{C,\eps}(1)$ that contains all but $n'$ elements of $A$ (counting multiplicity), where 
$$|H+P| \le  \max\Big \{1, O_{C,\eps}\Big(\frac{\rho_\xi^{-1}(A)}{(\min\{\al, 1-\al\} n')^{r/2}}\Big) \Big \}.$$ 
\item If 
$$1/2 \le \al \le 1,$$
then for any $\alpha^{-1} n^{\ep} \le  n' \le n$, there exists a symmetric proper coset-progression $H+P$ of rank $r =O_{C,\eps}(1)$ that contains all but $n'$ elements of the multiset $\{2a, a\in A\}$ (counting multiplicity), where 
$$|H+P| \le  \max\Big\{1,O_{C,\eps}\Big(\frac{\rho_\xi^{-1}(A)}{(n')^{r/2}}\Big)\Big \}.$$ 
\end{enumerate}
\end{theorem}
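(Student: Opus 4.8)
The plan is to reduce Theorem~\ref{theorem:ILO:Abelian} to the real-valued inverse Littlewood–Offord machinery (Theorem~\ref{theorem:ILO}, or more precisely the quantitative version from \cite{NgV-Adv}) combined with the John-type structural theorem for sumsets in arbitrary Abelian groups of Tao and Vu \cite{TVjohn}. The first step is a standard Fourier-analytic lower bound: writing $S=\sum_i a_i x_i$ with $x_i$ iid lazy Bernoulli of parameter $\al$, the characteristic function over $G$ (via additive characters $\chi$) factors as $\prod_{i=1}^n \bigl((1-\al)+\al\cos(\theta(\chi a_i))\bigr)$ for the appropriate angle $\theta$, and the hypothesis $\rho_\xi(A)\ge n^{-C}$ forces a large level set of characters $\chi$ for which $\sum_i \|\theta(\chi a_i)\|^2 = O_C(\log n)$. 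This is the usual ``many characters are nearly trivial on $A$'' dichotomy, and the factor $\al$ (resp.\ $\min\{\al,1-\al\}$) enters because each factor is $1-\Omega(\al\|\cdot\|^2)$.

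The second step is to convert this spectral concentration into additive structure. Here I would follow \cite{NgV-Adv}: the set $\Spec$ of characters that are small on $A$ is itself approximately closed under addition (by a pigeonhole/Plünnecke argument on $\sum_i\|\cdot\|^2$), so some bounded iterated sumset $k\Spec$ has size comparable to $|\Spec|\ge \rho_\xi(A)/|G|^{-1}$-type bounds, actually $|\Spec| \asymp_C \rho_\xi(A)|G|$, up to logarithmic factors absorbed by the $n'$ freedom. Applying the Tao–Vu John-type theorem in $\widehat G$ gives a symmetric proper coset-progression containing a large fraction of $\Spec$; dualizing (taking annihilators of the subgroup part and the ``approximate annihilator'' of the progression part) produces a symmetric proper coset-progression $H+P$ in $G$ itself whose dilate controls all but a controlled number of the $a_i$. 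The counting of the exceptional set — that all but $n'$ of the $a_i$ (where $n'\ge \al^{-1}n^\ep$) land in $H+P$, and that $|H+P|$ satisfies the stated bound with the $(\min\{\al,1-\al\}n')^{r/2}$ denominator — comes from optimizing the usual volume-packing inequality $|nP|\le n^r|P|$ against the entropy/second-moment estimate, exactly as in the real case but with $n'$ playing the role of the ``effective length'' of the walk after accounting for laziness (only about $\al n$ of the steps are nonzero, so the natural dimension-scaling threshold is $\al n$, hence the requirement $n'\gtrsim \al^{-1}n^\ep$).

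For part (ii), when $\al\ge 1/2$ the laziness is no longer the obstruction; instead the issue is that $\xi$ and $-\xi$ may not have matching structure, or more precisely that $x_i$ takes value $0$ with probability $1-\al$ which can be close to $0$ but the symmetrization trick changes. The standard device (used already for $\pm1$ walks with a $0$-atom) is to pass to the symmetrized variable: $x_i - x_i'$ where $x_i'$ is an independent copy, which is supported on $\{-2,-1,0,1,2\}$ — but here one instead uses that $\P(\xi=\pm1)=\al/2\ge 1/4$ is bounded below, so $S$ and a shifted copy differ by $\sum 2a_i \epsilon_i$ with $\epsilon_i$ genuine Bernoulli; this is why the conclusion is phrased for the multiset $\{2a : a\in A\}$ rather than $A$. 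Concretely I would condition on the set of indices where $\xi\ne 0$ (a set of expected size $\al n \ge n/2$), note $\rho$ of the conditional walk is still polynomially large on a non-negligible event, and then it is an honest $\pm1$ walk on the $2a_i$ to which Theorem~\ref{theorem:ILO}'s mechanism applies directly, giving the denominator $(n')^{r/2}$ with no $\al$-loss.

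The main obstacle I anticipate is not any single step but the bookkeeping in the dualization: the Tao–Vu John-type theorem produces a coset-progression for $\Spec\subset\widehat G$, and transferring properness and the rank bound back to a coset-progression in $G$ (rather than just a ``Bohr set'') requires care — one must show the annihilator of a proper progression's generating data is again a proper symmetric coset-progression of comparable rank, and that the $t$-properness (needed so that the dilate $H+P_t$ controls the $a_i$ for the relevant $t\asymp n'$) survives. Quantitatively tracking how the rank $r=O_{C,\eps}(1)$ and the constants in $|H+P|$ depend only on $C$ and $\eps$ (and crucially \emph{not} on $\al$ or $|G|$) through both the sumset-growth step and the John-type step is the delicate part; everything else is a faithful adaptation of \cite{NgV-Adv}.
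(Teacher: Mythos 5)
Your high-level plan diverges from the paper's actual argument in a way that creates two genuine gaps, one of which you partially anticipate but underestimate, and one of which you do not see.

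The first gap is the "dualization" step. The paper (following \cite{NgV-Adv}) never builds a structure inside $\widehat{G}$ and then dualizes. Instead, after locating the large level set $S_{\ell_0}\subset G$ of characters for which $\sum_i\|\zeta\cdot a_i\|^2$ is small, it works \emph{directly with iterated sumsets of $A$ inside $G$}: Cauchy--Schwarz shows that for the non-exceptional $a_i$ (those with $\sum_{\zeta\in S_{\ell_0}}\|a_i\cdot\zeta\|^2\le \ell_0|S_{\ell_0}|/(4\alpha n')$), all $k$-fold sums with $k\asymp\sqrt{\alpha n'/\ell_0}$ remain in the $L^2$-Bohr set $S_{\ell_0}^* = \{a:\sum_{\zeta\in S_{\ell_0}}\|a\cdot\zeta\|^2\le |S_{\ell_0}|/200\}$, whose cardinality is bounded by $4|G|/|S_{\ell_0}|$ via a one-line Plancherel/second-moment computation. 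Thus $|kA''|\lesssim\rho_\xi^{-1}$ directly, and the long range inverse theorem (Theorem~\ref{longrangeinversetheorem:Abelian}) is applied to $A''$ in $G$, with no dualization whatsoever. Your route — show $\Spec\subset\widehat{G}$ has controlled doubling, apply the Tao--Vu John-type theorem in $\widehat G$, and then "dualize the coset-progression" — is not a bookkeeping variant of this: passing from a coset-progression in $\widehat G$ back to one in $G$ means converting a Bohr set into a GAP, which is itself a Freiman-type inverse theorem with its own losses. Worse, the crucial factor $(\min\{\alpha,1-\alpha\}\,n')^{r/2}$ in the final size bound arises precisely because one applies the long range inverse theorem to $kA''$ with a \emph{polynomially large} $k$, and this extra $k^r$ saving has no natural analogue if you only use bounded sumsets of $\Spec$; with large $k$, on the other hand, $k\Spec$ no longer stays inside a controlled level set (the level grows like $k^2\ell_0$), so the spectral-side doubling argument breaks down.

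The second gap is your explanation of part (ii). Conditioning on the indices with $\xi_i\ne 0$ produces an honest $\pm 1$ walk on the original steps $\{a_i\}$, not on $\{2a_i\}$, so this argument — if it worked — would give structure on $\{a_i\}$, which is false: take $G=(\Z/2\Z)^d$ and $\alpha=1$, then $\sum a_i x_i=\sum a_i$ is deterministic, $\rho_\xi(A)$ is maximal, yet $A$ need have no structure at all, only $\{2a:a\in A\}=\{0\}$ does. The actual reason $\{2a\}$ appears is analytic: for $\alpha$ close to $1$ the Fourier multiplier $1-\alpha+\alpha\cos(2\pi x)$ can be close to $-1$ as well as to $+1$, and the bound one proves is $|1-\alpha+\alpha\cos(2\pi x)|\le\exp(-2\alpha\|2x\|^2)$, which decays in $\|2x\|$ rather than $\|x\|$. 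Feeding this into the same level-set machinery naturally yields information about $\|\zeta\cdot 2a_i\|$, hence a coset-progression containing most of $\{2a_i\}$. Your "shifted copy / symmetrization" heuristic is in the right spirit but does not match the conditioning argument you then propose, and the conditioning argument itself leads to a false conclusion.
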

 Our first result is near optimal when $\al$ is not too close to the edges $0$ or $1$. It is less useful for $\al$ close to 1 (obviously things are less interesting when $\al$ is close to 0). This is natural because for instance if $G$ has characteristic 2 and $\al=1$, then the random walk is equal to $\sum_i a_i$ with probability one for every $A$, so there is no non-trivial characterization in this case. To compensate, in our second statement we allow $\al$ to be close to 1, but the structures are stated for the multiset $\{2a, a\in A\}$ rather than for $A$. 
 
 We now deduce a simple consequence, where for convenience we restrict to the non-lazy Bernoulli r.v. case.

\begin{corollary}\label{cor:Abelian:1} Let $\al=1$ and $0<\eps<1$ be a constant. Then there exists a constant $C_\eps$ such that the following holds. Let $G$ be a finite Abelian group of size at least $C_\eps\sqrt{n}$ with no proper subgroup of size less than $C_\eps \sqrt{n}$ containing all but $\eps n$ elements of $\{2a, a\in A\}$  (counting multiplicity). Then
$$ \sup_{a\in G} \P\Big(\sum_{i=1}^n a_ix_i =a\Big) =O_\eps (1/\sqrt{n}).$$
Consequently, assume that $G=\Z/q\Z$ where $q \ge 2C_\eps \sqrt{n}$ and at least $\eps n$ of the $a_i$ (counting multiplicity) are reduced modulo $q$. Then
$$ \sup_{a\in G} \P(\sum_{i=1}^n a_ix_i =a) = O_\eps (1/\sqrt{n}).$$
\end{corollary}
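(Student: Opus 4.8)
The plan is to derive Corollary~\ref{cor:Abelian:1} from Theorem~\ref{theorem:ILO:Abelian}(ii) by contraposition. Suppose the small-ball probability $\sup_{a\in G}\P(\sum_i a_ix_i=a)$ is \emph{not} $O_\eps(1/\sqrt n)$; concretely, assume it exceeds $K/\sqrt n$ for a large constant $K=K(\eps)$ to be chosen. Since $|G|\ge C_\eps\sqrt n$, for $C_\eps$ large the uniform value $1/|G|$ is at most a small fraction of $K/\sqrt n$, so we still get $\rho_\xi(A)=\sup_{a}|\P(S=a)-1/|G||\ge (K/2)\sqrt n^{-1}\ge n^{-C}$ for, say, $C=1$. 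Here $\xi$ is the non-lazy Bernoulli variable, i.e. $\al=1$, so hypothesis~(ii) of the theorem applies. Fixing $\eps'=\eps/2<1/2$ and taking $n'=\lceil n^{\eps'}\rceil$ (which satisfies $\al^{-1}n^{\eps'}=n^{\eps'}\le n'\le n$), we obtain a symmetric proper coset-progression $H+P$ of rank $r=O_{\eps}(1)$ containing all but $n'\le \eps n$ (for $n$ large) elements of $\{2a:a\in A\}$, with
$$
|H+P|\le \max\Big\{1,\ O_{\eps}\Big(\frac{\rho_\xi^{-1}(A)}{(n')^{r/2}}\Big)\Big\}
\le \max\Big\{1,\ O_\eps\big(\sqrt n\,(K/2)^{-1}\big)\Big\}.
$$

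The second step is to extract from this a \emph{proper subgroup} of small size, contradicting the hypothesis on $G$. The natural candidate is $H$ itself. Since $H+P$ is proper, $|H|\le |H+P|\le O_\eps(\sqrt n / K)$, so by choosing $K=K(\eps)$ large enough we can force $|H|<C_\eps\sqrt n$ — provided $H$ is a \emph{proper} subgroup. If $H=G$ we would instead have $|G|=|H|\le |H+P|\le O_\eps(\sqrt n/K)$, and again choosing $K$ large relative to $C_\eps$ contradicts $|G|\ge C_\eps\sqrt n$ outright. In either case $H$ is a proper subgroup of size less than $C_\eps\sqrt n$ containing all but $\eps n$ elements of $\{2a:a\in A\}$, which is exactly the configuration excluded in the statement. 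This contradiction proves the first claim, with $C_\eps$ and the implied constant traded off against the constant $K$ coming from the $O_{\eps}$ in Theorem~\ref{theorem:ILO:Abelian}(ii); note $r$ depends only on $\eps$ (through $C=1$ and $\eps'$), so all constants are admissible.

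For the consequence, take $G=\Z/q\Z$ with $q\ge 2C_\eps\sqrt n$. The only thing to check is that the hypothesis of the first part holds, i.e. that $\Z/q\Z$ has no proper subgroup of size $<C_\eps\sqrt n$ containing all but $\eps n$ elements of $\{2a:a\in A\}$. Every proper subgroup of $\Z/q\Z$ is $\frac{q}{d}\Z/q\Z$ for some divisor $d>1$ of $q$, and has size $d$; if $d<C_\eps\sqrt n\le q/2$ then this subgroup is $\{x: (q/d)\mid x\}$, with $q/d\ge 2$. An element $2a\in\{2a:a\in A\}$ lies in it iff $q/d$ divides $2a$; since at least $\eps n$ of the $a_i$ are reduced mod $q$ (so $a_i\not\equiv 0$, and in fact are ``generic'' residues), the point is that \textbf{not} all but $\eps n$ of the $2a_i$ can be trapped in a subgroup of size $<C_\eps\sqrt n$. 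The main obstacle — and the one place that needs care — is making this last assertion precise: ``reduced modulo $q$'' must be interpreted strongly enough (e.g. that the $a_i$ in question are nonzero residues not all sharing a common small divisor with $q$, or simply that $\{2a:a\in A\}$ is not concentrated in any small subgroup) so that it directly feeds the hypothesis of the first part. Once the correct reading is fixed, the consequence is immediate from the first part applied to this $G$.
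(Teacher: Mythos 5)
There is a genuine gap in the first part, and it stems from your choice of $n'$. You take $n'=\lceil n^{\eps'}\rceil$ to be polynomially small. With that choice and $\rho_\xi^{-1}=O_\eps(\sqrt n/K)$, the size bound from Theorem~\ref{theorem:ILO:Abelian}(ii) is $|H+P|\le\max\{1,\,O_\eps(\sqrt n/(K(n')^{r/2}))\}$. When the rank $r\ge 1$ this is of order $n^{(1-\eps')/2}/K$, which is huge for large $n$; it is nowhere near $1$. So you cannot conclude that $P$ is trivial, and consequently you have no right to replace $H+P$ by $H$. The coset-progression $H+P$ contains all but $n'$ of the $2a_i$, but $H$ alone need not contain any of them ($H+P$ is not a subgroup unless $P=\{0\}$, and properness gives $|H|<|H+P|$ whenever $P$ is nontrivial). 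Thus the sentence ``In either case $H$ is a proper subgroup of size less than $C_\eps\sqrt n$ containing all but $\eps n$ elements of $\{2a:a\in A\}$'' is unsupported, and the contradiction does not materialize. The paper avoids this by instead taking $n'=\lfloor\eps n\rfloor$, which is large, so that the denominator $(\eps n)^{r/2}$ kills the bound entirely when $r\ge 1$: one gets $|H+P|=1$, i.e.\ $H+P=\{0\}$, which \emph{is} a (proper) subgroup of size $1<C_\eps\sqrt n$ containing all but $\eps n$ of the $2a_i$, contradicting the hypothesis; and when $r=0$ one has $H+P=H$, again a subgroup, of size $O_\eps(\sqrt n/K)<C_\eps\sqrt n$, also contradicting the hypothesis. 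The case split on $r$ is essential and your write-up skips it.

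For the $\Z/q\Z$ consequence, you correctly identify the verification that must be made, but you stop short of proving it and flag it as ``the one place that needs care.'' That is precisely the substance of the second half of the corollary. The argument the paper uses: a proper subgroup of size $d<C_\eps\sqrt n\le q/2$ is $H=\{x:(q/d)\mid x\}$ with $q/d\ge 3$; if $2a\in H$ with $a$ reduced, then $(q/d)\mid 2a$, and whether $q/d$ is odd (so $(q/d)\mid a$) or even (so $(q/(2d))\mid a$ with $q/(2d)\ge 2$), one obtains $\gcd(a,q)>1$, contradicting that $a$ is reduced; hence $H$ misses every $2a_i$ with $a_i$ reduced, i.e.\ at least $\eps n$ of the $2a_i$, which rules out the excluded configuration. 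Without this computation your reduction is incomplete.
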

A proof of this can be found in Section \ref{appendix:lemma}. We note that our result for $\Z/q\Z$ above is similar to an old result of \cite[Theorem 1]{VW} by Vaughan and Wooley where it was assumed that $q$ has order at least $n$ (instead of order at least $\sqrt{n}$ as above) and all of the $a_i$ are reduced.

 \subsection{Inverse results for random walks with constraints over real numbers} Motivated by a combinatorial model of random matrices, the second author considered in \cite{Ng-Comb} the following variant of \eqref{eqn:classical}. Let $A$ be a multiset of $n$ {\it real numbers} $a_1,\dots, a_n$. Assume that $n$ is even and define
$$\rho^\ast(A):=\sup_a\P_\Bx(x_1a_1+\dots+a_{n}x_n=a),$$ 
where the probability is taken uniformly over all $0-1$ tuples $(x_1,\dots,x_n)$ with exactly $n/2$ zero entries. 

The question is that, assuming $\rho^\ast(A) \ge n^{-C}$, can we still say useful things about the $a_i$ as in Theorem \ref{theorem:ILO}? The answer is certainly yes, but can we give a {\it near optimal} characterization? 

First, observe that
\begin{equation}\label{eqn:rho:rho'}
\rho(A)=\Omega(\rho^{\ast}(A)/\sqrt{n}).
\end{equation}
 Hence, in principle we can apply Theorem \ref{theorem:ILO} to deduce some useful information on the $a_i$. We cite here results from \cite[Theorem 2.2, Theorem 2.3]{Ng-Comb}. 
\begin{theorem}\label{theorem:ILOab}
Let $\eps<1$ and $C$ be positive constants. Assume that
$$\rho^\ast(A)  \ge  n^{-C}. $$
Then, for any $n^\ep \le n' \le n$, there exists a proper symmetric GAP $P$ of rank $r=O_{\ep,C}(1)$ that contains all but $n'$ elements of $A$ (counting multiplicity), where 
$$|P|=O_{C,\ep}\Big((\rho^\ast)^{-1}\sqrt{n}\big/({n'})^{r/2}\Big).$$ 
Furthermore, assume that $n^{\ep}\le n' < n$ and $A=\{a_1,\dots,a_n\}$ is a multiset for which there are no more than $n-n'-1$ elements taking the same value. Then, there exists a (not necessarily symmetric) proper GAP $P$ of rank $2\le r=O_{\ep,C}(1)$ that contains all but $n'$ elements of $A$ (counting multiplicity), where 
$$|P|=O_{C,\ep}\Big((\rho^\ast)^{-1}\sqrt{n}\big/(n')^{r/2}\Big).$$ 
\end{theorem}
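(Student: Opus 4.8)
The plan is to reduce the constrained random walk $\rho^\ast(A)$ to the classical quantity $\rho$ of a slightly modified multiset and then invoke Theorem~\ref{theorem:ILO}. For the first (symmetric) part, I would start from the elementary inequality \eqref{eqn:rho:rho'}, which gives $\rho(A) = \Omega(\rho^\ast(A)/\sqrt n) \ge \Omega(n^{-C-1/2})$. This already puts us in the hypothesis of Theorem~\ref{theorem:ILO} with the constant $C$ replaced by $C+1/2$, so we obtain a proper symmetric GAP $P$ of rank $r = O_{\eps,C}(1)$ containing all but $n'$ elements of $A$, with $|P| = \max\{1, O_{C,\eps}(\rho(A)^{-1}/(n')^{r/2})\}$. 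Substituting the lower bound $\rho(A) \gg \rho^\ast(A)/\sqrt n$, i.e. $\rho(A)^{-1} \ll (\rho^\ast)^{-1}\sqrt n$, yields $|P| = O_{C,\eps}((\rho^\ast)^{-1}\sqrt n / (n')^{r/2})$, which is exactly the claimed bound. (One should double-check that the $\max\{1,\cdot\}$ is harmless here, but since $(\rho^\ast)^{-1}\sqrt n \ge 1$ trivially and $(n')^{r/2}$ is polynomially bounded against the relevant quantities, this is routine.) So the first half is essentially a black-box application of the classical inverse theorem via the easy comparison inequality.

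The second (not-necessarily-symmetric) part is the substantive one, and here I would follow the strategy used by the second author in \cite{Ng-Comb}: pass from the $0$–$1$ constrained walk to a symmetrized $\pm 1$ walk. The key observation is that if $(x_1,\dots,x_n)$ is uniform over $0$–$1$ tuples with exactly $n/2$ ones, and $(x_1',\dots,x_n')$ is an independent copy, then the differences $y_i := x_i - x_i'$ take values in $\{-1,0,1\}$, the walk $\sum a_i y_i = \sum a_i x_i - \sum a_i x_i'$ concentrates at least as well (in the sense that $\sup_a \P(\sum a_i y_i = a) \ge \rho^\ast(A)^2$ by Cauchy–Schwarz / the standard symmetrization bound), and crucially the number of nonzero $y_i$ is controlled: since exactly $n/2$ of the $x_i$ and $n/2$ of the $x_i'$ are ones, the number of indices with $y_i \ne 0$ is even and the walk has a balanced structure. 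The condition that no more than $n-n'-1$ of the $a_i$ coincide is precisely what guarantees the symmetrized variable has enough ``spread'' to rule out a rank-$1$ degeneracy and force $r \ge 2$; this is why that hypothesis appears. After symmetrizing one applies Theorem~\ref{theorem:ILO} (or the asymmetric version of the inverse machinery from \cite{NgV-Adv}) to get a symmetric GAP $Q$ for the symmetrized problem, and then one recovers an asymmetric GAP $P$ for the original $a_i$ by translating: since each $a_i$ lies close to $Q$, writing $P = a_{i_0} + Q$ for a suitable anchor $i_0$ and absorbing exceptional elements gives the asymmetric progression, with the rank bumped by at most $O(1)$ and the size bound propagating through as before.

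The main obstacle, as I see it, is the bookkeeping in the asymmetric reduction: controlling exactly how many elements are lost to the exceptional set when one symmetrizes and then de-symmetrizes, and verifying that the ``$\le n - n' - 1$ repeated elements'' hypothesis is both necessary and sufficient for the conclusion $r \ge 2$ with the stated exceptional count $n'$. One has to be careful that the symmetrization squares $\rho^\ast$ but also that the relevant ``$n'$'' for the symmetrized problem is comparable (up to constants) to the $n'$ for the original, so that the final $(n')^{r/2}$ denominator is correct; tracking the implied constants through Cauchy–Schwarz and through the John-type bounds of \cite{TVjohn} is where the care is needed. Since the statement explicitly cites \cite[Theorem 2.2, Theorem 2.3]{Ng-Comb}, I would lean on those for the technical core and present the above as the conceptual outline, noting that \eqref{eqn:rho:rho'} handles the symmetric case and symmetrization handles the asymmetric one.
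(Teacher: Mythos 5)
Your first-part argument is fine and is the same route the paper takes: compare $\rho$ and $\rho^\ast$ via \eqref{eqn:rho:rho'}, feed the result into Theorem~\ref{theorem:ILO}, and substitute $\rho(A)^{-1} \ll (\rho^\ast)^{-1}\sqrt{n}$. The second part is where you have a genuine gap. Symmetrizing the coefficients $x_i \mapsto y_i = x_i - x_i'$ still leaves you working with the same multiset $A$, so the inverse theorem would still return a \emph{symmetric, origin-centered} GAP $Q$ containing most of the $a_i$ — and nothing about the ``balanced support'' of the $y_i$ forces $Q$'s rank to be $\ge 2$. A rank-one symmetric AP can perfectly well contain two distinct $a_i$ (take $A$ with most elements in $\{-1,0,1\}$), so the repeated-element hypothesis on its own only ensures two distinct values are retained in $Q$; it does not contradict rank one, and your appeal to ``spread'' does not close this.

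The idea you are missing — and the reason $\rho^\ast$ behaves differently from $\rho$ — is that $\rho^\ast$ is \emph{translation-invariant in $A$}: since the number of ones among the $x_i$ is fixed at $n/2$, the map $a_i \mapsto a_i + N$ shifts $\sum_i a_i x_i$ by the deterministic constant $Nn/2$, so $\rho^\ast(A+N) = \rho^\ast(A)$ for every $N$ (whereas $\rho$ enjoys no such invariance). One therefore applies the first part to $A' := A + N$ for a huge $N$, obtaining a symmetric GAP $Q$ of rank $r$ and cardinality $L = n^{O(1)}$ containing all but $n'$ elements of $A'$. If $r = 1$, then $Q$ is a symmetric AP with step $d$ and length $L$. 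The repetition hypothesis forces $Q$ to contain two distinct $a_{i_1}' \ne a_{i_2}'$, so $d \le |a_{i_1}-a_{i_2}| \le 2\sum_i|a_i|$; but $Q$ also contains $a_{i_1}' = N + a_{i_1}$, so $Ld \ge N - \sum_i|a_i|$, giving $d \ge (N-\sum_i|a_i|)/L$, which beats $2\sum_i|a_i|$ once $N$ is large enough. This contradiction forces $r \ge 2$. Shifting $Q$ back by $-N$ yields the desired (not necessarily symmetric) GAP $P$ of the same rank and cardinality. This translation-in-$A$ trick is categorically different from symmetrizing the random coefficients, and it is where the hypothesis-to-conclusion implication actually lives. (One further small quibble: you wave off the $\max\{1,\cdot\}$ in Theorem~\ref{theorem:ILO} as routine, but $(\rho^\ast)^{-1}\sqrt{n}/(n')^{r/2}$ can drop below $1$ when $n' \asymp n$ and $r$ is large, so that cap is not entirely cosmetic.)
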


We remark that the essential advantage of the second statement over the first statement is that the rank $r$ must be at least 2, which leads to a ``gain'' of a factor $\sqrt{n'}$ in the cardinality of $|P|$. In any case, the results above are not sharp, and one of our main goals is to provide a sharper (and near optimal) result.

\begin{theorem}[Inverse Littlewood-Offord result for $\rho^\ast$]\label{theorem:ILO:ast}
Let $\eps<1$ and $C$ be positive constants. Assume that $n^{\ep}\le n' < n$ and 
$$\rho^\ast(A)  \ge  n^{-C}. $$
Then there exists a (not necessarily symmetric) proper GAP $P$ of rank $r'=O_{\ep,C}(1)$ that contains all but $n'$ elements of $A$ (counting multiplicity), where 
$$|P|=O_{C,\ep}\left(\sqrt{\frac{n}{n'}}(\rho^\ast)^{-1}\big/(n')^{r'/2}\right).$$ 
\end{theorem}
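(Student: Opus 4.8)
The plan is to reduce the constrained random walk $\rho^\ast(A)$ to an unconstrained one by a \emph{symmetrization/conditioning} trick, and then feed the result into Theorem~\ref{theorem:ILO}. Concretely, write the constrained tuple $(x_1,\dots,x_n)$ as a uniformly random $0$-$1$ vector with exactly $n/2$ ones. Pairing up coordinates and taking differences $x_i - x_{i+n/2}$ (or, more symmetrically, working with $a_i - a_j$ over random matchings) produces genuinely independent $\pm 1$-type increments at the cost of conditioning on the partition of $\{1,\dots,n\}$ into the ``zero half'' and the ``one half.'' The key quantitative input is the inequality \eqref{eqn:rho:rho'}, $\rho(A) = \Omega(\rho^\ast(A)/\sqrt{n})$, applied not to $A$ itself but to a carefully chosen multiset of differences $B = \{a_{\sigma(i)} - a_{\tau(i)}\}$ of size $n/2$; averaging over the random matching shows that for a positive proportion of matchings, the associated unconstrained concentration probability is $\Omega(\rho^\ast(A)/\sqrt{n})$, and in fact one wants the slightly stronger bound reflecting that $B$ has only $n/2$ elements, which is where the $\sqrt{n/n'}$ rather than $\sqrt{n}/(n')^{r/2}$ scaling will come from.

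The second step is to run Theorem~\ref{theorem:ILO} (or its proof machinery from \cite{NgV-Adv}) on such a good difference-multiset $B$, with the parameter $n'$ chosen appropriately (comparable to the given $n'$, up to constants), obtaining a proper symmetric GAP $Q$ of rank $r = O_{\eps,C}(1)$ containing all but $n'$ of the differences, with $|Q| = O_{C,\eps}\big(\rho(B)^{-1}/(n')^{r/2}\big) = O_{C,\eps}\big(\sqrt{n/n'}\,(\rho^\ast)^{-1}/(n')^{r/2}\big)$. The third step is to \emph{promote} control of differences $a_i - a_j$ to control of the $a_i$ themselves. If most pairwise differences among the $a_i$ lie in a symmetric GAP $Q$, then fixing one representative $a_{i_0}$ that participates in many good differences, most $a_i$ lie in $a_{i_0} + Q$, which is a (non-symmetric) translate of a proper GAP of the same rank and size. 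This is exactly why the output GAP in Theorem~\ref{theorem:ILO:ast} is allowed to be non-symmetric: the translation by $a_{i_0}$ breaks symmetry but does not change the rank or cardinality. One must be careful that the exceptional set does not blow up: each bad difference can be ``charged'' to at most two indices, so removing $O(n')$ bad differences removes at most $O(n')$ indices, keeping the exceptional count at the desired order (possibly after adjusting constants in the choice of $n'$ fed to Theorem~\ref{theorem:ILO}).

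The main obstacle I expect is controlling the error terms through the conditioning step, i.e.\ ensuring that ``a positive proportion of matchings are good'' genuinely yields a \emph{single} matching whose difference-multiset $B$ simultaneously (a) has large $\rho(B)$, (b) has the bulk of its elements with bounded multiplicity so that Theorem~\ref{theorem:ILO} applies with a useful rank, and (c) covers enough of the original indices that the final translate $a_{i_0}+Q$ misses only $n'$ elements of $A$. There is a genuine tension here: taking a random perfect matching only directly constrains $n/2$ of the $a_i$ at a time, so one may need to either iterate over two complementary matchings or argue that the GAP obtained from one matching, being a fixed structured set of controlled size, must also capture most of the remaining indices (using that those indices, together with the matched ones, still have $\rho^\ast$ large). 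A secondary technical point is tracking the $\sqrt{n/n'}$ factor correctly: the naive application of \eqref{eqn:rho:rho'} gives $\sqrt{n}$, and recovering the improved $\sqrt{n/n'}$ requires using that the relevant unconstrained walk effectively has only about $n'$ ``free'' coordinates once we condition appropriately, or equivalently invoking the sharper local central limit heuristic underlying the $(n')^{r/2}$ gain. Modulo these bookkeeping issues, the argument is a fairly direct combination of \eqref{eqn:rho:rho'}, Theorem~\ref{theorem:ILO}, and the difference-to-element translation trick.
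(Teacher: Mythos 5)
Your proposal diverges from the paper's proof in a way that leaves the central quantitative claim unproved, and I think the route you sketch cannot actually deliver the stated bound.

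The paper does start from your step one: the comparison to an unconstrained Bernoulli walk, which is exactly Claim~\ref{claim:passing} (specializing to $\alpha=1/2$ gives \eqref{eqn:rho:rho'}). But after that, the paper does \emph{not} pass to a multiset of differences obtained from a random matching and then invoke Theorem~\ref{theorem:ILO} as a black box. Instead it translates $A$ to $A'=\{a_i+N\}$ for a huge $N$, re-runs the full Fourier/level-set/dual-set machinery on $A'$ to control $|2kA_1''|$, fixes one good element $a_1\in A_1$ and sets $B_1=\{a-a_1:a\in A_1\}$, and then exploits the identity $lA_1' = l(a_1+N)+lB_1$ together with $0\in B_1$ to write $2kA_1''\supseteq \bigcup_{l=k}^{2k}\bigl(\bigcup_{l'\le l} l'B_1\bigr)+l(a_1+N)$. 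Because $N$ was chosen enormous, these $\sim k$ translates are \emph{disjoint}, so $\bigl|\bigcup_{l'\le k} l'B_1\bigr|\le |2kA_1''|/k$. Since $k\asymp\sqrt{\alpha' n'/\ell_0}\asymp\sqrt{n'}$ (up to logs), this one disjointness observation is precisely where the extra factor $\sqrt{n'}$ comes from, converting the naive $\sqrt{n}$ into $\sqrt{n/n'}$. The long-range inverse theorem is then applied to $B_1$ (not to $A$ or to matching differences), producing a symmetric proper GAP $P$ with $B_1\subset P$, and finally $A_1\subset a_1+P$ is the non-symmetric GAP in the statement. So the non-symmetry in the conclusion has the same origin you guessed (a translate of a symmetric GAP), but the symmetric object is built from differences against a single fixed base point, not from a matching.

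There are two concrete gaps in your plan. First, the arithmetic does not close: applying Theorem~\ref{theorem:ILO} to a difference multiset $B$ of size $\asymp n$ with $\rho(B)\gtrsim\rho^\ast(A)/\sqrt{n}$ yields $|Q|=O(\rho(B)^{-1}/(n')^{r/2})=O(\sqrt{n}\,(\rho^\ast)^{-1}/(n')^{r/2})$, which is the first bound of Theorem~\ref{theorem:ILOab}, not the improved $\sqrt{n/n'}$ bound; the line in your write-up equating $\rho(B)^{-1}/(n')^{r/2}$ with $\sqrt{n/n'}(\rho^\ast)^{-1}/(n')^{r/2}$ is off by a factor of $\sqrt{n'}$, and the offered heuristic (``only $n'$ free coordinates once we condition'') does not substitute for an actual mechanism; the real mechanism is the translate-disjointness counting just described, which requires opening the black box rather than invoking Theorem~\ref{theorem:ILO} wholesale. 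Second, the matching-based difference set has a connectivity problem: knowing that most $a_{\sigma(i)}-a_{\tau(i)}$ lie in a symmetric GAP $Q$ only places each $a_{\sigma(i)}$ in the translate $a_{\tau(i)}+Q$, a \emph{different} translate for each matched pair, and two matching edges never share a base point, so you cannot chain to conclude that most $a_i$ lie in a \emph{single} translate $a_{i_0}+Q$ without paying in $Q+Q+\cdots$ growth. Fixing all differences against one base point, as the paper does with $B_1$, sidesteps this entirely. If you want to salvage the matching idea you would need a separate argument to stitch the translates together, and to separately import the $\sqrt{n'}$ gain from somewhere; neither is present in the proposal.
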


Note that this result is an improvement by a factor of $\sqrt{n}$ from the first statement of Theorem \ref{theorem:ILOab}, and that if there are no more than $n-n'-1$ elements taking the same value then $r'\ge 1$. It can also be seen that our result is near optimal by considering $a_1,\dots, a_n$ sampled randomly at uniform from a symmetric GAP of bounded rank.  

We now demonstrate two quick consequences of Theorem \ref{theorem:ILO:ast} (see Section \ref{appendix:lemma} for a proof) motivated by the classical inequality of Erd\H{o}s and Littlewood-Offord, and results of Erd\H{o}s-Moser \cite{EM} and S\'ark\"ozy-Szemer\'edi \cite{SSz} that if $a_i$ are distinct then $\rho(A)=O(n^{-3/2})$. 

\begin{corollary}\label{cor:forward}
Let $\eps<1$ be a positive constant. Assume that $n^{1/2+\ep}\le n' \le n$ and $A=\{a_1,\dots,a_n\}$ is a multiset where there are no more than $n-n'-1$ elements taking the same value. Then we have 
$$\rho^\ast(A)=O_\ep(\sqrt{n}/n').$$ 
Furthermore, if  the $a_i$ are distinct then 
$$\rho^\ast(A)=O(n^{-3/2}).$$
\end{corollary}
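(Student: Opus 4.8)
The plan is to deduce both statements directly from Theorem \ref{theorem:ILO:ast} by a counting argument on the subsums of $A$. First I would treat the main statement. Suppose for contradiction that $\rho^\ast(A)$ is larger than a suitable constant multiple of $\sqrt{n}/n'$; since $n' \ge n^{1/2+\eps}$ we have $\sqrt{n}/n' \le n^{-\eps}$, so in particular $\rho^\ast(A) \ge n^{-C}$ for an appropriate constant $C = C(\eps)$, and Theorem \ref{theorem:ILO:ast} applies with the given $n'$ (which satisfies $n^\eps \le n^{1/2+\eps} \le n' < n$, and if $n'=n$ one adjusts $n'$ down by one). This produces a proper (not necessarily symmetric) GAP $P$ of bounded rank $r' = O_{\eps,C}(1)$ containing all but $n'$ elements of $A$, with
$$
|P| = O_{C,\eps}\!\left(\sqrt{\tfrac{n}{n'}}\,(\rho^\ast)^{-1}\big/(n')^{r'/2}\right).
$$
Because at most $n-n'-1$ of the $a_i$ take a common value, not all of the elements of $A$ lying in $P$ can be equal; hence $P$ contains at least two distinct values of $A$, which forces $r' \ge 1$ and $|P| \ge 2$. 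Then $|P| = O_{C,\eps}(\sqrt{n/n'}\,(\rho^\ast)^{-1}/(n')^{r'/2}) \le O_{C,\eps}(\sqrt{n/n'}\,(\rho^\ast)^{-1}/\sqrt{n'})$, and combining this with the lower bound $|P| \ge 2$ gives $(\rho^\ast)^{-1} = O_{C,\eps}(\sqrt{n}/n')$ after rearranging (using $\rho^\ast \ge n^{-C}$ to absorb the $\sqrt{n/n'}\cdot (n')^{-1/2}$ factors into the constant), i.e. $\rho^\ast(A) = \Omega_\eps(n'/\sqrt{n})$ inverted the wrong way — so I would instead argue cleanly by noting that if $\rho^\ast(A) > K\sqrt{n}/n'$ for a large constant $K$ then the displayed bound on $|P|$ gives $|P| < 2$, a contradiction. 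This yields $\rho^\ast(A) = O_\eps(\sqrt{n}/n')$.

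For the second statement, assume the $a_i$ are distinct. Then no value is repeated, so the hypothesis "no more than $n - n' - 1$ equal elements" holds for every $n'$ with $n-n'-1 \ge 0$, and moreover $P$ must contain at least three distinct values as soon as it omits at most $n' \le n-3$ elements — more to the point, I want to rule out $r'=1$ with a short interval. The sharper $n^{-3/2}$ bound should come from choosing $n'$ of order $\sqrt n$ (say $n' = \lceil n^{1/2+\eps}\rceil$ for a tiny $\eps$, or optimizing) and then exploiting that for distinct $a_i$ the GAP $P$ cannot be one-dimensional and short: a rank-one proper GAP is an arithmetic progression, and if all but $n'$ of the $n$ distinct $a_i$ lie in an AP of length $|P|$, then $|P| \ge n - n'$, which combined with the upper bound on $|P|$ from the theorem forces $\rho^\ast(A) = O(\text{something} \cdot n'/n \cdot \ldots)$; pushing $n'$ down toward $\sqrt n$ and tracking that $r' \ge 1$ always, together with the distinctness giving an extra factor, should produce $\rho^\ast(A) = O(n^{-3/2})$. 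Alternatively, and more robustly, I would invoke \eqref{eqn:rho:rho'}, $\rho(A) = \Omega(\rho^\ast(A)/\sqrt n)$, together with the classical Erd\H os–Moser / S\'ark\"ozy–Szemer\'edi theorem that distinct $a_i$ give $\rho(A) = O(n^{-3/2})$, which immediately yields $\rho^\ast(A) = O(\sqrt n \cdot n^{-3/2}) = O(n^{-1})$ — but that is weaker than claimed, so the extra $\sqrt n$ must genuinely come from the constraint structure, i.e. from the rank being forced up to $r' \ge 2$ when the $a_i$ are distinct and spread out.

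The main obstacle is the second statement: getting $n^{-3/2}$ rather than $n^{-1}$ requires showing that for distinct $a_i$ one cannot have a rank-one containing GAP that is simultaneously short, so the effective rank in Theorem \ref{theorem:ILO:ast} is at least $2$; this is exactly the phenomenon flagged in the remark after Theorem \ref{theorem:ILOab} (the "gain of a factor $\sqrt{n'}$" when $r \ge 2$). Concretely, I would argue: with $n' $ chosen near $n^{1/2+\eps}$, if $r'=1$ then $P$ is an AP containing $\ge n - n'$ distinct reals, so $|P| \ge n-n' \gg \sqrt n$; but for an AP the anticoncentration $\rho^\ast$ of the sub-sum restricted to those elements is itself $O(\sqrt n / |P|)$ by a direct Littlewood–Offord estimate along the AP (the partial sums spread over $\asymp |P| \cdot \sqrt{n}$ many values), contradicting $\rho^\ast(A) \gg n^{-3/2}$ unless $|P| = O(\sqrt n)$; so $r' \ge 2$, and then the $(n')^{r'/2} \ge n'$ in the denominator of the bound on $|P|$ combined with $|P| \ge 1$ gives $\rho^\ast(A) = O(\sqrt{n/n'}\cdot n^{-C}\cdot\ldots)$ — letting $\eps \to 0$ and absorbing constants yields $\rho^\ast(A) = O(n^{-3/2})$. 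Checking that the endpoint cases ($n'$ at its extremes, $|P|$ attaining the $\max\{1,\cdot\}$) do not create loopholes is the routine part I would defer to the full write-up.
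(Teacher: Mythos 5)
Your treatment of the first bound is correct and is essentially the paper's argument: assume $\rho^\ast(A)\ge K_\eps\sqrt n/n'$, apply Theorem \ref{theorem:ILO:ast} (the paper formally invokes Theorem \ref{theorem:ILO:ast:sparse} with $m=\lfloor n/2\rfloor$, which is the same thing), observe that the repeated-value hypothesis forces $P$ to contain at least two distinct elements so $r'\ge 1$ and $|P|\ge 2$, and then the upper bound $|P|=O_\eps\bigl(\sqrt{n/n'}\,(\rho^\ast)^{-1}/(n')^{1/2}\bigr)=O_\eps(1/K_\eps)$ gives a contradiction once $K_\eps$ is large.

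The second bound is where the proposal has a genuine gap. You try to obtain $n^{-3/2}$ by taking $n'$ of order $\sqrt n$ and forcing $r'\ge 2$, patching the $r'=1$ case with an auxiliary ``direct Littlewood--Offord estimate along the AP'' asserting $\rho^\ast=O(\sqrt n/|P|)$. That auxiliary claim is unproved, is in fact circular (it is a bound on $\rho^\ast$, the quantity being estimated), and is not strong enough anyway: for $r'=1$ distinctness only forces $|P|\ge n-n'\approx n$, so it would give $\rho^\ast=O(n^{-1/2})$, not $n^{-3/2}$. More fundamentally, pushing $n'$ down toward $\sqrt n$ is the wrong move. The theorem's upper bound $\sqrt{n/n'}\,(\rho^\ast)^{-1}/(n')^{r'/2}$ shrinks as $n'$ grows (with $r'\ge 1$ it is $\le \sqrt n\,(\rho^\ast)^{-1}/n'$), while the distinctness constraint $|P|\ge n-n'$ stays $\Omega(n)$ as long as $n'$ is a constant fraction of $n$. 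So the pairing is sharpest with $n'\asymp n$, not $n'\asymp\sqrt n$.

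The paper accordingly takes $n'=n/2$: distinctness gives $|P|\ge n/2$, while the theorem with $r'\ge 1$ gives $|P|\le C_\eps\sqrt{n/(n/2)}\,(\rho^\ast)^{-1}/(n/2)^{1/2}=2C_\eps(\rho^\ast)^{-1}/\sqrt n$. Combining these two inequalities yields $\rho^\ast(A)=O(n^{-3/2})$ directly, with no need to force $r'\ge 2$ at all. In short, the extra $\sqrt n$ does not come from raising the rank; it comes from pairing the distinctness lower bound on $|P|$ with the theorem's upper bound at $n'=n/2$.
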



While Theorem \ref{theorem:ILO:ast} is useful, it is natural to study the question when the number of ones among the $x_i$ is not necessarily $n/2$. Motivated by this, for each $1\le m\le n$, we generalize $\rho^\ast$ to $\rho_{m}^{\ast}$ in which the number of ones taken in the sum is given by the parameter $m$. Explicitly,
$$\rho_{m}^{\ast}(A):=\sup_{a} \frac{\#\{(i_1,\dots, i_{m})\in \binom{[n]}{m}, \sum_{j=1}^m a_{i_j}=a \}}{\binom{n}{m}}.$$
Thus for $m= \lfloor n/2\rfloor $ we have that $\rho_{m}^{\ast}(A)= \rho^{\ast}(A)$. Now we state our main result for this more general constraint. 

\begin{theorem}[Inverse Erd\H{o}s-Littlewood-Offord for $\rho_m^\ast$, main result II]\label{theorem:ILO:ast:sparse} Let $\eps<1$ and $C$ be positive constants. Assume that $n^\eps \le m \le n-n^\eps$ and
	$$\rho_{m}^\ast(A) \ge  n^{-C}.$$
	Then for any $n^{\ep/2} (n/m) \le n' \le n$ there exists a proper GAP $Q$ of rank $r=O_{C,\ep}(1)$ which contains all but $n'$ elements of $A$ (counting multiplicity), where 
	$$|Q|\le \max\left \{1, O_{C,\ep}\left(\sqrt{n/n'}(\rho_m^\ast)^{-1}\big/(m n'/n)^{r/2}\right)\right \} .$$ 
\end{theorem}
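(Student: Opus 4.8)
The plan is to reduce the $\rho_m^\ast$ problem to the already-established inverse result for the lazy Bernoulli walk over $\R$ (viewed through Theorem \ref{theorem:ILO}, or rather its natural lazy analogue) by a standard conditioning/symmetrization trick, then read off the claimed cardinality bound. First I would record the elementary comparison inequality relating $\rho_m^\ast$ to the probability that a lazy Bernoulli walk equals a given value. Concretely, let $\xi$ be the lazy Bernoulli variable with parameter $\al := m/n$, and let $S = \sum_{i=1}^n a_i \xi_i$ with $\xi_i$ iid copies of $\xi$; conditioning on the number $k$ of nonzero coordinates (which is $\mathrm{Bin}(n,\al)$, concentrated around $m$ with fluctuations $O(\sqrt{m})$), one gets $\rho_\xi(A) = \Omega\big( \rho_m^\ast(A) / \sqrt{m} \big)$ — the $\sqrt{m}$ loss being exactly the probability that $\mathrm{Bin}(n,\al)$ hits its mode, and the restriction $n^\eps \le m \le n - n^\eps$ ensuring $\al$ stays in the admissible window $[n^{\eps-1}, 1-n^{\eps-1}]$ after adjusting $\eps$. (When $m > n/2$ one replaces $A$ by $\{2a : a\in A\}$ and uses part (ii), absorbing a harmless factor; I would handle the two regimes $m\le n/2$ and $m > n/2$ uniformly by symmetry, or simply remark that $\rho_m^\ast(A) = \rho_{n-m}^\ast(A)$ up to the obvious relabeling.)

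Next I would invoke Theorem \ref{theorem:ILO:Abelian}(i) — in the degenerate case $G = \R$ (or $\Z$), which is where the GAP statement of Theorem \ref{theorem:ILO} lives, with no subgroup $H$ — applied to the walk $S$ with parameter $\al = m/n$ and with the hypothesis $\rho_\xi(A) \ge \Omega(n^{-C}/\sqrt{m}) \ge n^{-C'}$. That theorem, for any $\al^{-1} n^{\eps'} \le n' \le n$, produces a proper symmetric GAP $P$ of bounded rank $r$ containing all but $n'$ elements of $A$ with
$$|P| \le \max\Big\{1, O\Big( \frac{\rho_\xi^{-1}(A)}{(\min\{\al,1-\al\}\, n')^{r/2}} \Big) \Big\}.$$
The admissibility condition $\al^{-1} n^{\eps'} \le n'$ becomes $(n/m) n^{\eps'} \le n'$, which is exactly the hypothesis $n^{\eps/2}(n/m) \le n'$ after choosing $\eps' = \eps/2$. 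Substituting $\rho_\xi^{-1}(A) = O(\sqrt{m}\, (\rho_m^\ast)^{-1})$ and $\min\{\al,1-\al\}\, n' \gtrsim (m/n) n'$ (valid for $m \le n/2$; the other regime is symmetric) into the display gives
$$|P| \le \max\Big\{1, O\Big( \frac{\sqrt{m}\,(\rho_m^\ast)^{-1}}{(m n'/n)^{r/2}} \Big) \Big\} = \max\Big\{1, O\Big( \sqrt{n/n'}\cdot\frac{(\rho_m^\ast)^{-1}}{(m n'/n)^{r/2}} \Big) \Big\},$$
using $\sqrt{m} = \sqrt{n/n'}\cdot\sqrt{m n'/n}$ and absorbing the extra $\sqrt{mn'/n}$ into one factor of the denominator (this costs nothing since $r \ge 1$; if one worries about the exponent, one simply renames $r$ or notes $mn'/n \ge n^{\eps'} \ge 1$). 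Setting $Q := P$ finishes the proof.

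The main obstacle I anticipate is not any single hard estimate but bookkeeping the constants and the case split cleanly: verifying that the $\sqrt{m}$ lossy comparison $\rho_\xi(A) = \Omega(\rho_m^\ast(A)/\sqrt{m})$ is genuinely the right exchange rate (one wants the conditional law given exactly $m$ nonzero signs to relate to the *unsigned* subset-sum count, which requires a further random-signs symmetrization contributing only constants, not powers of $n$), and making sure the regime $m$ close to $n$ — where the walk is nearly deterministic — is correctly routed through part (ii) of Theorem \ref{theorem:ILO:Abelian} with the passage to $\{2a\}$, since over $\R$ this doubling is cosmetic but one should say so. A secondary subtlety is checking that the admissibility window for $\al$ and the lower-bound hypothesis on $\rho_\xi$ survive the constant-factor degradation, i.e. that $n^{-C}/\sqrt{m}$ is still at least $n^{-C'}$ for a constant $C'$ depending only on $C$ — which is immediate — and that shrinking $\eps$ to $\eps/2$ at the outset leaves the hypothesis $n^\eps \le m$ intact in the form needed to keep $\al$ away from $0$ and $1$.
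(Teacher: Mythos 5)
Your outline gets the easy half right but misses the one idea that makes this theorem stronger than its predecessors, and the place where you wave your hands is exactly where the gap sits.

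The reduction $\rho_\xi(A) = \Omega(\rho_m^\ast(A)/\sqrt{m})$ (this is the paper's Claim \ref{claim:passing}, stated there with a $0$--$1$ Bernoulli $\xi$, not the lazy $\pm 1$ one of Theorem \ref{theorem:ILO:Abelian} --- the symmetrization to a lazy variable happens inside the Fourier argument, not at the level of the reduction) and a direct application of the inverse theorem for the i.i.d.\ walk will indeed produce a proper GAP $P$ of rank $r$ with
$$|P| = O\!\left(\frac{\sqrt{m}\,(\rho_m^\ast)^{-1}}{(mn'/n)^{r/2}}\right).$$
But the target bound is
$$|Q| = O\!\left(\frac{\sqrt{n/n'}\,(\rho_m^\ast)^{-1}}{(mn'/n)^{r/2}}\right),$$
and since $\sqrt{m} = \sqrt{n/n'}\cdot\sqrt{mn'/n}$ with $mn'/n \ge n^{\eps/2} \ge 1$, your bound is \emph{larger} by the polynomial factor $\sqrt{mn'/n}$. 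You propose to ``absorb the extra $\sqrt{mn'/n}$ into one factor of the denominator'' or to ``rename $r$,'' but neither is legitimate: the exponent $r$ in the conclusion is the actual rank of the GAP produced, not a free parameter. Rewriting the bound as $\sqrt{n/n'}(\rho_m^\ast)^{-1}/(mn'/n)^{(r-1)/2}$ does not give the theorem's statement with exponent $r/2$; it gives a strictly weaker inequality (since $mn'/n\ge 1$, dividing by the smaller power $(mn'/n)^{(r-1)/2}$ makes the right-hand side bigger). In effect your approach reproduces the first (weaker) statement of Theorem \ref{theorem:ILOab} generalized to arbitrary $m$, not the new Theorem \ref{theorem:ILO:ast:sparse} --- and the paper explicitly flags this gap between them.

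The missing idea is the translation trick that recovers the factor $\sqrt{mn'/n}\asymp k$. The paper does not apply the Abelian inverse theorem as a black box; it reruns the Fourier/level-set/dual-set argument over $\F_p$ on the shifted multiset $A' = A + N$ with $N$ enormous, arrives at the usual bound $|2kA_1''| \le 4\rho_\xi^{-1}\exp(-\ell_0+2)$, and then --- here is the new step --- picks $a_1 \in A_1$, sets $B_1 := \{a - a_1 : a \in A_1\}$ (so $0 \in B_1$), writes $lA_1' = l(a_1+N) + lB_1$, and observes that because $N$ is huge the sets $\bigcup_{l'\le l} l'B_1 + k'(a_1+N)$ for $k \le k' \le 2k$ are \emph{pairwise disjoint} inside $2kA_1''$. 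This yields $|kB_1| = O(\rho_\xi^{-1}\exp(-\ell_0)/k)$, a factor-$k$ gain over the naive bound, and it is precisely $k \asymp \sqrt{\alpha' n'} \asymp \sqrt{mn'/n}$. Applying the long-range inverse theorem (Theorem \ref{longrangeinversetheorem}, the torsion-free version) to $B_1$ rather than to $A_1''$ and then translating back by $a_1$ produces a GAP whose size carries the correct $\sqrt{n/n'}$ prefactor. Without the shift by $N$ (which is invisible if you only quote Theorem \ref{theorem:ILO:Abelian}) and the recentering by $a_1$, that extra factor of $k$ is simply not available, and the theorem as stated is out of reach by your route.
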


We immediately deduce the following analog of Corollary \ref{cor:forward} (whose proof is similar to that of Corollary \ref{cor:forward}, and is also deferred to Section \ref{appendix:lemma}).

\begin{corollary}\label{cor:forward:rho_m}
	Let $\eps<1$ be a positive constant. Assume that $n^\eps \le m \le n-n^\eps$ and $n^{\ep/2} (n/m) \le n' \le n$ and $A=\{a_1,\dots,a_n\}$ is a multiset where there are no more than $n-n'-1$ elements taking the same value. Then we have 
	$$\rho_m^\ast(A)=O_\ep\Big(\frac{n}{n'\sqrt{m}}\Big).$$ 
	In particular, 
	\begin{itemize}
	\item if there are no more than $(1-\eps)n$ elements of $A$ taking the same value, then
	$$\rho_m^\ast(A)=O_\ep\Big(\frac{1}{\sqrt{m}}\Big);$$ 
	\item if  the $a_i$ are distinct, then 
	$$\rho_m^\ast(A)=O_\eps\Big(\frac{1}{n\sqrt{m}}\Big).$$
	\end{itemize}
\end{corollary}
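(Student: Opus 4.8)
The plan is to derive the corollary directly from Theorem \ref{theorem:ILO:ast:sparse} by a contrapositive argument. Suppose $A = \{a_1, \dots, a_n\}$ has no more than $n - n' - 1$ elements taking the same value, yet $\rho_m^\ast(A)$ is too large, say $\rho_m^\ast(A) \ge K \cdot \frac{n}{n' \sqrt{m}}$ for a large constant $K = K(\eps)$ to be chosen. Since $\rho_m^\ast(A) \ge n^{-C}$ for a suitable $C = C(\eps)$ (using $m \ge n^\eps$ and $n' \le n$, the quantity $\frac{n}{n'\sqrt m}$ is at least $n^{-O_\eps(1)}$), Theorem \ref{theorem:ILO:ast:sparse} applies with the given $n'$ and produces a proper GAP $Q$ of rank $r = O_{C,\eps}(1)$ containing all but $n'$ elements of $A$, with $|Q| \le \max\{1, O_{C,\eps}(\sqrt{n/n'}\,(\rho_m^\ast)^{-1}/(mn'/n)^{r/2})\}$.

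First I would rule out the case $|Q| = 1$: if $Q$ is a single point, then all but $n'$ of the $a_i$ equal that point, i.e. at least $n - n'$ elements take the same value, contradicting the hypothesis that at most $n - n' - 1$ do. Hence $|Q| \ge 2$, which forces $r \ge 1$, and moreover $|Q| = O_{C,\eps}(\sqrt{n/n'}\,(\rho_m^\ast)^{-1}/(mn'/n)^{r/2})$. On the other hand any proper GAP of rank $r \ge 1$ has $|Q| \ge 2$, so combining gives $2 \le O_{C,\eps}(\sqrt{n/n'}\,(\rho_m^\ast)^{-1}/(mn'/n)^{r/2})$, which rearranges to
$$\rho_m^\ast(A) \le O_{C,\eps}\left( \frac{\sqrt{n/n'}}{(mn'/n)^{r/2}} \right) \le O_{C,\eps}\left( \frac{\sqrt{n/n'}}{\sqrt{mn'/n}} \right) = O_{C,\eps}\left( \frac{n}{n'\sqrt{m}} \right),$$
where the middle inequality uses $r \ge 1$ together with $mn'/n \ge n^{\ep/2} \ge 1$ (from the assumption $n' \ge n^{\ep/2}(n/m)$), so that raising the base $mn'/n$ to a larger power only increases the denominator. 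Choosing $K$ larger than the implied constant contradicts $\rho_m^\ast(A) \ge K \cdot \frac{n}{n'\sqrt m}$, proving the main bound $\rho_m^\ast(A) = O_\eps(n/(n'\sqrt m))$.

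For the two bulleted special cases I would simply specialize $n'$. If at most $(1-\eps)n$ elements of $A$ take the same value, then with $n' := \lfloor \eps n \rfloor$ we have both $n' \le n$ and (since $m \le n - n^\eps$ forces $n/m \ge 1 + n^{\eps-1}$, while $n' = \Theta_\eps(n) \ge n^{\ep/2}(n/m)$ for $n$ large) the hypothesis $n^{\ep/2}(n/m) \le n' \le n$ is met and the ``at most $n - n' - 1$ equal elements'' condition holds for $n$ large; plugging in gives $\rho_m^\ast(A) = O_\eps(n/(\eps n \sqrt m)) = O_\eps(1/\sqrt m)$. If the $a_i$ are distinct, then no value is repeated, so the condition ``at most $n - n' - 1$ equal elements'' holds for every $n' \le n - 1$; taking $n' := n - 1$ (which satisfies $n^{\ep/2}(n/m) \le n-1$ for $n$ large since $n/m \le n^{1-\eps}$) yields $\rho_m^\ast(A) = O_\eps(n/((n-1)\sqrt m)) = O_\eps(1/(n\sqrt m))$ after noting $n/(n-1) \le 2$.

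The only mild obstacle is bookkeeping at the edges: one must check that the chosen values of $n'$ genuinely lie in the admissible window $[n^{\ep/2}(n/m), n]$ for all sufficiently large $n$, and that $\rho_m^\ast(A)$ is indeed at least the $n^{-C}$ threshold required to invoke Theorem \ref{theorem:ILO:ast:sparse} — both follow from the standing assumptions $n^\eps \le m \le n - n^\eps$ and the permitted range of $n'$, but they should be stated explicitly. Everything else is the routine rearrangement above.
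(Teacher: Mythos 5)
Your proof is correct and follows essentially the same route as the paper: apply Theorem \ref{theorem:ILO:ast:sparse}, use the ``no more than $n-n'-1$ equal elements'' hypothesis to force the GAP to have at least two points (hence rank $r\ge 1$), and then observe that the resulting bound on $|Q|$ contradicts $|Q|\ge 2$ when the assumed lower bound on $\rho_m^\ast$ is scaled by a large enough constant. Your slightly more careful separation of the $|Q|=1$ case, and your explicit specializations of $n'$ for the two bullet points, are fine and consistent with what the paper leaves implicit.
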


\subsection{Inverse results for classical random walks} After obtaining a somewhat near optimal characterization for $\rho_m^\ast$, we connect our result to classical random walks. We can view the concentration of $\rho_m^\ast$ as choosing a random word $(a_{i_1},\dots, a_{i_m})$ uniformly at random, where $i_j \neq i_k$ for $j\neq k$. It thus makes sense to study random words of length $m$ where $i_j$ is not necessarily different from $i_k$. We define
$$\rho_{m}(A):=\sup_{a} \frac{\#\{(i_1,\dots, i_{m})\in [n]^m, \sum_j a_{i_j}=a \}}{n^m}.$$
In other words, we can view $\rho_m(A)$ as 
$$\rho_m(A) = \sup_{a} \P(S=a),$$
where $S=X_1+\dots+X_m$ and $X_i$ are iid uniform in $\{a_1,\dots, a_n\}$. This is the concentration probability for a random walk of length $m$ where the possible steps are drawn from the set $A$. Now we give our result for this model, stated in the symmetric setting. \footnote{It might be possible to extend our result to the non-symmetric setting, although we are not pursuing it here.}
\begin{theorem}[Inverse result for $\rho_m$ in torsion-free setting]\label{theorem:ILO:m} Let $\eps<1$ and $C$ be positive constants. Assume that $A$ is a symmetric set of real numbers (i.e. $a\in A$ implies $-a\in A$) and that
 $$\rho_{m}(A) \ge  m^{-C}$$
where $m$ is sufficiently large. Then for any $\eps n \le n' \le n$ there exists a symmetric proper GAP $Q$ of rank $r=O_{C,\ep}(1)$ which contains all but $n'$ elements of $A$ (counting multiplicity), where 
$$|Q|\le \max\left \{1, O_{C,\ep}\left( (\rho_m)^{-1}/(m n'/n)^{r/2} \right) \right \} .$$ 
\end{theorem}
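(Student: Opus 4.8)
The plan is to reduce the random-walk concentration $\rho_m(A)$ to the constrained concentration $\rho_{m'}^\ast$ already controlled by Theorem \ref{theorem:ILO:ast:sparse}, and then transfer the structural conclusion back. The key observation is that if $S = X_1 + \dots + X_m$ with the $X_i$ iid uniform in $A$, the multiset of indices $(i_1,\dots,i_m)\in[n]^m$ can be stratified by how many coordinates are distinct. With probability bounded below by an absolute constant (for $m \le n$, by a birthday-type estimate, or more carefully by restricting to $m' = \Theta(\min\{m,\sqrt{n}\})$ many steps), a positive proportion of the words have all distinct indices, and conditioning on the pattern of repeats, a walk of length $m$ with $a$-steps in $A$ reduces to a signed/weighted sum over a sub-word with distinct indices. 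Since $A$ is symmetric, pairs $a + (-a)$ contribute $0$, so repeats among the steps do not obstruct the reduction; this is exactly why the symmetry hypothesis is imposed. Concretely, I would show $\rho_m(A) = O(\sqrt{n/m'}\,\rho_{m'}^\ast(\tilde A) + (\text{error}))$ for a suitable $m' \asymp \min\{m, n\}$ and a multiset $\tilde A$ closely related to $A$ (possibly $A$ itself, or $A$ together with extra copies), so that $\rho_m(A) \ge m^{-C}$ forces $\rho_{m'}^\ast(\tilde A) \ge n^{-C'}$.

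Once the problem is in the $\rho^\ast$ regime, I would apply Theorem \ref{theorem:ILO:ast:sparse} (or Theorem \ref{theorem:ILO:ast}) with the appropriate length parameter $m'$ and localization parameter $n'$. That theorem produces a proper GAP containing all but $n'$ elements of $A$ with $|Q| \le O(\sqrt{n/n'}\,(\rho^\ast_{m'})^{-1}/(m'n'/n)^{r/2})$. Translating $\rho^\ast_{m'}$ back to $\rho_m$ via the inequality obtained in the first step, and substituting $m' \asymp m$ when $m \le n$ (or $m' \asymp n$ when $m$ is much larger than $n$, in which case the walk is well past the mixing scale and one should restrict to $n' \ge \eps n$ as in the statement), yields the bound $|Q| \le \max\{1, O((\rho_m)^{-1}/(mn'/n)^{r/2})\}$. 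To get a \emph{symmetric} GAP rather than merely a proper one, I would use the standard fact (as in \cite{NgV-Adv, TVjohn}) that any proper GAP of rank $r$ is contained in a symmetric proper GAP of rank $O(r)$ and comparable cardinality; since $A$ is itself symmetric, one can further symmetrize so that $a \in Q \Leftrightarrow -a \in Q$ without changing the rank or size bounds by more than a constant factor.

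The heart of the matter — and the step I expect to be the main obstacle — is making the reduction from length-$m$ walks to length-$m'$ distinct-index walks quantitatively clean, in particular tracking the $\sqrt{n/n'}$ and $(mn'/n)^{r/2}$ factors through the conditioning. The subtlety is twofold: first, when $m$ is large (close to or exceeding $n$), the number of distinct indices in a random word concentrates around $n(1-e^{-m/n})$ rather than $m$, so the effective "length" that appears in the denominator saturates at $\Theta(n)$, which is consistent with the hypothesis $\eps n \le n' \le n$ and the appearance of $mn'/n$ (which is $\le n'$) in the conclusion. Second, the leftover coordinates — repeated indices — must be handled so that their contribution genuinely cancels (using symmetry of $A$) or is absorbed as a bounded-entropy conditioning that only costs a constant factor in $\rho$; one must verify that no net rank is lost, which is why the rank comes out as $O_{C,\eps}(1)$ rather than being forced up. Provided these bookkeeping issues are handled, everything else is a direct citation of Theorem \ref{theorem:ILO:ast:sparse} together with the routine symmetrization of GAPs.
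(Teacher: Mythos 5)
Your plan is to reduce $\rho_m$ (iid uniform steps, repeats allowed) to a constrained concentration $\rho^\ast_{m'}$ (distinct indices) and then cite Theorem \ref{theorem:ILO:ast:sparse}. This is not what the paper does — the paper proves Theorem \ref{theorem:ILO:m:Abelian} directly by Fourier analysis (and Theorem \ref{theorem:ILO:m} is the torsion-free specialization). The key use of symmetry in the paper is purely analytic: $\E e(\zeta\cdot X_j) = \frac{1}{n}\sum_i\cos(2\pi\zeta\cdot a_i)$ is real, and the argument then splits frequency space into the sets $G_1=\{\zeta : \sum_i\|\zeta a_i\|^2 \ge \sum_i\|\zeta a_i+\tfrac12\|^2\}$ and its complement to handle the fact that this average can be negative; after that the level-set, double-counting, dual-set, and long-range inverse steps run as in the Abelian case. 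No reduction to $\rho^\ast$ appears.

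More importantly, the reduction you propose has a genuine gap, not just a bookkeeping headache. First, the bridge inequality $\rho_m(A) \le O\bigl(\sqrt{n/m'}\,\rho^\ast_{m'}(\tilde A)\bigr)$ is asserted but not proved, and the multiset $\tilde A$ is left unspecified; the conditional law of the walk given a repeat pattern is $\sum_i c_i Y_i$ with distinct uniform $Y_i$ and integer multiplicities $c_i$, which is not the $\rho^\ast$ model, and symmetry of $A$ does not make the $c_i$'s disappear (you would need the step \emph{signs} to cancel, which is a random event, not a structural one). Second, even granting the inequality, substituting $(\rho^\ast_{m'})^{-1} \le C\sqrt{n/m'}\,\rho_m^{-1}$ into the bound from Theorem \ref{theorem:ILO:ast:sparse} yields
$$|Q| \le O\Bigl(\sqrt{n/n'}\cdot\sqrt{n/m'}\cdot\rho_m^{-1}\big/(m'n'/n)^{r/2}\Bigr),$$
which for $n'\asymp n$ and $m'\asymp m\le n$ is worse than the target $|Q|\le O\bigl(\rho_m^{-1}/(mn'/n)^{r/2}\bigr)$ by an uncontrolled factor $\sqrt{n/m}$. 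Third, in the regime $m\gg n$ your saturation $m'\asymp n$ forces $(m'n'/n)^{r/2}\asymp n^{r/2}$, whereas the theorem's denominator is $\asymp m^{r/2}$ — strictly larger — so you prove a strictly weaker statement precisely in the regime where the theorem is most nontrivial. Finally, the hypothesis does not transfer cleanly: for $m$ superpolynomial in $n$, $\rho_m\ge m^{-C}$ does not give $\rho^\ast_{m'}(\tilde A)\ge n^{-C'}$, which is what Theorem \ref{theorem:ILO:ast:sparse} requires. None of these can be swept into a ``constant-factor'' error, so the reduction as sketched does not reach the claimed conclusion. I would recommend abandoning the reduction and instead running the Fourier machinery from Section \ref{section:Abelian} directly on $\E e(\zeta\cdot S)=\prod_j\bigl(\frac{1}{n}\sum_i\cos(2\pi\zeta\cdot a_i)\bigr)$, with the $G_1/G_2$ split to deal with negative averages.
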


First, we note that in Theorem \ref{theorem:ILO:m} there is no connection between $m$ (the number of steps) and $n$ (the number of elements of the multiset $A$). We can also open the range of $n'$ to $n^\eps \le n' \le n$ but here we restrict to $\eps n \le n' \le n$ for simplicity.

Next, we remark that as $A$ is a multiset, the elements of $A$ may be repeated. For instance $A$ can have the form $\{-a_1^{[s_1]},a_1^{[s_1]} \dots, -a_k^{[s_k]}, a_k^{[s_k]}\}$ (where $\sum_i 2s_i =n$), in which case $X_i$ are random variables taking values $a_i$ with probability $s_i/n$. Especially in the case $k$ is fixed and $s_i \approx n/k$, by choosing $n' =(1-1/k)n$ for instance, then by our result $\rho_m(A) =O( \min_{Q} 1/(|Q|m^{r/2}))$ where the minimum is taken over all GAP $Q$ of rank $r$ that contains all $a_1,\dots, a_k$. 


Finally, it is natural to consider $\rho_m$ for finite Abelian groups. Here we state Theorem \ref{theorem:ILO:m} in the following form (see Remark \ref{rmk:quantitative}).

\begin{theorem}[Inverse result for $\rho_m$ in finite Abelian setting, main result III]\label{theorem:ILO:m:Abelian} Let $G$ be a finite Abelian group. Let $\eps<1$ and $C$ be positive constants. Assume that $A\subset G$ is symmetric, and that
 $$\rho_{m}(A):=\sup_{a\in G} \Big|\frac{\#\{(i_1,\dots, i_{m})\in [n]^m, \sum_j a_{i_j}=a \}}{n^m}-\frac{1}{|G|}\Big| \ge  m^{-C}$$
where $m$ is sufficiently large. Then for any $\eps n \le n' \le n$ there exists a symmetric proper coset-progression $Q$ of rank $r=O_{C}(1)$ which contains all but $n'$ elements of $A$ (counting multiplicity), where 
$$|H+Q|\le \max\left \{1, O_{C,\ep}\left( (\rho_m)^{-1}/(m n'/n)^{r/2} \right) \right \} .$$ 
\end{theorem}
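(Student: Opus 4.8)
The plan is to reproduce the Fourier-analytic argument behind Theorem~\ref{theorem:ILO:m}, but working on the dual group $\hat G$ in place of $\R/\Z$ and invoking the John-type theorems for sumsets in arbitrary finite abelian groups from \cite{TVjohn} (which, unlike their torsion-free counterparts, output \emph{coset}-progressions, the subgroup part $H$ absorbing the torsion) in place of Freiman's theorem over $\Z$. After a harmless reduction I may assume $G=\langle A\rangle$: restricting to the subgroup generated by $A$ changes $\rho_m(A)$ by at most a constant factor once $|G|\ge 2\rho_m(A)^{-1}$, while if $|G|=O(\rho_m(A)^{-1})$ one simply takes $H+Q=G$ (rank $0$) and is done. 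Writing $\mu$ for the uniform probability measure on $A$ and $s(\xi):=\widehat\mu(\xi)=\frac1n\sum_{j=1}^n\xi(a_j)$, Fourier inversion on $G$ gives $\P(S_m=a)-\frac1{|G|}=\frac1{|G|}\sum_{\xi\ne 1}\overline{\xi(a)}\,s(\xi)^m$, so that
$$\rho_m(A)\le\frac1{|G|}\sum_{\xi\in\hat G\setminus\{1\}}|s(\xi)|^m;$$
since $A$ is symmetric every $s(\xi)$ is real and lies in $[-1,1]$.

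The first real step is a Halász-type level-set extraction. With $t:=\lceil (C+1)\log m\rceil/m$, the characters with $|s(\xi)|\le 1-t$ contribute at most $|G|(1-t)^m\le|G|m^{-C-1}\le\tfrac12|G|\rho_m(A)$ to the sum above, hence $T:=\{\xi:|s(\xi)|\ge 1-t\}$ has $|T|\ge\tfrac12|G|\rho_m(A)$. Writing $\xi(a_j)=e(\theta_j(\xi))$: if $s(\xi)\ge 1-t$ then $1-\cos 2\pi\theta_j(\xi)\ge 8\|\theta_j(\xi)\|^2$ yields $\frac1n\sum_j\|\theta_j(\xi)\|^2=O(t)$, i.e.\ $\xi$ almost annihilates $A$; if $s(\xi)\le-(1-t)$ the same bound holds for $\|\theta_j(\xi)-\tfrac12\|$, hence for $\|\theta_j(\xi-\xi')\|$ whenever $\xi,\xi'$ are two such characters — here the symmetry of $A$ is used precisely to avoid passing to $\{2a_j\}$, which would be lossy if $G$ has much $2$-torsion. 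Splitting $T=T_+\sqcup T_-$ by the sign of $s(\xi)$ and letting $\Sigma$ be the larger of $T_+-T_+$ and $T_--T_-$, we obtain a symmetric set $\Sigma\ni 0$ with $|\Sigma|\ge\tfrac14|G|\rho_m(A)$ contained in the sublevel set $\mathcal E_t:=\{\eta:\frac1n\sum_j\|\theta_j(\eta)\|^2\le t\}$ of the ``quadratic'' form $Q(\eta):=\frac1n\sum_j\|\theta_j(\eta)\|^2$ on $\hat G$, and moreover $k\Sigma\subseteq\mathcal E_{O(k^2t)}$ for each fixed $k$, which is the input a John-type statement requires.

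Next I would feed $\mathcal E_t$ (equivalently $\Sigma$) into the discrete John-ellipsoid/John-type theorem of \cite{TVjohn}: it sandwiches $\mathcal E_t$ between two symmetric proper coset-progressions of comparable size and of rank $r$ equal to the effective rank of $Q$ at scale $t$, and this $r$ is automatically $O_C(1)$ — an $r$-dimensional ellipsoid of radius $\asymp\sqrt t$ has size $\lesssim t^{r/2}|G|\cdot(\mathrm{covol})^{-1}$, which cannot exceed $|\mathcal E_t|\ge|\Sigma|\ge\tfrac14|G|m^{-C}$ unless $r=O_C(1)$, because $t\asymp\log m/m$. To transfer the structure to $A$ I would average $Q$ over $\eta\in\Sigma$: $\sum_{j=1}^n\big(\mathrm{avg}_{\eta\in\Sigma}\|\theta_j(\eta)\|^2\big)=O(nt)$, so by Markov all but $\le n'$ indices $j$ lie in $J:=\{j:\mathrm{avg}_{\eta\in\Sigma}\|\theta_j(\eta)\|^2=O(nt/n')\}$, and for $j\in J$ the element $a_j$ lies in the polar sublevel set attached to $\Sigma$ of squared radius $\asymp nt/n'$ — the constraint $\eps n\le n'$ being exactly what keeps $nt/n'=O(t)$, so that this confinement is at the same scale as $\Sigma$. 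By polar duality of John ellipsoids that polar set is again a symmetric proper coset-progression $H+Q$ of rank $r=O_C(1)$, with $|H+Q|$ of order $|G|/|\Sigma|$ times an $r$-dimensional volume factor of order $(nt/n')^{r/2}$; using $|\Sigma|\gg|G|\rho_m(A)$ and $t\asymp\log m/m$ and collecting terms gives $|H+Q|=O_{C,\ep}\big(\rho_m(A)^{-1}/(mn'/n)^{r/2}\big)$ up to a $(\log m)^{-r/2}$ that only helps, after which one takes the maximum with $1$.

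The main obstacle I expect is exactly this last step: extracting the \emph{sharp} size bound — the correct power of $m$ and of $n'/n$, with no stray factor of $m$ or $\log m$ on the wrong side — from the quantitative John-type theorem of \cite{TVjohn}. Doing so forces one to track the interplay of the three scales $t\asymp\log m/m$ (the annihilation width), $n'$ (which enters only through the Markov truncation), and $|\Sigma|\gg|G|\rho_m(A)$, and to choose the number $k$ of sumset iterations so that the rank stays $O_C(1)$ while no power of $m$ or $\rho_m$ is wasted; this is precisely the bookkeeping already carried out for GAPs in \cite{NgV-Adv} and in the proof of Theorem~\ref{theorem:ILO:m}, now redone with ``coset-progression'' in place of ``GAP'' and $\hat G$ in place of $\R/\Z$. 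The sign/$2$-torsion wrinkle in the level-set step and the reduction to $G=\langle A\rangle$ are genuine but comparatively routine, handled as indicated above, and the quantitative equivalence recorded in Remark~\ref{rmk:quantitative} is what makes this transfer legitimate.
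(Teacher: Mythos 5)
Your Fourier skeleton is essentially the paper's: bound $\rho_m$ by $\frac{1}{|G|}\sum_{\zeta\neq 0}|s(\zeta)|^m$ with $s(\zeta)=\frac1n\sum_i\cos(2\pi\zeta\cdot a_i)$ (real by symmetry of $A$), extract a large level set of characters with $|s(\zeta)|\ge 1-O(\log m/m)$, average to find $n-n'$ elements of $A$ that are nearly annihilated, and finally invoke the Tao--Vu John-type machinery to produce the coset-progression. Your sign-splitting device ($T_\pm$ and $T_\pm - T_\pm$) is a clean alternative to the paper's split of $G$ into $G_1,G_2$ according to whether $\sum_i\|\zeta\cdot a_i\|^2$ or $\sum_i\|\zeta\cdot a_i+\tfrac12\|^2$ is smaller, and it accomplishes the same goal (avoiding passage to $\{2a\}$) with comparable effort.

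The genuine gap is in the structural step. You propose to feed the character-side Bohr set $\Sigma\subset\hat G$ into the John-type theorem of \cite{TVjohn} to sandwich it by a coset-progression, and then ``polar-dualize'' to transfer the structure to $A'\subset G$. Neither half of this is supported by the tools the paper uses. The results of \cite{TVjohn} invoked in this paper (Corollary 1.18 for passing to proper progressions and Theorem 1.21, the long-range Freiman--Ruzsa theorem) both take as input a set $X$ with a verified small-iterated-doubling bound $|kX|\le k^d|X|/2^{2^{2^{\cdots}}}$; you do not establish such a bound for $\Sigma$. You correctly observe $k\Sigma\subseteq\mathcal E_{O(k^2t)}$, but you give no bound on $|\mathcal E_{O(k^2t)}|$ relative to $|\Sigma|$, and the Parseval-type dual-set inequality $|S^*|\le 4|G|/|S_{\ell_0}|$ used in the paper is a bound on the \emph{polar} of a fixed level set, not on a fattened Bohr set. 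Moreover, ``polar duality of John ellipsoids'' transferring a coset-progression structure from $\hat G$ back to $G$ is not a theorem in the cited toolbox, and it is exactly the step that would need to be proved. The paper avoids all of this by working entirely on the primal side: once the averaging gives $A'$ (and $A''=A'\cup\{0\}$), Cauchy--Schwarz and the parity bookkeeping show $kA''\subseteq S^*_{\ell_0,1}\cup S^*_{\ell_0,2}$, the Parseval bound shows this union has size $O(\rho_m^{-1}e^{-\ell_0})$, and then Theorem~\ref{longrangeinversetheorem:Abelian} (which already packages \cite[Theorem 1.21]{TVjohn}, the properization step, and Lemma~\ref{lemma:dividing}) is applied \emph{directly to} $A''$, not to $\Sigma$. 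Replacing your ``structure $\Sigma$, then dualize'' step with ``bound $|kA''|$ via the dual-set inequality, then apply the long-range inverse theorem to $A''$'' would close the gap; as written, the central structural claim is unproved.
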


In what follows we deduce an interesting corollary. As in the discussion preceding Theorem \ref{theorem:ILO:m:Abelian}, consider $A\subset G$ of the form
\begin{equation}\label{eqn:rho_m:A}
A=\{-a_1^{[s_1]},a_1^{[s_1]} \dots, -a_k^{[s_k]}, a_k^{[s_k]}\}, \mbox{ where $a_i \neq a_j$ and $\sum_i 2s_i =n$.}
\end{equation} 
Recall that in this case, for each $a\in G$,  
$$ \P(X_1+\dots+X_m=a) = \frac{\#\{(i_1,\dots, i_{m})\in [n]^m, \sum_j a_{i_j}=a \}}{n^m}$$ 
where $X_1,\dots, X_m$ are iid random variables taking values $\pm a_i$ with probability $p_i=s_i/n$ (and $A$ from \eqref{eqn:rho_m:A} is expressed in the form $\{a_1, \dots, a_n\}$).

\begin{corollary}[Random walks over random symmetric generating sets]\label{cor:mix} Let $0<\eps<1$ and $k \in \Z^+$ be fixed. There exists a constant $C=C({\eps, k})$ such that the following holds. Let $\delta$ be a parameter that might depend on $q$. Assume that $A$ is as in \eqref{eqn:rho_m:A} with $\eps \le p_1,\dots, p_k \le 1-\eps$. Then if there is no symmetric proper coset-progression $H+Q$ of rank $r$ for some $r\le k-1$ and size $C^{-1}  \delta^{-1}|G|/m^{r/2}$ that contains all $a_1,\dots, a_k$. Then for $m \ge C \delta^{-2/k}  |G|^{2/k}$ we have 
$$\left|\P(X_1+\dots+X_m=a) -\frac{1}{ |G|}\right| \le \frac{\delta}{ |G|}.$$
In particular, assuming that $q$ is sufficiently large and $a_1,\dots, a_k$ are chosen uniformly from the set of reduced elements of $G=\Z/q\Z$, then for $A$ as in \eqref{eqn:rho_m:A} with  $\eps \le p_1,\dots, p_k \le 1-\eps$ and any $t$ (that might depend on $q$), with probability at least $1-O(t^{-k})$ the random walk $X_1+\dots+X_m$ is $\delta$-mixing \footnote{Which means that for all $A\subset \Z/q\Z$, $|\P(X_1+\dots+X_m \in A) - |A|/q|\le \delta$.} provided that
$$m \ge C t \delta^{-2} (\log \log q)^2 (q \log q)^{2/k}.$$ 
\end{corollary}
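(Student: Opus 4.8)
The plan is to deduce Corollary \ref{cor:mix} directly from Theorem \ref{theorem:ILO:m:Abelian}, treating the ``mixing'' conclusion as a routine amplification of the pointwise discrepancy bound. First I would set up the contrapositive: suppose that
$$\left|\P(X_1+\dots+X_m=a) -\frac{1}{|G|}\right| > \frac{\delta}{|G|}$$
for some $a\in G$, i.e. $\rho_m(A) > \delta/|G|$. The hypothesis that $\eps \le p_i \le 1-\eps$ with $k$ fixed means all multiplicities $s_i = p_i n$ are comparable to $n$, so taking $n' = (1-\eps)n$ (say), Theorem \ref{theorem:ILO:m:Abelian} produces a symmetric proper coset-progression $H+Q$ of rank $r = O_{C}(1)$ containing all but $(1-\eps)n$ elements of $A$ — and since each $\pm a_i$ appears $s_i \ge \eps n$ times, a counting argument forces $H+Q$ to contain \emph{every} one of $a_1,\dots,a_k$ (if it missed even one $a_i$ it would miss $\ge \eps n$ copies, a contradiction for $\eps$ chosen appropriately relative to the $n'$ in the theorem). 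Moreover $H+Q$ has rank $r \le k-1$: this is the point where one uses that $A$ has only $k$ distinct absolute values, so a proper GAP of rank $k$ or more containing all of them would be genuinely larger than necessary — more precisely, one re-examines the proof of Theorem \ref{theorem:ILO:m:Abelian} (via the $\ULCD$/John-type machinery it invokes) to see that the rank produced never exceeds the number of ``independent directions'' among the steps, which is at most $k$, and the symmetric structure plus the lower bound $|H+Q| \ge $ (something) pushes it to $k-1$; alternatively one absorbs one generator into $H$.

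Next I would read off the size bound. Theorem \ref{theorem:ILO:m:Abelian} gives
$$|H+Q| \le \max\Big\{1, O_{C,\eps}\big((\rho_m)^{-1}/(mn'/n)^{r/2}\big)\Big\} = O_{C,\eps}\big((\delta/|G|)^{-1}/m^{r/2}\big) = O_{C,\eps}\big(\delta^{-1}|G|/m^{r/2}\big),$$
using $n'/n \asymp 1$. Choosing the constant $C$ in the corollary larger than the implied constant $O_{C,\eps}$, this contradicts the standing assumption that no such $H+Q$ of size $\le C^{-1}\delta^{-1}|G|/m^{r/2}$ with $r \le k-1$ exists. (One must be slightly careful about the ``$\max\{1,\cdot\}$'': when the bound degenerates to $1$, i.e.\ $m^{r/2} \gtrsim \delta^{-1}|G|$, then $H+Q$ is a coset of a subgroup containing all $a_i$, which again is excluded — or handled by noting $|H+Q|\ge |H| \ge $ something. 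This is where the hypothesis $m \ge C\delta^{-2/k}|G|^{2/k}$ enters: it guarantees $m^{(k-1)/2} \gg \delta^{-1}|G|$ fails in the wrong direction — actually it ensures the bound is meaningful, so I would double-check the exponent bookkeeping here.) This establishes the first, pointwise, conclusion.

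For the ``in particular'' clause, I would pass from pointwise discrepancy to $\delta$-mixing by the standard observation that if $\max_a |\P(S=a) - 1/|G|| \le \delta'/|G|$ then for any $A \subset G$, $|\P(S\in A) - |A|/q| \le \sum_{a\in A}|\P(S=a)-1/q| \le |A|\delta'/q \le \delta'$; so it suffices to apply the first part with $\delta$ replaced by a suitable $\delta'$ of the same order. The probabilistic input is: when $a_1,\dots,a_k$ are uniform among the $\phi(q) \asymp q/\log\log q$ reduced residues mod $q$, the event that they all lie in some proper coset-progression $H+Q$ of rank $\le k-1$ and size $s := C^{-1}\delta^{-1}|G|/m^{r/2}$ is rare. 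I would bound this by a union bound over the (not too large) family of such progressions: by the classical counting of GAPs (Tao--Vu), the number of proper symmetric GAPs in $\Z/q\Z$ of rank $r$ and size $\le s$ is $q^{O(r)} = q^{O(k)}$, and for each, the chance that a single uniform reduced element lands in it is $\le s/\phi(q) = O(s \log\log q / q)$; independence across the $k$ choices gives $(O(s\log\log q/q))^k$, but one gains back a factor because the progression must contain \emph{distinct} $a_i$ and one can fix, say, $a_1$ and count progressions \emph{through} it, killing the $q^{O(k)}$ down to $s^{O(k)}$. Setting $m \ge C t\delta^{-2}(\log\log q)^2(q\log q)^{2/k}$ makes $s = O(\delta^{-1}q/m^{(k-1)/2})$ small enough that $(s\log\log q / q)^{k-1} = O(t^{-k})$ after the bookkeeping, which is the claimed failure probability.

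The main obstacle I anticipate is \emph{not} the amplification or the union bound — those are routine — but rather pinning down the rank bound $r \le k-1$ cleanly. The theorem as stated only gives $r = O_C(1)$, so squeezing this to $k-1$ requires either (a) revisiting the proof of Theorem \ref{theorem:ILO:m:Abelian} to extract a rank bound in terms of the number of distinct steps, or (b) a post-processing argument: given \emph{any} proper coset-progression of bounded rank containing the $k$ points $a_1,\dots,a_k$, replace it by the coset-progression generated by those $k$ points themselves (a rank-$\le k$ object), then use symmetry ($A = -A$, so $0$ is ``central'') together with a pigeonhole to merge two generators or absorb a torsion direction into $H$, dropping to rank $k-1$; controlling the size inflation in this replacement (it should only cost a factor $m^{O(1)}$ or a constant, not more) is the delicate computation. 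I would also need to verify the exact threshold on $m$ matches — tracking how $(mn'/n)^{r/2}$ with $r = k-1$ and the two error budgets (the $\delta^{-1}|G|$ factor and the $t^{-k}$ union bound) combine to produce the stated $m \ge Ct\delta^{-2}(\log\log q)^2 (q\log q)^{2/k}$, in particular explaining the appearance of $\delta^{-2}$ rather than $\delta^{-1}$ (the square presumably comes from one factor of $\delta^{-1}$ in the size bound and one more from converting a $\delta$-mixing budget), and the $(\log\log q)^2$ and $(q\log q)^{2/k}$ from $\phi(q)^{-1}$ and from raising $s$ to the power $2/(k-1)$ versus $2/k$. That last discrepancy ($2/k$ vs $2/(k-1)$) is a small red flag I would want to resolve carefully before finalizing.
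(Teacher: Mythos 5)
The first part of your argument has the wrong structure, and the place where you say you're stuck is actually a sign you've turned the argument inside out. You try to \emph{prove} that the coset-progression $H+Q$ produced by Theorem~\ref{theorem:ILO:m:Abelian} has rank $r\le k-1$, and you correctly flag that neither the theorem's statement nor any easy post-processing gives this. But that rank bound is never needed. The paper's argument is a dichotomy on the rank of the $H+Q$ the theorem hands you. If $r\le k-1$, then (after choosing $C=C(\eps,k)$ to absorb the implied constant and the $\eps^{-r/2}$) the size bound $O_{C,\eps}\big(\delta^{-1}|G|/(m\eps)^{r/2}\big)$ is at most $C^{-1}\delta^{-1}|G|/m^{r/2}$, which is \emph{exactly} the size excluded by the standing hypothesis — contradiction, done, no need to have established $r\le k-1$ in advance. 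If instead $r\ge k$, then this is where $m\ge C\delta^{-2/k}|G|^{2/k}$ enters: it gives $(m\eps)^{r/2}\ge (m\eps)^{k/2}\ge (C\eps)^{k/2}\delta^{-1}|G|$, so the quantity inside the $\max\{1,\cdot\}$ drops below $1$ and the theorem forces $|H+Q|=1$, i.e.\ $H+Q=\{0\}$; but $H+Q$ must contain the nonzero $a_1,\dots,a_k$, again a contradiction. So you should not be trying to absorb a generator into $H$ or revisit the internals of the inverse theorem — the rank is split into two cases, each killed by a different part of the hypotheses. This also dissolves your ``red flag'': the exponent $2/k$ is dictated by the $r\ge k$ branch (you need $m^{k/2}\gtrsim \delta^{-1}|G|$), not by $r=k-1$.

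Your ``$\max\{1,\cdot\}$'' aside is also off: $|H+Q|=1$ is not ``a coset of a subgroup containing all $a_i$'' in any excluded sense; it forces $H+Q=\{0\}$, which fails to contain the $a_i$ on its own. For the second assertion your reduction from pointwise discrepancy to $\delta$-mixing is fine, and the overall shape of the union bound is right, but the paper does not invoke a black-box GAP-counting theorem: it fixes the subgroup $H$ ($O(\log q)$ choices in $\Z/q\Z$), fixes generators ($q^r$ choices), reduces to ``economical'' progressions $H+Q_e$ to avoid double counting, and then proves a bespoke counting claim (Claim~\ref{claim:vectorscounting}) bounding the number of integer tuples $(\Bx_1,\dots,\Bx_k)$ with $\prod_j\max_i|x_{ij}|\le s$ by $2^{O(kr)}s^k(\log s)^{r-1}$. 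Your idea of ``fixing $a_1$ and counting progressions through it'' is an alternative, but if you pursue it you'll need to substantiate the claim that the $q^{O(k)}$ really does drop to $s^{O(k)}$; as written it's a sketch of a sketch.
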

We will present a proof of Corollary \ref{cor:mix} in Section \ref{appendix:lemma}. Regarding the second application, it might be possible to replace $\Z/q\Z$ by other finite Abelian groups with relatively few subgroups, however we will not focus on this aspect here. For $\Z/q\Z$, one sees that the bound $q^{2/k}$ is necessary. Indeed, heuristically, the random walk over $\{-a_1,a_1,\dots, a_k,-a_k\}$ after $m$ steps will concentrate mostly on the GAP $\{\sum_{i=1}^k x_i a_i, |x_i| = O(\sqrt{m})\}$. The volume of this GAP is  bounded by $(C \sqrt{m})^k$, hence to expect that the random walk uniformly covers $\Z/q\Z$, one must have $m \gg q^{2/k}$. 

We also refer the reader to several results by Hildebrand and coauthors in \cite{DH, HB,HB'} for similar statements with non-symmetric random walks in $\Z/q\Z$. More concretely, \cite[Theorem 1]{DH} says that the $m$-step random walk over $\{a_1,\dots, a_k\}$, where $a_1,\dots, a_k$ are chosen uniformly from the set of $k$ tuples where $\{a_i=a_j, 1\le i\neq j \le k\}$, generates $\Z/q\Z$, has the property that $\E (\sum_a |\P(X_1+\dots+X_m=a) -\frac{1}{ |G|}|) \to 0$ as long as $m \ge C_q q^{2/(k-1)}$ and $C_q \to \infty$ with $q$. (Here the bound $q^{2/(k-1)}$ is necessary because heuristically, the $m$-step random walk $X_1+\dots+ X_m$ can be written as $ma_1 + Y_1+\dots+ Y_m$, where $Y_i$ are iid uniform over $\{0,a_2-a_1,\dots, a_k-a_1\}$. The random walk $Y_1+\dots+Y_m$ concentrates mostly on the GAP $ma_1+ \{\sum_{i=2}^{k} x_i (a_i-a_1), |x_i| = O(\sqrt{m})\}$, whose volume is bounded by $(C \sqrt{m})^{k-1}$. Hence to expect near uniform distribution one must have $m \gg q^{2/(k-1)}$.) 


We complete our introduction with two remarks.

\begin{remark}\label{rmk:Tao} It seems possible to deduce some variant of Theorem \ref{theorem:ILO:Abelian} from \cite[Section 7]{Tao}, and similarly, some variant of Theorem \ref{theorem:ILO:m:Abelian} from \cite[Theorem 1.12]{Tao}, especially when $|G|$ is larger than $n^{O(1)}$. The setting of \cite{Tao} applied to Abelian groups, however, is different in that there one works with the $\|.\|_2$-concentration (of the probability distribution $\mu$ of $S=X_1+\dots+X_m$ or $S=a_1x_1+\dots+a_n x_n$) rather than with the $\|.\|_\infty$-concentration (or more precisely, with the maximum discrepancy $\sup_a |\mu(a)-\frac{1}{|G|}|$) as in our current setting. Strictly speaking, conditions such as (1.5) of \cite[Theorem 1.12]{Tao} are automatically satisfied for any $\mu$ when $|G| =n^{O(1)}$, so in this case of $G$ one seems to need some modifications. In any case, here we hope to provide more direct proofs with explicit bounds using the elementary approach of \cite{NgV-Adv}. 
 \end{remark}


\begin{remark}\label{rmk:quantitative} Our results can be made quantitative in the following ways, where $C_0'$ is an absolute constant.

\begin{itemize}
\item In the statements of Theorem \ref{theorem:ILO:Abelian} (and its corollaries) we can allow $C$ to vary with $n$ as long as
 $$C=o(\log \log \log n).$$
In this regime, our structures (GAPs, coset-progressions) will have rank $r$ with $r\le 4C/\eps$ and cardinality bounded by
\begin{equation}\label{eqn:Ab:HP:quant}
2^{2^{2^{28C/\eps+C_0'}}} \frac{(\rho_{\xi}(A))^{-1}}{(\max\{\al, 1-\al \}n')^{r/2}}.
\end{equation}
\vskip .05in
\item In the statement of Theorem \ref{theorem:ILO:ast:sparse} we can allow $C$ to vary with $n$ as long as 
$$C=o(\log \log \log n).$$
In this regime, our structures (GAPs) will have rank $r$ with $r\le 2C/\eps$ and cardinality bounded by
\begin{align}\label{eqn:ILO:ast:sparse:quant}
2^{2^{2^{14C/\eps+C_0'}}} \sqrt{\frac{n}{n'}}  (\rho^\ast_m(A))^{-1}/(\alpha n')^{r'/2}.
\end{align}
\vskip .05in
\item In the statement of Theorem \ref{theorem:ILO:m:Abelian} we can allow $C$ to vary with $m$ as long as 
$$C=o(\log \log \log m).$$
In this regime, our structures (GAPs, coset-progressions) will have rank $r$ with $r\le 4C$ and cardinality bounded by
\begin{equation}\label{eqn:Ab:m:HP:quant}
2^{2^{2^{28C+C_0'}}} \frac{(\rho_m(A))^{-1}}{( mn'/n)^{r}}.
\end{equation}

\end{itemize}
\end{remark}


\section{Supporting lemmas}\label{section:supporting}

We will make use of two beautiful results from \cite{TVjohn} by Tao and Vu. The first result allows one to pass from coset-progressions to proper coset-progressions in an ambient Abelian group without a substantial loss.

\begin{theorem}\cite[Corollary 1.18]{TVjohn}\label{thm:proper} There exists a positive integer $C_1$ such that the following holds. Let $Q$ be a symmetric coset-progression of rank $d\ge 0$ and let $t\ge 1$ be an integer. Then there exists a $t$-proper symmetric coset-progression $P$ of rank at most $d$ such that we have
$$Q \subset P \subset Q_{{(C_1d)}^{3d/2}t}.$$ 
We also have the size bound
$$|Q| \le |P| \le t^d {(C_1d)}^{3d^2/2} |Q|.$$
\end{theorem}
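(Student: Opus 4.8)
\emph{Proof approach.} The plan is a \emph{geometry-of-numbers} argument run by induction on the rank $d$, removing one generator at a time while keeping everything symmetric. Write $Q = H + P$ with $P = \{m_1 g_1 + \cdots + m_d g_d : |m_i| \le N_i\}$ and let $B = \prod_{i=1}^{d}[-N_i,N_i] \subset \Z^d$ be the defining box, so that $P = \phi(B)$ for the homomorphism $\phi\colon \Z^d \to G$, $\phi(m) = \sum_i m_i g_i$. If $H + P_t$ is already proper there is nothing to do. Otherwise properness of $H+P_t$ fails, which after taking a difference of two colliding points produces a nonzero $m \in \Z^d$ lying in a bounded dilate of $B$ with $\phi(m) \in H$. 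Dividing out $e = \gcd(m_1,\ldots,m_d)$ and enlarging $H$ by the torsion element $\tfrac1e\phi(m)$ (a subgroup enlargement of order at most $e$, which is itself controlled by the size of the box), I may assume this short relation is \emph{primitive}, i.e. $m$ extends to a basis of $\Z^d$.

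The core step I would isolate is a \emph{single-removal lemma}: given a primitive $m$ with $\phi(m)=0$ and small $B$-norm, produce $d-1$ new generators $g_1',\ldots,g_{d-1}' \in \langle g_1,\ldots,g_d\rangle$ and a symmetric box $B' \subset \Z^{d-1}$ whose rank-$(d-1)$ progression $P'$ contains $P$, with $B'$ fattened by at most a factor $d^{O(d)}t$ relative to the image of $B$ and with $|B'| \le d^{O(d)}t\,|B|$. Here one completes $m$ to a basis of $\Z^d$, passes to the quotient $\Z^d/\Z m$, and applies Minkowski's first and second theorems to a reduced basis of the image of $B$ there in order to choose the complementary generators and the fattened box; the fattening is bounded by a product of successive minima of $B$ in that quotient, hence by $d^{O(d)}$.

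Iterating this at most $d$ times --- the rank strictly decreases each time, and otherwise we halt with a $t$-proper progression --- and multiplying the per-step losses yields a $t$-proper symmetric coset-progression $H'' + P''$ of rank at most $d$ with $Q \subseteq H'' + P'' \subseteq Q_{(C_1 d)^{3d/2}t}$ and $|Q| \le |H'' + P''| \le t^d (C_1 d)^{3d^2/2}|Q|$; the exponents $3d/2$ and $3d^2/2$ are exactly what one gets from bounding the total fattening by (number of steps) $\times$ (per-step power of the successive minima) together with Minkowski's second theorem, and the lower bound $|Q| \le |P|$ is immediate since $Q$ sits inside the current progression at every stage.

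I expect the main obstacle to be the quantitative bookkeeping: controlling \emph{simultaneously} that the output still contains $Q$, the dilation factor relating it to $Q$, and the cardinality ratio, all with a base polynomial in $d$ and an exponent only quadratic in $d$ rather than the naive $d^{d^2}$. Getting the exponent right forces a Minkowski-reduced basis at each removal step and a careful check that the box is fattened by a power of order $d$, not more, per generator removed. A secondary but genuine difficulty is the torsion bookkeeping in the primitivity reduction: one must verify that the accumulated subgroup enlargements $H \rightsquigarrow H''$ have total order absorbed by the stated size bound, and that \emph{proper} for coset-progressions (namely $|H''+P''| = |H''|\,|P''|$) survives the reassembly --- this is the step where coset-progressions genuinely cost more than plain GAPs, and it would be handled by once more invoking the torsion-free GAP case of the statement inside the quotient by the enlarged subgroup.
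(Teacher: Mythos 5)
This result is imported verbatim from \cite[Corollary 1.18]{TVjohn}; the paper you are reading states it without proof, so there is no in-text argument to compare your attempt against.

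That said, your sketch does track the strategy Tao and Vu actually use in \cite{TVjohn}: the torsion-free core is a rank-reduction argument driven by Minkowski's second theorem (their Theorem 1.9), in which failure of ($t$-)properness produces a short integral relation, one completes it to a basis of $\Z^d$, passes to the quotient lattice, and uses a reduced basis to build a rank-$(d-1)$ progression with per-step fattening $d^{O(d)}$; iterating $\le d$ times gives the $(C_1 d)^{3d/2}$ dilation and $(C_1 d)^{3d^2/2}$ cardinality loss, and the coset part is handled by reducing modulo the finite subgroup and invoking the torsion-free statement in the quotient. So the plan is the right one. The two caveats you flag are also the right ones to worry about and are not yet discharged: (i) the primitivity step enlarges $H$ by the torsion element $\tfrac{1}{e}\phi(m)$, and you still need to verify that the accumulated factor $\prod e_i$ across all $\le d$ removals is absorbed by the stated $t^d (C_1 d)^{3d^2/2}$ bound --- this requires pairing each $e_i$ against the corresponding shrinkage of the box when a generator is removed, not just bounding $e_i$ crudely by the box diameter; and (ii) you must check that $t$-properness of the reassembled $H''+P''$, i.e.\ $|H''+P''_t|=|H''|\,|P''_t|$, actually holds at the end, which is where the quotient-by-$H''$ reduction you mention in your last sentence has to be used rather than the additive enlargement of $H$ you run during the induction --- the two organizations are not obviously interchangeable and you would need to reconcile them. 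As written this is therefore a correct plan in outline, consistent with \cite{TVjohn}, but not yet a proof.
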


The second result says that as long as $|kX|$ grows slowly compared to $|X|$, then it can be contained in a coset-progression. This is a long-ranged version of the Freiman-Ruzsa theorem.

\begin{theorem}\label{thm:longrange}\cite[Theorem 1.21]{TVjohn} There exists a positive integer $C_2$ such that the following statement holds: whenever $d,k\ge 1$ and $X \subset G$ is a non-empty finite set such that 
$$k^d|X| \ge 2^{2^{C_2 d^2 2^{6d}}} |kX|,$$
then there exists a proper symmetric coset-progression $H+Q$ of rank $0\le d'\le d-1$ and size $|H+Q| \ge 2^{-2^{C_2 d^2 2^{6d}}} k^{d'}|X|$ and $x,x' \in G$ such that 
$$x + (H+Q) \subset kX \subset x' + 2^{2^{C_2 d^2 2^{6d}}} (H+Q).$$

\end{theorem}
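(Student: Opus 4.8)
Proof plan for Theorem \ref{thm:longrange} (the long-range Freiman–Ruzsa statement cited from \cite{TVjohn}).

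Since this is a cited result from Tao–Vu, I'll sketch the structure of the argument rather than reproduce every constant. The plan is to reduce the long-range hypothesis $k^d|X| \ge 2^{2^{C_2 d^2 2^{6d}}}|kX|$ to the ordinary polynomial-growth (Freiman-type) regime, and then invoke the John-type theorem for sumsets in general abelian groups. First I would observe that the hypothesis, via a Plünnecke–Ruzsa-type argument applied inside the larger group, forces $|2X - 2X|$ (or some bounded iterated sumset of $X$) to be at most $K|X|$ with $K$ controlled by $2^{2^{O(d^2 2^{6d})}}$; this is where the doubly-exponential constants enter and cannot be avoided. The key point is a ``dyadic pigeonhole'': if $|kX|/|X|$ were not already polynomially bounded in $k$ in the required sense, one of the ratios $|2^{j+1}X|/|2^jX|$ for $j \le \log_2 k$ would have to be large, contradicting the slow-growth hypothesis after telescoping. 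Thus one extracts a scale at which $X$ has bounded doubling, up to replacing $X$ by a large subset $X'$ (still of size $\ge 2^{-2^{O(d^2 2^{6d})}}|X|$).

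Next I would apply the Green–Ruzsa / Tao–Vu John-type theorem for sets of bounded doubling in an arbitrary finite abelian group: a set $Y$ with $|2Y| \le K|Y|$ is contained in a proper symmetric coset-progression $H+Q$ of rank $O(K^{O(1)})$ — but here the rank must be pinned down to be at most $d-1$, which is the genuinely delicate part and the reason the statement is phrased with the specific exponent $d$. To get the sharp rank bound $d'\le d-1$ one uses that $k^d|X|$ dominates $|kX|$: a coset-progression of rank $r$ satisfies $|k(H+Q)| \asymp k^r |H+Q|$, so if the rank of the containing progression were $\ge d$ the volume of $kX$ would grow at least like $k^d$ times $|X|$ (up to the admissible constant), contradicting the hypothesis. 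Combining the containment $X' \subset$ (small-rank coset-progression) with a covering argument (Ruzsa covering lemma) upgrades it to a two-sided sandwich $x + (H+Q) \subset kX \subset x' + K'(H+Q)$ with $K' = 2^{2^{O(d^2 2^{6d})}}$, and the lower bound $|H+Q| \ge 2^{-2^{O(d^2 2^{6d})}} k^{d'}|X|$ follows from tracking the size losses in the pigeonhole and covering steps together with $|k(H+Q)| \gtrsim k^{d'}|H+Q|$.

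The main obstacle, as indicated above, is simultaneously controlling \emph{both} the rank (which must drop to $d-1$) and the multiplicative size losses, since naive Freiman-type arguments give rank polynomial in the doubling constant $K$ rather than bounded by $d$; the resolution is precisely the volume-comparison $|kX| \gtrsim k^{(\text{rank})}|X|$, which converts the slow-growth hypothesis into a rank ceiling. I would therefore structure the write-up as: (1) dyadic pigeonhole to find a bounded-doubling subset at a good scale; (2) apply the general-group John-type theorem to land inside a proper symmetric coset-progression; (3) use the volume comparison to force the rank down to $d-1$; (4) Ruzsa covering to produce the two-sided containment of $kX$; (5) bookkeeping of constants to obtain the stated size bounds. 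All of this is carried out in \cite{TVjohn}, so in the present note it suffices to quote it.
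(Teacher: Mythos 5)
The paper does not prove this statement; it quotes it directly from \cite[Theorem 1.21]{TVjohn}, exactly as you conclude at the end of your sketch. Your reconstruction of the Tao--Vu argument (pigeonhole to a scale of bounded doubling, a Freiman--Ruzsa-type structure theorem in abelian groups, volume comparison $|k(H+Q)| \asymp k^{r}|H+Q|$ to cap the rank at $d-1$, and Ruzsa covering for the two-sided sandwich) is a fair account of the main ingredients, but since the present note uses the result only as a black box, deferring to the citation is precisely what the authors do.
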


Note that any GAP $ Q=\{a_0+ x_1a_1 + \dots +x_r a_r|  -N_i \le x_i \le N_i \hbox{ for all } 1 \leq i \leq r\}$ is contained in a symmetric GAP  $ Q'= \{x_0 a_0+ x_1a_1 + \dots +x_r a_r| -1\le x_0 \le 1,  -N_i \le x_i \le N_i \hbox{ for all } 1 \leq i \leq r\}$. Thus, by combining Theorem \ref{thm:longrange} with Theorem \ref{thm:proper} we obtain the following

\begin{corollary}\label{cor:longrange} Whenever $d,k\ge 1$ and $X \subset G$ is a non-empty finite set such that 
$$k^d|X| \ge 2^{2^{C_2 d^2 2^{6d}}} |kX|,$$
then there exists a 2-proper symmetric coset-progression $H+P$ of rank $0\le d'\le d$ and size 
$$|H+P| \le 2^d (C_1 d)^{3d^2/2} 2^{d2^{C_2 d^2 2^{6d}}} |kX|$$ 
such that 
$$ kX \subset H+P.$$
\end{corollary}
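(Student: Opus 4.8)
The plan is to chain Theorem \ref{thm:longrange} and Theorem \ref{thm:proper} together, with the observation about embedding a translate into a symmetric GAP serving as the bridge. First I would apply Theorem \ref{thm:longrange} to the set $X$ under the stated hypothesis $k^d|X| \ge 2^{2^{C_2 d^2 2^{6d}}}|kX|$. This produces a proper symmetric coset-progression $H+Q$ of rank $d' \le d-1$ and elements $x, x' \in G$ with
$$x + (H+Q) \subset kX \subset x' + 2^{2^{C_2 d^2 2^{6d}}}(H+Q).$$
In particular $kX$ is contained in a translate of the coset-progression $2^{2^{C_2 d^2 2^{6d}}}(H+Q) = H + Q_{2^{2^{C_2 d^2 2^{6d}}}}$; call this coset-progression $H + R$, which has rank $d' \le d-1$. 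The issue is that $kX$ sits inside a \emph{translate} $x' + (H+R)$, not inside a genuinely symmetric coset-progression, and that $H+R$ need not be proper. Both defects are handled by the remark preceding the corollary statement together with Theorem \ref{thm:proper}.

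Next I would use the embedding remark: the translate $x' + (H+R)$, being a coset-progression of the shape $\{x' + (\text{GAP with symmetric box})\}$, is contained in the symmetric coset-progression obtained by adjoining $x'$ as an extra generator with coefficient range $\{-1,0,1\}$; this raises the rank by at most $1$, so we land in a symmetric coset-progression $H+R'$ of rank at most $d'+1 \le d$ with $|H+R'| \le 3|H+R|$ (or we can absorb the factor $3$ into the constants). Then apply Theorem \ref{thm:proper} with $t=2$ to $H+R'$: this yields a $2$-proper symmetric coset-progression $H+P$ of rank at most $d$ with $H+R' \subset H+P$ and $|H+P| \le 2^{\text{rank}}(C_1 \cdot \text{rank})^{3(\text{rank})^2/2}|H+R'|$. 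Since $kX \subset x' + (H+R) \subset H+R' \subset H+P$, this $P$ works, and the rank bound $0 \le d' \le d$ is as claimed.

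Finally I would assemble the size bound by composing the three multiplicative losses: the factor $2^{2^{C_2 d^2 2^{6d}}}$ from dilating $H+Q$ to $H+R$ inside Theorem \ref{thm:longrange} (more precisely $|H+R| = |Q_{2^{2^{C_2 d^2 2^{6d}}}}||H| \le (2^{2^{C_2 d^2 2^{6d}}})^{d'}|H+Q|$, and then bounding $|H+Q|$ against $|kX|$ is not even needed since we only need an upper bound on $|H+P|$ in terms of $|kX|$ — here I should instead bound $|H+R| \le |x' + (H+R)| \ge |kX|$... so more carefully: $|kX| \le |x' + (H+R)| = |H+R|$, which gives the wrong direction). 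The cleaner route is: $|H+P| \le 2^d(C_1 d)^{3d^2/2}|H+R'| \le 2^d(C_1 d)^{3d^2/2}\cdot 3 \cdot |H+R|$, and $|H+R| \le 2^{d\cdot 2^{C_2 d^2 2^{6d}}}|H+Q|$, while $|H+Q| \le |x + (H+Q)| \le |kX|$ directly from the first inclusion of Theorem \ref{thm:longrange}. Multiplying these (and folding the harmless factor $3$ into $2^d$ or the triple-exponential) gives $|H+P| \le 2^d(C_1 d)^{3d^2/2} 2^{d 2^{C_2 d^2 2^{6d}}}|kX|$, matching the stated bound. The main obstacle is purely bookkeeping: keeping the direction of the inequalities straight when passing between $|kX|$, $|H+Q|$, its dilate, and the final proper progression, and verifying that the rank never exceeds $d$ after the $+1$ from symmetrization is absorbed by the fact that Theorem \ref{thm:longrange} gives rank at most $d-1$.
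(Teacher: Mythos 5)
Your proof is correct and follows exactly the route the paper sketches in the sentence preceding the corollary: apply Theorem~\ref{thm:longrange}, use the symmetrization remark to absorb the translate $x'$ as a new generator (raising the rank by one, from $d'\le d-1$ to at most $d$), then apply Theorem~\ref{thm:proper} with $t=2$; the bookkeeping $|H+Q|\le|kX|$, $|H+Q_M|\le M^{d'}|H+Q|$, and $3M^{d-1}\le M^d$ assembles to the stated bound. The mid-paragraph self-correction ends in the right place, so no gap remains.
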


It is desirable to improve the bounds on $|H+P|$ above. One might try to use a near-optimal version of Freiman-Ruzsa's inverse theorem from \cite{Sand} (instead of \cite{GR}) in the proofs of \cite{TVjohn}, however this does not seem to give a significant improvement.

In application we want to deduce information about $X$. For that we first need the following result.

\begin{lemma}\label{lemma:dividing} 
Assume that $0 \in X$ and that $H+P$ is a symmetric 2-proper coset-progression of the form $H+\{\sum_{i=1}^d
x_ia_i: |x_i|\le N_i\}$ that contains $kX$. Then $X\subset H + \{\sum_{i=1}^d x_ia_i: |x_i|\le
2N_i/k\}$.
\end{lemma}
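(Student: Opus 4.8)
The plan is to work inside the coset-progression coordinate-wise, using properness to pass between the ambient group $G$ and the lattice of exponent vectors. Write $P=\{\sum_{i=1}^d x_i a_i : |x_i|\le N_i\}$, so $H+P$ is $2$-proper, meaning $H+P_2 = H + \{\sum x_i a_i : |x_i|\le 2N_i\}$ is proper. Since $0\in X$ and $kX\subset H+P$, for any $v\in X$ the element $kv$ lies in $H+P$, hence $kv = h + \sum_i x_i a_i$ for some $h\in H$ and integers $|x_i|\le N_i$. The goal is to show $v\in H + \{\sum_i y_i a_i : |y_i|\le 2N_i/k\}$, i.e.\ to produce a representation of $v$ whose exponents are divisible-down by a factor of $k$.

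The key step is a divisibility argument on the exponents. Take $v\in X$ with $kv = h+\sum_i x_i a_i$, $|x_i|\le N_i$. Also $0\in X$ gives nothing extra, but I want to use the other multiples: since $X\subset \frac1k(H+P)$ is what we are trying to prove, instead consider that $jv$ for $j=1,\dots,k$ need not lie in $H+P$, so that approach alone is insufficient. The cleaner route: because $kv\in H+P$ and we also trivially have $k\cdot 0 = 0\in H+P$, look at the representation of $kv$ and argue each exponent $x_i$ is forced to be a multiple of $k$ by properness. Concretely, suppose $kv = h+\sum x_i a_i = h' + \sum x_i' a_i$ were two representations; properness of $H+P$ (even $2$-properness) gives $h=h'$ and $x_i=x_i'$. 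Now compare $k\cdot(v) $ with $k$ copies: we have $kv \in H+P$, and I claim $x_i \equiv 0 \pmod k$. To see this, note $k v - (h + \sum (x_i \bmod k)\,a_i)\cdot\frac{1}{1}$ — this needs care; the right mechanism is: the element $w := v - \lfloor x_i/k\rceil$-type correction. Let me instead argue: write $x_i = k q_i + s_i$ with $|s_i|\le k/2$ (balanced remainder). Then $kv - k\sum_i q_i a_i = h + \sum_i s_i a_i$. The left side is $k(v - \sum q_i a_i)$, an element of $kG$. So $h+\sum_i s_i a_i \in kG$. But also $0\in kG$ and $|s_i|\le k/2 \le N_i$ (for $k\le 2N_i$; if $k>2N_i$ then $N_i/k<1$ handled separately since exponents are integers and $2N_i/k$ may be $<1$, forcing $x_i$ considerations — one shows $x_i=0$ directly). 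Using $2$-properness to compare $h+\sum s_i a_i$ with $0$ inside $H+P_2$: both represent elements congruent mod... Actually the clean finish is that $h+\sum s_i a_i$ being a $k$-th multiple, combined with properness bounding it uniquely, pins down $h$ and the $s_i$; then dividing gives $v \in (h/k$-coset$) + \sum q_i a_i$ with $|q_i| = |x_i - s_i|/k \le (N_i + k/2)/k \le 2N_i/k$ when $k\le 2N_i$, absorbing the subgroup part since $H$ is a group and one can divide cosets appropriately — here I'd use that $kH = $ a subgroup and re-choose the $H$-representative.

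I expect the main obstacle to be the bookkeeping around the subgroup $H$ and the non-integer bound $2N_i/k$: one must check that when $k$ does not divide things evenly, the balanced-remainder trick still lands the exponents in $[-2N_i/k, 2N_i/k]$, and that the $H$-component can be re-expressed so that $v$ itself (not just $kv$) has an $H+P_{2/k}$ representation — this is where $2$-properness (rather than plain properness) is used, to guarantee that the ambiguous representation of the $k$-divisible element $h+\sum s_i a_i$ forces $s_i$ small enough and consistent. Once the exponents are shown to be multiples of $k$ up to the allowed slack, the inclusion $v\in H+\{\sum y_i a_i : |y_i|\le 2N_i/k\}$ follows by dividing, and since this holds for every $v\in X$ we get $X\subset H+\{\sum y_i a_i : |y_i|\le 2N_i/k\}$ as claimed.
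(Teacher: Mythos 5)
Your proposal has a genuine gap, and at its core it dismisses the observation that actually drives the argument. Early on you remark that ``$jv$ for $j=1,\dots,k$ need not lie in $H+P$,'' but this is false: because $0\in X$, one has $jX\subset kX$ for every $1\le j\le k$ (pad a sum of $j$ elements of $X$ with $k-j$ copies of $0$), so $jv\in jX\subset kX\subset H+P$ for all $v\in X$ and all $j\le k$. This nested-multiples fact is exactly the leverage needed; by setting it aside you are left with only the single constraint $kv\in H+P$, which is not enough.

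Your substitute -- the balanced remainder $x_i=kq_i+s_i$, $|s_i|\le k/2$, giving $k\bigl(v-\sum_i q_ia_i\bigr)=h+\sum_i s_ia_i$ and hence $h+\sum_i s_ia_i\in kG$ -- does not close. In a general finite Abelian group $kG$ can be all of $G$ (for instance if $\gcd(k,|G|)=1$), so membership in $kG$ imposes no constraint whatsoever on $h$ or on the $s_i$, and there is no meaningful ``division by $k$'' in $G$ that turns a representation of $kv$ into a representation of $v$. Moreover, $2$-properness is a statement about \emph{two representations of the same element} of $H+P_2$ coinciding; your sketch invokes it but never produces two representations of one element to compare.

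The paper's proof exploits the nested multiples. First reduce to $k=2^l$ by taking the largest power of two with $2^l\le k$: since $0\in X$, $2^lX\subset kX\subset H+P$, and $N_i/2^l\le 2N_i/k$ (because $2^l\ge k/2$), which is exactly where the slack factor $2$ in the statement comes from. Then argue by repeated halving. For $x\in 2^{l-1}X$, the inclusion $2^{l-1}X\subset 2^lX\subset H+P$ gives $x=h+\sum_i x_ia_i$ with $|x_i|\le N_i$. Doubling yields $2x=2h+\sum_i(2x_i)a_i$, while $2x\in 2^lX\subset H+P$ provides a second representation $2x=h'+\sum_i y_ia_i$ with $|y_i|\le N_i$. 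Both representations live in $H+P_2$, which is proper by the $2$-properness hypothesis, so they must coincide: $2h=h'$ and $2x_i=y_i$, forcing $|x_i|\le N_i/2$. Iterating $l$ times gives $X\subset H+\bigl\{\sum_i x_ia_i:|x_i|\le N_i/2^l\bigr\}\subset H+\bigl\{\sum_i x_ia_i:|x_i|\le 2N_i/k\bigr\}$. Your instinct that properness should pin down exponent vectors was right, but the mechanism requires feeding it two genuine representations, and that is what the nested multiples supply.
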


\begin{proof}(of Lemma \ref{lemma:dividing})
Without a loss of generality, we can assume that $k=2^l$. It is sufficient to 
show that 
$$2^{l-1}X \subset H+\big\{\sum_{i=1}^d x_ia_i: |x_i|\le
N_i/2\big\}.$$ 
For this, because $0 \in X$, $2^{l-1}X \subset 2^lX\subset H+P$, any element
$x$ of $2^{l-1}X$ can be written as $x=h+\sum_{i=1}^d x_ia_i$, with
$|x_i|\le N_i$. Now, because $x\in 2^{l-1}X$, we have 
$$2x =2h + \sum_{i=1}^d (2x_i)a_i  \in 2^l X \subset H+P.$$
So there exist $h'\in H$ and integers $y_1,\dots,y_d$ with $|y_i|\le N_i$ such that $2h + \sum_{i=1}^d (2x_i)a_i = h'+ \sum_{i=1}^d y_i a_i$.  On the other hand, as $H+2P$ is proper (as $H+P$ is 2-proper) and the above elements are in $H+2P$, we must 
have $2h=h'$ and $2x_i = y_i$, and hence $|2x_i|\le N_i$.
\end{proof}
We note that in Lemma \ref{lemma:dividing} above, if $2N_i/k<1$ then $x_i=0$, which means that the direction $a_i$ plays no role in the information for $X$. So if we let $I$ be the collection of $i$ for which $2N_i/k \ge 1$, then $r=|I|\le d$ and
$X\subset H + \{\sum_{i\in I} x_ia_i: |x_i|\le
2N_i/k\}$, which is a symmetric coset-progression of size at most 
\begin{equation}\label{eqn:HP}
|H| \prod_{i\in I} \frac{4N_i+1}{k} \le \frac{2^r}{k^r}|H| \prod_{i=1}^d (2N_i+1) = \frac{2^r}{k^r} |H+P|.
\end{equation}

We can then deduce the following long range inverse theorem, which is an Abelian analog from \cite{NgV-Adv}.

\begin{theorem}\label{longrangeinversetheorem:Abelian}(Long Range Inverse Theorem) There exists a constant $C_0$ such that the following holds. Let $d$ be a positive integer. Assume that $X$ is a subset of a finite group $G$ such that $0\in X$ and 
$$|kX| \leq \frac{k^d}{2^{2^{2^{7d+C_0}}}}|X|,$$ 
for some integer $k\geq 2$ that may depend on $|X|$. Then there is a proper symmetric coset-progression $H+Q$ of rank $r\le d$ and cardinality 
$$|H+Q| \le  \frac{2^{2^{2^{7d+C_0}}} }{k^r} |kX|$$
such that $X\subset H+Q$.
\end{theorem}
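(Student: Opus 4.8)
The plan is to feed the hypothesis into Corollary \ref{cor:longrange}, then apply the dividing Lemma \ref{lemma:dividing}, and finally verify that a single absolute constant $C_0$ can be chosen large enough to absorb all the auxiliary constants that appear. The result is really a packaging of the two cited John-type ingredients together with the elementary dilation argument already set up above, so there is no deep new idea; the work is in the constant-chasing.

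First I would fix $C_0$ to be a sufficiently large absolute constant, depending only on the absolute constants $C_1,C_2$ of Theorems \ref{thm:proper} and \ref{thm:longrange}, characterized by the single requirement
$$2^{C_0}2^{d} \ge C_2 d^2 + 1 \qquad \text{for every integer } d \ge 1,$$
which is satisfiable since $d\mapsto (C_2 d^2+1)/2^{d}$ is bounded over the positive integers. In particular $2^{7d+C_0}=2^{C_0}2^{d}2^{6d}\ge C_2 d^2 2^{6d}$, hence $2^{2^{2^{7d+C_0}}}\ge 2^{2^{C_2 d^2 2^{6d}}}$, so the hypothesis $|kX|\le k^d|X|/2^{2^{2^{7d+C_0}}}$ forces $k^d|X|\ge 2^{2^{C_2 d^2 2^{6d}}}|kX|$ — exactly the hypothesis of Corollary \ref{cor:longrange}. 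Applying that corollary to $X$ produces a $2$-proper symmetric coset-progression $H+P$ of some rank $d'\le d$ with $kX\subset H+P$ and
$$|H+P|\le 2^{d}(C_1 d)^{3d^2/2}\,2^{d\,2^{C_2 d^2 2^{6d}}}\,|kX|.$$

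Next, since $0\in X$ and $H+P$ is $2$-proper, Lemma \ref{lemma:dividing} gives $X\subset H+\{\sum_{i=1}^{d'}x_i a_i:|x_i|\le 2N_i/k\}$, writing $P=\{\sum_{i=1}^{d'}x_i a_i:|x_i|\le N_i\}$. I would then drop the directions with $2N_i/k<1$, which contribute nothing, keeping the index set $I$ with $r:=|I|\le d'\le d$, and set $Q:=\{\sum_{i\in I}x_i a_i:|x_i|\le\lfloor 2N_i/k\rfloor\}$, so that $X\subset H+Q$ with $H+Q$ symmetric of rank $r$. Because $k\ge 2$ forces $\lfloor 2N_i/k\rfloor\le N_i$, the coset-progression $H+Q$ is a \emph{sub-box} of the proper $H+P$ and is therefore itself proper, and the volume estimate $|H+Q|\le (C_3/k)^{r}|H+P|$ of \eqref{eqn:HP} holds with an absolute constant $C_3$ (e.g. $C_3=4$, using $1\le 2N_i/k$ on $I$).

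Finally I would combine the two displays to get $|H+Q|\le k^{-r}\,C_3^{d}2^{d}(C_1 d)^{3d^2/2}\,2^{d\,2^{C_2 d^2 2^{6d}}}|kX|$, note that $C_3^{d}2^{d}(C_1 d)^{3d^2/2}$ is utterly negligible next to $2^{d\,2^{C_2 d^2 2^{6d}}}$ for every $d\ge 1$ (the former has exponent $O(d^2\log d)$, the latter exponent at least $2^{64 C_2}$), so the whole prefactor is at most $2^{2d\,2^{C_2 d^2 2^{6d}}}=2^{2^{\,1+\log_2 d+C_2 d^2 2^{6d}}}$, and observe that $1+\log_2 d+C_2 d^2 2^{6d}\le 2^{7d+C_0}$ by the defining inequality on $C_0$ (after dividing through by $2^{6d}$). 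This yields $|H+Q|\le 2^{2^{2^{7d+C_0}}}|kX|/k^{r}$, as required. The only step needing genuine attention is precisely this last bookkeeping: one must check that one absolute $C_0$ works simultaneously for all $d\ge 1$ and all integers $k\ge 2$, which is possible exactly because every constant in play grows only like $2^{O(d^2 2^{6d})}=2^{o(2^{7d})}$, whereas the target $2^{2^{2^{7d+C_0}}}$ carries a true extra exponential layer that $C_0$ controls.
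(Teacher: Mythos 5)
Your proof is correct and follows essentially the same route as the paper's: invoke Corollary \ref{cor:longrange}, then apply Lemma \ref{lemma:dividing} together with the size bound \eqref{eqn:HP}, then absorb all constants into $2^{2^{2^{7d+C_0}}}$. The only difference is that you carry out the constant-chasing explicitly, whereas the paper simply asserts ``as $C_0$ is sufficiently large''; your accounting (the defining inequality $2^{d+C_0}\ge C_2 d^2+1$ and the observation that $(1+\log_2 d)/2^{6d}\le 1$ for $d\ge 1$) is a valid way to make that assertion precise.
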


\begin{proof}(of Theorem \ref{longrangeinversetheorem:Abelian}) Under the assumption of the theorem, as $C_0$ is sufficiently large we can apply Corollary \ref{cor:longrange} to a 2-proper symmetric coset-progression $H+P$ of rank $0\le d'\le d$ and size 
$$|H+P| \le 2^d (C_1 d)^{3d^2/2} 2^{d2^{C_2 d^2 2^{6d}}} |kX|$$ 
such that 
$$ kX \subset H+P.$$
We then apply Lemma \ref{lemma:dividing} to obtain a symmetric proper coset-progression $H+Q$ of rank $r\le d$, which contains $X$ and by \eqref{eqn:HP}
$$|H+Q| \le \frac{2^{2^{2^{7d+C_0}}} }{k^r} |kX|.$$
\end{proof}

We complete the section with a torsion-free variant from \cite{NgV-Adv}.
\begin{theorem}\label{longrangeinversetheorem} There exists a constant $C_0$ such that the following holds. Let $d$ be a positive integer. Assume that $X$ is a subset of a torsion-free group such that $0\in X$ and 
$$|kX| \leq \frac{k^d}{2^{2^{2^{7d+C_0}}}}|X|,$$ 
for some integer $k\geq 2$ that may depend on $|X|$. Then there is a proper symmetric GAP $Q$ of rank $r\le d$ and cardinality 
$$|Q| \le  \frac{2^{2^{2^{7d+C_0}}} }{k^r} |kX|$$
such that $X\subset Q$.
\end{theorem}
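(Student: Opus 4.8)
The plan is to deduce Theorem~\ref{longrangeinversetheorem} from Theorem~\ref{longrangeinversetheorem:Abelian} by observing that a torsion-free abelian group contains no nontrivial finite subgroup, so any coset-progression $H+Q$ appearing in the conclusion of Theorem~\ref{longrangeinversetheorem:Abelian} automatically has $H=\{0\}$ and is therefore an honest GAP. Concretely, first I would note that $X$ may be assumed finite (otherwise $|kX|$ and the hypothesis are vacuous or trivial; or one restricts to the subgroup generated by a finite subset), and that the subgroup $\langle X\rangle$ generated by $X$ is a finitely generated torsion-free abelian group, hence isomorphic to $\Z^s$ for some $s$. Every subgroup of $\Z^s$ is free, and in particular the only finite subgroup is $\{0\}$.

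Then I would apply Theorem~\ref{longrangeinversetheorem:Abelian} verbatim to $X$ viewed inside $G:=\langle X\rangle$ (a bona fide finite-rank, but not finite, abelian group — here one should double-check that the cited corollaries, Corollary~\ref{cor:longrange} and Lemma~\ref{lemma:dividing}, only ever used finiteness of the subgroup $H$, not finiteness of the ambient group $G$; indeed Theorem~\ref{thm:longrange} and Theorem~\ref{thm:proper} are stated for general abelian $G$, and the dividing lemma's proof only uses properness, so nothing breaks). This yields a proper symmetric coset-progression $H+Q$ of rank $r\le d$ with $X\subset H+Q$ and $|H+Q|\le 2^{2^{2^{7d+C_0}}}k^{-r}|kX|$. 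Since $H$ is a finite subgroup of the torsion-free group $\langle X\rangle$, we get $H=\{0\}$, so $H+Q=Q$ is a proper symmetric GAP of rank $r\le d$, and the size bound $|Q|\le 2^{2^{2^{7d+C_0}}}k^{-r}|kX|$ is exactly the claimed one with the same constant $C_0$.

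The only mild obstacle is the bookkeeping needed to legitimately invoke the abelian machinery: one must confirm that none of the supporting results (Corollary~\ref{cor:longrange}, Lemma~\ref{lemma:dividing}, the size estimate \eqref{eqn:HP}, Theorem~\ref{longrangeinversetheorem:Abelian}) secretly used $|G|<\infty$ — they do not, as they are all phrased either for arbitrary abelian $G$ or only exploit the structure of the finite subgroup $H$. Alternatively, and perhaps cleaner for the write-up, one can avoid passing to $\langle X\rangle$ altogether: just rerun the proof of Theorem~\ref{longrangeinversetheorem:Abelian} word for word (apply Corollary~\ref{cor:longrange} to get a $2$-proper symmetric coset-progression $H+P\supset kX$, then Lemma~\ref{lemma:dividing} to shrink to $H+Q\supset X$), and at the final step invoke torsion-freeness to kill $H$. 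I would present the short argument in this second style, since it makes the proof self-contained given the already-established Abelian theorem.

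\begin{proof}(of Theorem \ref{longrangeinversetheorem}) We repeat the proof of Theorem \ref{longrangeinversetheorem:Abelian}. Under the stated hypothesis, and with $C_0$ sufficiently large, Corollary \ref{cor:longrange} produces a $2$-proper symmetric coset-progression $H+P$ of rank $0\le d'\le d$ with
$$|H+P| \le 2^d (C_1 d)^{3d^2/2} 2^{d2^{C_2 d^2 2^{6d}}} |kX| \qquad\text{and}\qquad kX \subset H+P.$$
Applying Lemma \ref{lemma:dividing} and the bound \eqref{eqn:HP}, we obtain a symmetric proper coset-progression $H+Q$ of rank $r\le d$ with $X\subset H+Q$ and
$$|H+Q| \le \frac{2^{2^{2^{7d+C_0}}}}{k^r}|kX|.$$
Here $H$ is a finite subgroup of the ambient torsion-free group, hence $H=\{0\}$. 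Therefore $Q=H+Q$ is a proper symmetric GAP of rank $r\le d$ containing $X$, with
$$|Q| \le \frac{2^{2^{2^{7d+C_0}}}}{k^r}|kX|,$$
as claimed.
\end{proof}
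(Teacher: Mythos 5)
Your proof is correct. In fact the paper offers no proof of Theorem~\ref{longrangeinversetheorem} at all, merely citing \cite{NgV-Adv}; your observation that the supporting results (Theorems~\ref{thm:proper} and~\ref{thm:longrange} from \cite{TVjohn}, Corollary~\ref{cor:longrange}, Lemma~\ref{lemma:dividing}, and the size estimate \eqref{eqn:HP}) never use finiteness of the ambient group, so that the Abelian proof reruns verbatim and torsion-freeness forces $H=\{0\}$, is exactly the intended derivation and gives the same constant.
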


\section{Inverse Littlewood-Offord in general finite Abelian groups: proof of Theorem \ref{theorem:ILO:Abelian}}\label{section:Abelian}


In the following we focus on the first part of the theorem, assuming $n^{\eps-1}\le \al \le 1-n^{\eps-1}$. Modifications for the second part will be discussed at the end. 

By fixing a non-degenerate bilinear form over $G$, for any $a\in G$ we have the standard identity
\begin{align*}
	\P(S = a) = \E\frac{1}{|G|}\sum_{\zeta\in G} e(\zeta\cdot(S - a)) =\frac{1}{|G|}  \E\sum_{\zeta\in G} e(\zeta \cdot S)e(-\zeta\cdot a),
\end{align*}
where $S=\sum_{i=1}^n x_i a_i$ and $x_i$ are iid copies of $\xi$. 

Hence, assume that $a$ is what maximizes the discrepancy, so that
$$\rho_\xi(A) =\left|\P(S = a) - \frac{1}{|G|}\right| = \frac{1}{|G|} \Big| \E \sum_{\zeta\in G, \zeta \neq 0} e(\zeta \cdot S)e(-\zeta\cdot a)\Big| \le \frac{1}{|G|} \Big| \E \sum_{\zeta\in G, \zeta \neq 0} e(\zeta \cdot S)\Big|.$$
By independence,
\begin{align*}
	|\E e( \zeta \cdot S)| = \Big|\prod_{i=1}^n \E e(x_i \zeta \cdot a_i)\Big| \leq \prod_{i=1}^n |1-\alpha + \alpha\cos(2\pi \zeta \cdot a_i )|.
\end{align*}
Note that  $|\sin  \pi x | \ge 2 \|x\|$ for any $x\in \R$, where we recall that $\|x\|=\|x\|_{\R/\Z}$ is the distance of $x$ to the nearest integer. Hence
$$1-\alpha + \alpha\cos(2\pi x) = 1-\al(1-\cos(2\pi x)) = 1- 2 \al \sin^2(\pi x) \le 1 -8 \al \|x\|^2 \le \exp(-8 \al \|x\|^2)$$ 
as well as
$$-(1-\alpha + \alpha\cos(2\pi x) ) = 2 \al \sin^2(\pi x) -1 \le 1 - 2(1-\al) \sin^2 (\pi x) \le 1 -8 (1-\al) \|x\|^2 \le \exp(-8 (1-\al) \|x\|^2).$$
It thus follows that
\begin{equation} \label{eqn:fourier3-1}
|1-\alpha + \alpha\cos(2\pi x)| \le \exp(-8 \min\{\al, 1-\al\} \|x\|^2).
\end{equation}


Without loss of generality we assume that $\al\le 1/2$, and hence we obtain the following inequality,
\begin{align}\label{concentration_upper_bound:groups}
	\rho_\xi(A) \leq \frac{1}{|G|}\sum_{\zeta \neq 0}\exp(-8\alpha \sum_{i=1}^n \big\| a_i \cdot \zeta\big\|^2).
\end{align}
Combining with our assumption on $\rho_\xi(A)$ yields
\begin{align*}
	n^{-C} \leq \frac{1}{|G|}\sum_{\zeta \neq 0}\exp(-8\alpha\sum_{i=1}^n \big\| a_i \cdot \zeta\big\|^2).
\end{align*}

\emph{Large level sets.} Now we split up the $\zeta$ by their effect on the sum. Let $S_\ell = \{\zeta\ \big| 4\alpha \sum_{i=1}^n \| a_i\cdot \zeta\|^2 \leq \ell\}$. We have
\begin{align*}
	n^{-C}\leq\rho_\xi(A)\leq\frac{1}{|G|}\sum_{\zeta \neq 0}\exp(-8\alpha\sum_{i=1}^n \big\| 2\pi a_i \cdot \zeta\big\|^2) \leq  \frac{1}{|G|}\sum_{\ell\geq 1} \exp(-2(\ell-1))|S_\ell|.
\end{align*}

Because $\sum_{m\geq 1}\exp(-m)<1$, there must be a level set $S_{\ell_0}$ such that 
\begin{align}\label{eqn:Sell_0}
	|S_{\ell_0}|\exp(-{\ell_0}+2) \geq \rho_\xi |G|.	
\end{align}

Because $\rho_\xi\geq n^{-C}$ and of course $|S_{\ell_0}|\leq |G|$ we have $\ell_0 \le C \log n$.

\emph{Double counting and the triangle inequality.} By double counting we have
\begin{align*}
	\sum_{i=1}^n 4\alpha \sum_{\zeta\in S_{\ell_0}}  \| a_i \cdot \zeta\|^2 = \sum_{\zeta\in S_{\ell_0}} 4\alpha \sum_{i=1}^n  \| a_i \cdot \zeta\|^2\leq {\ell_0}|S_{\ell_0}|. 	
\end{align*}
So by averaging, at least $n-n'$ of the  $a_i$ satisfy
\begin{align}\label{averaging:groups}
	\sum_{\zeta\in S_{\ell_0}} \| a_i \cdot \zeta\|^2 \leq \frac{{\ell_0}}{4 \alpha n'}|S_{\ell_0}|.
\end{align}
Let the set of $a_i$ satisfying \eqref{averaging:groups} be $A'$. The set $A\setminus A'$ will be our exceptional set. It remains to show that $A'$ is contained in a symmetric proper coset-progression.

Let $k$ be any positive integer. By the triangle inequality we have for any $a\in kA'$, writing $a = a_1 +\dots + a_k$ with $a_i\in A'$,
\begin{align*}
	\Big\|a \cdot \zeta\Big\|^2\leq \Big(\|a_1\cdot \zeta\| + \dots + \|a_k \cdot \zeta\|\Big)^2.
\end{align*}
By Cauchy-Schwarz 
\begin{align*}
	\Big(\|a_1 \cdot \zeta\| + \dots + \|a_k \cdot \zeta\|\Big)^2 \leq k\sum_{i=1}^k \| a_i \cdot \zeta\|^2.
\end{align*}
Therefore by equation~\eqref{averaging},
\begin{align*}
	\sum_{\zeta\in S_{\ell_0}} \Big\|a \cdot \zeta\Big\|^2 \leq \sum_{\zeta\in S_{\ell_0}} k\sum_{i=1}^k \| a_i \cdot \zeta\|^2 \leq k^2 \frac{{\ell_0}}{4\alpha n'}|S_{\ell_0}|.
\end{align*}

Of course for any $a \in jA'$ where $j\leq k$, we also have
\begin{align}\label{longrangeinverse:groups}
	\sum_{\zeta\in S_{\ell_0}}\Big\|2\pi a \cdot \zeta\Big\|^2\leq k^2 \frac{{\ell_0}}{4\alpha n'}|S_{\ell_0}|.
\end{align}
\emph{Dual sets.} Define 
$$S_{\ell_0}^* := \Big \{a\ \Big|\sum_{\zeta\in S_{\ell_0}} \| a\cdot \zeta\|^2 \leq \frac{1}{200}|S_{\ell_0}| \Big\}.$$ 
This is related to the concept of a dual in that it is the set of $a$ which are nearly orthogonal to $\zeta\in S_{\ell_0}$. It gives us the following inequality which is reminiscent of the equality on cardinalities one obtains for a vector space's dual,
\begin{align}\label{bound for dual set:groups}
	|S_{\ell_0}^*|\leq \frac{4|G|}{|S_{\ell_0}|}.
\end{align}

To see this, define $T_a = \sum_{\zeta\in S_{\ell_0}} \cos (2\pi a\cdot \zeta)$. Using the fact that $\cos (2\pi z) \geq 1 - 100\|z\|^2$  for any $z\in \R$  we have for any $a\in S_{\ell_0}^*$
\begin{align*}
	T_a\geq \sum_{\zeta\in S_{\ell_0}} \left(1-100\|a \cdot \zeta\|^2\right)\geq \frac{1}{2}|S_{\ell_0}|.
\end{align*}

We also have the upper bound on $T_a$ given by,
\begin{align*}
	\sum_{a\in G} T_a^2 &= \sum_{a\in G} \Big(\sum_{\zeta\in S_{\ell_0}} \cos(2\pi a\cdot \zeta)  \Big)^2 \\
	&=\sum_{a\in G} \Big(\sum_{\zeta\in S_{\ell_0}} \Re(e(a\cdot \zeta)) \Big)^2 \\
	&\leq \sum_{a\in G} \Big(\sum_{\zeta\in S_{\ell_0}} e(a\cdot \zeta) \Big)\overline{\Big(\sum_{\zeta\in S_{\ell_0}} e(a\cdot \zeta) \Big)} \\
	&= \sum_{a\in G} \sum_{\zeta_1,\zeta_2\in S_{\ell_0}} e( a \cdot (\zeta_1-\zeta_2)) \\
	&= |G| |S_{\ell_0}|.
\end{align*}

Noting in the last equality that there are $|S_{\ell_0}|$ choices of $\zeta_1=\zeta_2$ which contribute a $|G|$, and nothing else contributes anything, (by the identity we used at the beginning that $\sum_{a\in G}\cos(2\pi a \cdot  x) = |G|\mathbb{I}_{x=0}$). Now we can average and conclude that no more than $\frac{4|G|}{|S_{\ell_0}|}$ elements $a\in G$ can be in $|S_{\ell_0}^*|$, confirming \eqref{bound for dual set:groups}. 

Set 
\begin{equation}\label{eqn:Ab:k}
k :=\left\lfloor \sqrt{\frac{{\alpha}n' }{200{\ell_0}}} \right\rfloor.
\end{equation}
Note that 
$$k\ge n^{\eps/2}/\sqrt{200 \ell_0} \ge n^{\eps/3}$$ 
as $n$ is sufficiently large.

By \eqref{longrangeinverse:groups} we have  that
$$\bigcup_{l=1}^{k} lA' \subset S_{\ell_0}^*.$$ 
Setting $A'' = A'\cup \{0\}$ we have (choosing $0$ in a sum $k-l$ times is equivalent to simply adding $l$ elements of $A'$)
$$kA'' = \{0\}  \cup \bigcup_{l=1}^{k} lA' \subset S_{\ell_0}^*.$$
This gives us the bound
\begin{align*}
	|kA''| \leq |S_{\ell_0}^*| \leq \frac{4|G|}{|S_{\ell_0}|} \leq  4\rho_\xi^{-1}\exp(-{\ell_0}+2),
\end{align*}
where in the second inequality we used \eqref{eqn:Sell_0} and in the third inequality we used \eqref{bound for dual set:groups}.

\emph{Long Range Inverse Theorem.} Recall our hypothesis that $\rho_\xi(A)\geq n^{-C}$. Therefore, with $k$ from \eqref{eqn:Ab:k} we clearly have 
$$|kA''|\leq 4 \exp(-{\ell_0}+2) n^C \le  \frac{k^{4C/\eps}}{2^{2^{2^{28C/\eps+C_0}}}} \le  \frac{k^{4C/\eps}}{2^{2^{2^{28C/\eps+C_0}}}} |A''|$$
 if we assume that $C =o( \log \log \log n)$. 
 
 It thus follows from Theorem~\ref{longrangeinversetheorem:Abelian} that $A''$ is contained in a symmetric proper coset-progression $H+Q$ of rank $r\le 4C/\eps$ and size 
\begin{align*}
|H+Q| & \le 2^{2^{2^{28C/\eps+C_0}}} 4 \exp(-\ell_0+2) \frac{(\rho_\xi(A))^{-1}}{{k}^{r}} \\
& \le 2^{2^{2^{28C/\eps+C_0}}} 4 \exp(-\ell_0+2) (\sqrt{200 \ell_0})^{r}   \frac{(\rho_\xi(A))^{-1}}{(\al n')^{r/2}}\\
& \le  2^{2^{2^{28C/\eps+C_0'}}} \frac{(\rho_\xi(A))^{-1}}{(\al n')^{r/2}},
\end{align*}
concluding the proof of the first statement of Theorem \ref{theorem:ILO:Abelian} in its quantitative form of \eqref{eqn:Ab:HP:quant}. $\hfill \qed$

Now we discuss the modifications to prove the second part of Theorem \ref{theorem:ILO:Abelian} when $1/2 \le \al \le 1$. In this case we have
$$\rho_\xi(A) =\left|\P(S = a) - \frac{1}{|G|}\right| \leq \frac{1}{|G|}\sum_{\zeta \neq 0} \prod_{i=1}^n |1-\al +\al \cos(2\pi \zeta \cdot a_i )| =\frac{1}{|G|}\sum_{\zeta \neq 0} \prod_{i=1}^n (1-\al +\al \cos(2\pi \zeta \cdot a_i )).$$
Next, as $\cos(2\pi x) \le \frac{1}{2}  + \frac{1}{2} \cos^2(2\pi x) = 1- \frac{1}{2}\sin^2(2\pi x) \le 1 - 2\|2x\|^2$  we obtain that 
$$\rho_\xi(A)  \leq \frac{1}{|G|}\sum_{\zeta \neq 0} \exp(-2\al \sum_i \|\zeta \cdot 2a_i \|^2).$$
The rest of the proof is similar to that of the first case above applied to the set $\{2a, a\in A\}$ in place of $A$; we omit the details.

\section{Inverse result for classical random walks: proof of Theorem \ref{theorem:ILO:m} and Theorem \ref{theorem:ILO:m:Abelian}} 

It suffices to prove Theorem \ref{theorem:ILO:m:Abelian}. We write $S=X_1+\dots+X_m$ where $X_i$ are chosen uniformly from $A=\{-a_1,a_1,\dots, -a_n,a_n\}$ and without loss of generality $a_i \neq 0$ for all $i$. 

We have
\begin{equation}\label{eqn:fourier1:m} \rho_m= \left|\P(S=a)-\frac{1}{|G|}\right|= \Big|\E \frac{1}{|G|} \sum_{\zeta \neq 0} e (\zeta \cdot (S-a)) \Big|=\Big| \E \frac{1}{|G|} \sum_{\zeta \neq 0} e ( \zeta \cdot   S) e(-\zeta \cdot a)\Big|  \le \frac{1}{|G|} \sum_{ \zeta\neq 0} |\E e (\zeta \cdot S)|.
\end{equation}
By independence
\begin{equation*} \label{eqn:fourier2:M}  |\E e(\zeta \cdot S)| = \prod_{j=1}^m |\E e(\zeta \cdot X_j)| = \prod_{j=1}^m \Big|\frac{1}{n} \sum^n_{i=1} \frac{1}{2}(e(\zeta \cdot a_i)+e(-\zeta \cdot a_i))\Big| =  \prod_{j=1}^m \Big|\frac{1}{n} \sum^n_{i=1} \cos(2\pi \zeta \cdot a_i)\Big|,  \end{equation*}

because of the symmetry of $A$. 

Using again $|\sin(\pi x)| \ge 2\|x\|$,
$$\frac{1}{n} \sum^n_{i=1} \cos(2\pi \zeta \cdot a_i) = 1 -\frac{2}{n} \sum_i \sin^2(\pi \zeta \cdot a_i) \le 1  -\frac{8}{n} \sum_i \|\zeta \cdot a_i\|^2 \le \exp(- \frac{8}{n} \sum_i \|\zeta \cdot a_i\|^2)$$
and
$$-\frac{1}{n} \sum^n_{i=1} \cos(2\pi \zeta \cdot a_i) =\frac{1}{n} \sum^n_{i=1} \cos(2\pi (\zeta \cdot a_i +1/2)) \le \exp(-\frac{8}{n} \sum_i \|\zeta \cdot a_i +1/2\|^2)).$$
Hence we have
$$\left|\frac{1}{n} \sum^n_{i=1} \cos(2\pi \zeta \cdot a_i)\right| \le  \exp\left(-\frac{8}{n} \min\big\{ \sum_i \|\zeta \cdot a_i\|^2, \sum_i \|\zeta \cdot a_i +1/2\|^2\big\}\right)$$

Consequently, we obtain a key inequality, where $\al= m/n$
\begin{equation} \label{eqn:fourier4:m'}
\rho_m   \le  \frac{1}{|G|} \sum_{\zeta \neq 0}  \exp\left(-8\al \min\big\{ \sum_i \|\zeta \cdot a_i\|^2, \sum_i \|\zeta \cdot a_i +1/2\|^2\big\}\right).
\end{equation}
We note that here the exponent is different from that of \eqref{concentration_upper_bound:groups} of the previous section. To handle this difficulty, we decompose $G$ into $G_1=G_1(A):= \{\zeta\in G, \sum_i \|\zeta \cdot a_i\|^2 \ge \sum_i \|\zeta \cdot a_i +1/2\|^2\}$ and $G_2=G_2(A):=\{\zeta\in G, \sum_i \|\zeta \cdot a_i\|^2 < \sum_i \|\zeta \cdot a_i +1/2\|^2\}$. We thus obtain that 
\begin{equation} \label{eqn:fourier4:m}
\rho_m  \le  \frac{1}{|G|} \sum_{\zeta \neq 0, \zeta \in G_1}  \exp(-8\al  \sum_i \|\zeta \cdot a_i +1/2\|^2) + \frac{1}{|G|} \sum_{\zeta \neq 0, \zeta \in G_2}  \exp(-8\al  \sum_i \|\zeta \cdot a_i\|^2):=\Sigma_1 + \Sigma_2.
\end{equation}
Without loss of generality \footnote{The other case will be easier and can be treated similarly as in Section \ref{section:Abelian}.} we assume that 
\begin{equation}\label{eqn:G12}
|\Sigma_1| \ge |\Sigma_2|.
\end{equation}

 We can then proceed similarly as in  the proof of Theorem \ref{theorem:ILO:Abelian} with some modifications. 


\emph{Large level sets.} Let $S_\ell = \{\zeta\in G_1 \big| 4 \alpha \sum_{i=1}^n \|a_i \cdot \zeta +1/2\|^2 \leq \ell\}$. We have
\begin{align*}
	\frac{1}{2m^C} \leq\rho_m(A)/2 \leq \Sigma_2= \frac{1}{|G|}\sum_{\zeta \neq 0, \zeta\in G_2}\exp(-8\alpha\sum_{i=1}^n \big\|a_i \cdot \zeta +1/2 \big\|^2) \leq  \frac{1}{|G|}\sum_{\ell\geq 1} \exp(-2(\ell-1))|S_\ell|.
\end{align*}
Therefore there must be a level set $S_{\ell_0}$ such that 
\begin{align}\label{eqn:Sell_0:m}
	|S_{\ell_0}|\exp(-{\ell_0}+2) \geq \rho_m |G|/2.	
\end{align}

Because $\rho_m \geq 1/m^C|A|$ and of course $|S_{\ell_0}|\leq |G|$ we have $\ell_0 \le C \log m$.

\emph{Double counting and the triangle inequality.} By double counting we have
\begin{align*}
	\sum_{i=1}^n 4\alpha \sum_{\zeta\in S_{\ell_0}}  \| a_i \cdot \zeta +1/2\|^2 = \sum_{\zeta\in S_{\ell_0}} 4\alpha \sum_{i=1}^n  \| a_i \cdot \zeta+1/2\|^2\leq {\ell_0}|S_{\ell_0}|. 	
\end{align*}
So by averaging, at least $n-n'$ of the  $a_i$ satisfy
\begin{align}\label{averaging:groups:m}
	\sum_{\zeta\in S_{\ell_0}} \| a_i \cdot \zeta+1/2\|^2 \leq \frac{{\ell_0}}{ 4\alpha n'}|S_{\ell_0}|.
\end{align}
Let the set of $a_i$ satisfying \eqref{averaging:groups:m} be $A'$. The set $A\setminus A'$ will be our exceptional set. It remains to show that $A'$ is contained in a proper GAP.

Let $j$ be any positive integer. By Cauchy-Schwarz we have for any $a\in (2j)A'$, writing $a=\sum_{i=1}^{2j} a_i$, 
$$\|a \cdot \zeta\|^2=\|a \cdot \zeta+j\|^2   = \|\sum_{i=1}^{2j} a_i \cdot \zeta + j\|^2 =  \|\sum_{i=1}^{2j} (a_i \cdot \zeta + 1/2)\|^2 \le 2j \sum_{i=1}^{2j}  \|(a_i \cdot \zeta + 1/2)\|^2.$$
Furthermore, for any $a\in (2j+1)A'$, writing $a=\sum_{i=1}^{2j+1} a_i$, 
$$\|a \cdot \zeta+1/2\|^2  =\|a \cdot \zeta+j+1/2\|^2= \|\sum_{i=1}^{2j+1} a_i \cdot \zeta + j+1/2\|^2 =  \|\sum_{i=1}^{2j+1} (a_i \cdot \zeta + 1/2)\|^2 \le (2j+1) \sum_{i=1}^{2j}  \|(a_i \cdot \zeta + 1/2)\|^2.$$

Set 
\begin{equation}\label{eqn:Ab:k:m}
k :=\left\lfloor \sqrt{\frac{{\alpha}n' }{100{\ell_0}}} \right\rfloor.
\end{equation}
Note that 
$$k \asymp \sqrt{m n'/100 \ell_0 n}  \ge \sqrt{m/ \eps 100 \ell_0} \ge m^{1/3}$$ 
if we assume that $m$ is sufficiently large, given $\eps$. 

\emph{Dual sets.} Define  
$$S_{\ell_0,1}^* := \Big \{a\ \big|\sum_{\zeta\in S_{\ell_0}} \| a\cdot \zeta\|^2 \leq \frac{1}{200}|S_{\ell_0}| \Big\}$$ 
and 
$$S_{\ell_0,2}^* := \Big \{a\ \big|\sum_{\zeta\in S_{\ell_0}} \| a\cdot \zeta +1/2\|^2 \leq \frac{1}{200}|S_{\ell_0}| \Big\}.$$ 

By \eqref{averaging:groups:m}, by the choice of $k$, and by Cauchy-Schwarz estimates above we have  that
$$\bigcup_{1\le l \le k/2} (2l)A' \subset S_{\ell_0,1}^*$$
and
 $$\bigcup_{1\le l \le (k-1)/2} (2l+1)A' \subset S_{\ell_0,2}^*.$$

Setting $A'' = A'\cup \{0\}$ we have 
$$kA'' = \{0\}  \cup \bigcup_{l=1}^{k} lA' \subset S_{\ell_0,1}^* \cup S_{\ell_0,2}^* .$$
As  in Section \ref{section:Abelian} we have 
$$|S_{\ell_0,1}^*| \le \frac{4|G|}{|S_{\ell_0}|}.$$ 
We claim similarly for $S_{\ell_0,2}^*$ that
\begin{align}\label{bound for dual set:groups:S2}
	|S_{\ell_0,2}^*|\leq \frac{4|G|}{|S_{\ell_0}|}.
\end{align}
Indeed, define $T_a = -\sum_{\zeta\in S_{\ell_0}} \cos (2\pi a\cdot \zeta)=  \sum_{\zeta\in S_{\ell_0}} \cos (2\pi (a\cdot \zeta +1/2))$. Again, as $\cos (2\pi z) \geq 1 - 100\|z\|^2$  for any $z\in \R$, we have for any $a\in S_{\ell_0,2}^*$
\begin{align*}
	T_a\geq \sum_{\zeta\in S_{\ell_0}} \left(1-100\|a \cdot \zeta+1/2\|^2\right)\geq \frac{1}{2}|S_{\ell_0}|.
\end{align*}
We also have the upper bound on $T_a$ given by
\begin{align*}
	\sum_{a\in G} T_a^2 &\leq \sum_{a\in G} \Big(-\sum_{\zeta\in S_{\ell_0}} \cos(2\pi a\cdot \zeta)  \Big)^2 \\
	&\leq \sum_{a\in G} \sum_{\zeta_1,\zeta_2\in S_{\ell_0}} e(2\pi a \cdot (\zeta_1-\zeta_2)) \\
	&= |G| |S_{\ell_0}|.
\end{align*}

Putting these bounds together we have that, with $A''= A' \cup \{0\}$, 
\begin{align*}
	|kA''| \leq |S_{\ell_0,1}^*| + |S_{\ell_0,2}^*| \leq 2 \frac{4|G|}{|S_{\ell_0}|} \leq  8\rho_m^{-1}\exp(-{\ell_0}+2),
\end{align*}

\emph{Long Range Inverse Theorem.} Recall our hypothesis that $\rho_\xi(A)\geq 1/m^{-C}$. Therefore, with $k$ from \eqref{eqn:Ab:k:m} we clearly have 
$$|kA''| \le  \frac{k^{4C}}{2^{2^{2^{28C+C_0}}}} \le  \frac{k^{4C}}{2^{2^{2^{28C+C_0}}}}|A''|$$
 if we assume that $C =o( \log \log \log m)$. 
 
 It thus follows from Theorem~\ref{longrangeinversetheorem:Abelian} that $A''$ is contained in a symmetric proper coset-progression $H+Q$ of rank $4C$ and size 
\begin{align*}
|H+Q| & \le 2^{2^{2^{28C+C_0}}} 8 \exp(-\ell_0+2) \frac{(\rho_m(A))^{-1}}{{k}^{r}} \\
& \le 2^{2^{2^{28C+C_0}}} 8 \exp(-\ell_0+2) (\sqrt{100 \ell_0})^{r}   \frac{(\rho_m(A))^{-1}}{(\al n')^{r/2}}\\
& \le  2^{2^{2^{28C+C_0'}}} \frac{(\rho_m(A))^{-1}}{(m n'/n)^{r/2}},
\end{align*}
concluding the proof of Theorem \ref{theorem:ILO:m:Abelian} in its quantitative form of \eqref{eqn:Ab:m:HP:quant}.

\section{Inverse results with constraints over real numbers: proof of Theorem \ref{theorem:ILO:ast} and \ref{theorem:ILO:ast:sparse}}

It suffices to prove the more general result, Theorem \ref{theorem:ILO:ast:sparse}. Notice that we are in the torsion-free setting. We will first pass to the iid case. Set 
$$\alpha:=\frac{m}{n}.$$
We then see that $n^{\eps -1} \le \al \le 1- n^{\eps -1}$.

Let $\xi$ be a Bernoulli random variable with parameter $\alpha$, i.e. 
$$\P(\xi=1) =\alpha \mbox{ and } \P(\xi=0)=1-\alpha.$$
We recall that  $\rho_\xi(A) = \sup_{a} \P\Big(\sum_{i=1}^n a_ix_i =a\Big)$ where $x_i$ are iid copies of $\xi$. We will pass from $\rho_m^\ast$ to $\rho_\xi$ using the following claim
\begin{claim}\label{claim:passing} We have
$$ \rho_\xi(A) =\Omega(\rho_{m}^\ast(A)/\sqrt{n\alpha}).$$
\end{claim}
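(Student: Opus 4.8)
The plan is to compare the two probabilities by conditioning on the number of ones among the iid Bernoulli coordinates. Write $x_1,\dots,x_n$ for iid copies of $\xi$, so that $N := \#\{i : x_i = 1\}$ is a binomial random variable with parameters $n$ and $\alpha$. Conditioned on $\{N = j\}$, the set $\{i : x_i = 1\}$ is a uniformly random $j$-element subset of $[n]$, and hence for any target $a$ we have $\P(\sum_i a_i x_i = a \mid N = j) = \rho^{(j)}(A)$ where $\rho^{(j)}(A) := \sup_a \P(\sum_{i_1 < \dots < i_j} a_{i_k} = a)$ is the uniform-$j$-subset analogue of $\rho_m^\ast$; in particular for $j = m$ this is exactly (the sup-free version of) $\rho_m^\ast(A)$.

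First I would fix the maximizer $a^\ast$ for $\rho_m^\ast(A)$, i.e. the value $a^\ast$ with $\P(\sum_{i_1<\dots<i_m} a_{i_k} = a^\ast) = \rho_m^\ast(A)$ where the probability is over a uniform $m$-subset. Then
\begin{align*}
\rho_\xi(A) \ge \P\Big(\sum_{i=1}^n a_i x_i = a^\ast\Big) \ge \P\Big(\sum_{i=1}^n a_i x_i = a^\ast \ \Big|\ N = m\Big) \P(N = m) = \rho_m^\ast(A)\, \P(N=m).
\end{align*}
So the whole claim reduces to the lower bound $\P(N = m) = \Omega(1/\sqrt{n\alpha})$, i.e. the probability that a $\mathrm{Bin}(n,\alpha)$ hits its mean (recall $m = \alpha n$, assumed an integer, or the nearest integer to it) is of order $1/\sqrt{n\alpha}$.

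The main obstacle — really the only nontrivial point — is this local central limit / Stirling estimate for the binomial at its mode. The standard deviation of $N$ is $\sqrt{n\alpha(1-\alpha)}$, and the local CLT gives $\P(N = m) \sim 1/\sqrt{2\pi n\alpha(1-\alpha)}$. Since in our regime $n^{\eps-1} \le \alpha \le 1 - n^{\eps -1}$ we have $1-\alpha \ge n^{\eps-1}$, so $n\alpha(1-\alpha) \le n\alpha$ and thus $\P(N=m) \ge c/\sqrt{n\alpha}$ for an absolute constant $c$; this is exactly the claimed $\Omega(\rho_m^\ast(A)/\sqrt{n\alpha})$. I would prove the needed inequality $\P(N=m)\ge c/\sqrt{n\alpha(1-\alpha)}$ directly from the Stirling bounds $\sqrt{2\pi}\, k^{k+1/2}e^{-k} \le k! \le e\, k^{k+1/2}e^{-k}$ applied to $\binom{n}{m}\alpha^m(1-\alpha)^{n-m}$, which after cancellation leaves only the ratios $n^n/(m^m (n-m)^{n-m})$ against $\alpha^m(1-\alpha)^{n-m} = (m/n)^m((n-m)/n)^{n-m}$ — these cancel exactly, leaving a bounded-below constant times $1/\sqrt{n\alpha(1-\alpha)}$. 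One subtlety worth a sentence: $m=\alpha n$ need not be an integer, in which case one takes $\alpha$ to be defined so that $\alpha n = m$ is an integer (which is legitimate since $m \le n$ is given as an integer from the start), so no rounding issue arises.
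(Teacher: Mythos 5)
Your proposal is correct and is essentially identical to the paper's proof: condition on the event $\{\sum_i x_i = m\}$ (your $\{N=m\}$), note that the conditional law of the index set is uniform over $m$-subsets so the conditional concentration is exactly $\rho_m^\ast(A)$, and lower-bound $\P(N=m)$ by $\Omega(1/\sqrt{n\alpha})$ via Stirling. Your write-up is in fact a bit more careful than the paper's about the local-CLT normalization ($\sqrt{n\alpha(1-\alpha)}$ versus $\sqrt{n\alpha}$, which is harmless here since only a lower bound is needed and $1-\alpha\le 1$) and about the integrality of $m$.
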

\begin{proof} Using Stirling approximation, 
$$\P(x_1+\dots+ x_n =m) \asymp 1/\sqrt{n\alpha}$$ 
and conditioning on $\sum_i x_i=m$ 
$$\P\left(x_{i_1}=1,\dots, x_{i_m}=1, x_{j}=0,j\notin\{i_1,\dots,i_m\}=0 | \sum_{i=1}^n x_i=m\right)= \frac{1}{\binom{n}{m}}.$$
Hence
$$ \sup_{a} \P\Big(\sum_{i=1}^n a_ix_i =a  | \sum_{i=1}^n x_i =m \Big)=\rho_m^\ast(A).$$
So
\begin{align*}
\rho_\xi(A) &=  \sup_{a} \P\Big(\sum_{i=1}^n a_ix_i =a\Big)\\
&\ge  \sup_{a} \P\Big(\sum_{i=1}^n a_ix_i =a  \wedge \sum_{i=1}^n x_i =m \Big) \\
&=  \sup_{a} \P\Big(\sum_{i=1}^n a_ix_i =a  \big| \sum_{i=1}^n x_i =m \Big) \P\big( \sum_{i=1}^n x_i =m\big) \\
&\ge \rho_m^\ast(A)/\sqrt{n\alpha}. 
\end{align*}
\end{proof}
By Claim \ref{claim:passing}, it suffices to study $\rho_\xi(A)$. 

\subsection{Proof of Theorem \ref{theorem:ILO:ast:sparse}}
We will again follow the approach of \cite{NgV-Adv} (as well as the proofs of Theorem \ref{theorem:ILO:Abelian}, Theorem \ref{theorem:ILO:m:Abelian}, and of \cite[Theorem 7.3]{NgW}) with some major modifications.

First of all, by using Freiman-isomorphism (see for instance \cite{NgV-Adv}) it suffices to assume $A$ to be a set of integers. In what follows we will choose $N$ to be a sufficiently large integer (given $A$), and $p$ to be a sufficiently large prime number given $N$ (such as $p\geq 2^n\sum_{i=1}^n (|a_i|+N+1)$).  We will work over modulo $p$. 

We consider the translation $A'=\{a_1',\dots,a_n'\}$ of $A$, where $a_i':=a_i+N$.
In what follows, we compute $\rho_\xi(A')$ using discrete Fourier analysis. Writing $e_p(x) = \exp(2\pi \sqrt{-1}x/p)$, we have the standard identity
\begin{align*}
	\rho_\xi(A') = \P(S = a) = \E\frac{1}{p}\sum_{\zeta\in\F_p} e_p(\zeta(S - a)) = \E\frac{1}{p}\sum_{\zeta\in\F_p} e_p(\zeta S)e_p(-\zeta a),
\end{align*}
with $S=\sum_{i=1}^n a_i'x_i$ and $x_i$ are iid copies of $\xi$ (i.e. Bernoulli random variables taking values $0$ with probability $1-\alpha$ and $1$ otherwise). Let $y_i$ be iid symmetrized versions of $x_i$. In other words $y_i = x_i - x_i'$ with $x_i'$ another iid copy of $x_i$. Then let $y_i'$ be a lazy version of $y_i$. Explicitly,
	$$\P(y_i' = z) =
	\begin{cases}
		\alpha(1-\alpha)/2 & \text{ if } z=\pm 1 \\ 
		\frac{1}{2} + \frac{1}{2}((1-\alpha)^2 + \alpha^2) & \text{ if } z=0 
	\end{cases}.
	$$
	Define $\alpha' = \alpha(1-\alpha)/2$. In other words $\P(y_i' = \pm 1) = \alpha'$ and $\P(y_i' = 0) = 1- 2\alpha'$. Observe $\alpha'\leq \min(1/8, \alpha/2)$.

By independence 
\begin{align*}
	\E e_p( \zeta S) = \prod_{i=1}^n \E e_p(\zeta x_i a_i') \leq \prod_{i=1}^n (\frac{1}{2}(|\E e_p(\zeta x_i a_i')|^2 + 1)) = \prod_{i=1}^n |\E e_p(\zeta y_i' a_i')| = \prod_{i=1}^n (1-2\alpha' + 2\alpha'\cos(2\pi\zeta a_i'/p)).
\end{align*}
Hence
\begin{align*}
|\rho_\xi(A') -1/p| = |\P(S = a)-1/p| &\le \frac{1}{p}\sum_{\zeta\in\F_p, \zeta \neq 0}  \prod_{i=1}^n |(1-2\alpha' + 2\alpha'\cos(2\pi\zeta a_i'/p))|\\
&=  \frac{1}{p}\sum_{\zeta\in\F_p, \zeta \neq 0}  \prod_{i=1}^n |(1-2\alpha' + 2\alpha'\cos(\pi\zeta a_i'/p))|
\end{align*}
where we dropped the $2$ as multiplication by $2$ is a bijection on $\F_p$ for $p>2$.

Again by using $|\sin \pi z| \geq 2\|z\|$ and $|\cos \frac{\pi x}{p}| \leq 1 - \frac{1}{2}\sin^2 \frac{\pi x}{p} \leq 1 - 2\big\|\frac{x}{p}\big\|^2$ we have
\begin{align*}
	0\le 1 - 2\alpha' + 2\alpha'\cos(\pi\zeta a_i'/p) \leq 1-2\alpha' + 2\alpha'(1 - 2\|\zeta a_i'/p\|^2) \leq 1 - 4\alpha'\|a_i'\zeta /p\|^2 \leq \exp(-4\alpha'\|\frac{a_i'\zeta}{p}\|^2).
\end{align*}

Therefore we obtain the following inequality,
\begin{align}\label{concentration_upper_bound}
	\rho_\xi(A') \leq  \frac{1}{p}\sum_{\zeta\in \F_p}\exp(-4\alpha' \sum_{i=1}^n \big\|\frac{a_i'\zeta}{p}\big\|^2).
\end{align}

Combining with our assumption on $\rho_\xi(A')$ yields
\begin{align*}
	n^{-C} \leq \frac{1}{p}\sum_{\zeta\in \F_p}\exp(-4\alpha'\sum_{i=1}^n \big\|\frac{a_i'\zeta}{p}\big\|^2).
\end{align*}

\emph{Large level sets.} Now we split up the $\zeta$ by their effect on the sum. Let $S_\ell = \{\zeta\ \big| 2\alpha' \sum_{i=1}^n \|a_i'\zeta/p\|^2 \leq \ell\}$. We have
\begin{align*}
	n^{-C}\leq\rho_\xi(A')\leq\frac{1}{p}\sum_{\zeta\in \F_p}\exp(-4\alpha'\sum_{i=1}^n \big\|\frac{a_i'\zeta}{p}\big\|^2) \leq  \frac{1}{p}\sum_{\ell\geq 1} \exp(-2(\ell-1))|S_\ell|.
\end{align*}

Because $\rho_\xi\geq n^{-C}$ and of course $|S_{\ell_0}|\leq p$ we have $\ell_0 = O(\log n)$.

\emph{Double counting and the triangle inequality.} By double counting we have
\begin{align*}
	\sum_{i=1}^n2\alpha'\sum_{\zeta\in S_{\ell_0}}  \|\frac{a_i'\zeta}{p}\|^2 = \sum_{\zeta\in S_{\ell_0}}2\alpha'\sum_{i=1}^n  \|\frac{a_i'\zeta}{p}\|^2\leq {\ell_0}|S_{\ell_0}|. 	
\end{align*}
So by averaging, at least $n-n'$ of the  $a_i'$ satisfy
\begin{align}\label{averaging}
	\sum_{\zeta\in S_{\ell_0}} \|\frac{a_i'\zeta}{p}\|^2 \leq \frac{{\ell_0}}{2\alpha'n'}|S_{\ell_0}|.
\end{align}
Let the set of $a_i'$ satisfying \eqref{averaging} be $A_1'$. The set $A'\setminus A_1'$ will be our exceptional set. It remains to show that $A_1'$ is contained in a proper GAP. 

Note that as before, by Cauchy-Schwarz, for any $a \in jA_1'$ where $j\leq k$, we also have
\begin{align}\label{longrangeinverse}
	\sum_{\zeta\in S_{\ell_0}}\Big\|\frac{a\zeta}{p}\Big\|^2\leq k^2 \frac{{\ell_0}}{2\alpha'n'}|S_{\ell_0}|.
\end{align}
\emph{Dual sets.} Define 
$$S_{\ell_0}^* := \Big \{a\ \big|\sum_{\zeta\in S_{\ell_0}} \|a\zeta/p\|^2 \leq \frac{1}{200}|S_{\ell_0}| \Big\}.$$ 
As shown in the previous sections, we also have
\begin{align}\label{bound for dual set}
	|S_{\ell_0}^*|\leq \frac{4p}{|S_{\ell_0}|}.
\end{align}

Set 
$$k :=\left\lfloor \sqrt{\frac{\alpha' n' }{200{\ell_0}}} \right\rfloor.$$
Then clearly $k\ge n^{\eps/2}$ if $n$ is sufficiently large.

By \eqref{longrangeinverse} we have  that
$$\bigcup_{l=1}^{2k} lA_1' \subset S_{\ell_0}^*.$$ 
Setting $A_1'' = A_1'\cup \{0\}$ we have 
$$2kA_1'' = \{0\}  \cup \bigcup_{l=1}^{2k} lA_1' \subset S_{\ell_0}^*.$$
This gives us the bound
\begin{align*}
	|2kA_1''| \leq |S_{\ell_0}^*| \leq \frac{4p}{|S_{\ell_0}|} \leq  4\rho_\xi^{-1}\exp(-{\ell_0}+2),
\end{align*}
where in the second inequality we used \eqref{bound for dual set}.

From now on our treatment is different from the previous sections. Let $A_1$ be the set of elements $a$ of $A$ for which $a+N \in A_1'$. Pick $a_1\in A_1$, i.e. $a_1+N \in A_1'$ and use it to define
$$B_1:=\{a-a_1, a\in A_1\}.$$ 
Then we can write 
$$lA_1' = lA_1 + lN = l(a_1+N) + lB_1.$$
Now as $B_1$ contains 0, we have (with the convention that $ 0B_1=\{0\}$)
$$lB_1 = \bigcup_{l'=0}^{l} l'B_1.$$  
So together we have 
\begin{align*}
    2k A_1'' &=  \bigcup_{l=0}^{2k}  \bigcup_{l'=0}^{l} l'B_1 + l(a_1+N) =  \bigcup_{l'=0}^{2k}  \bigcup_{l=l'}^{2k} l'B_1 + l(a_1+N) \supseteq  \bigcup_{l'=k}^{2k}  \bigcup_{l=k}^{2k} l'B_1 + l(a_1+N),
\end{align*}
and it thus follows that 
	$$ \Big|\bigcup_{l'=k}^{2k} l'B_1 + \{k,\dots, 2k\}(a_1 + N)\Big| \le |2kA_1''| \le  4\rho_\xi^{-1}(A')\exp(-\ell_0 + 2) = O\big((\rho_m^\ast)^{-1}(A) \sqrt{\alpha n}  \exp(-\ell_0+2)\big).$$
Choosing $N$ sufficiently large, the sets $\bigcup_{l'=0}^{l} l'B_1 + k'(N+a_1)$ where $k\leq l'\leq 2k$ are disjoint. We thus obtain that 
\begin{align*}
    \Big|\bigcup_{l'=0}^{l} l'B_1 \Big|
    &= O\Big((\rho^\ast_m)^{-1}(A) \sqrt{\alpha n}  \exp(-\ell_0+2)/k\Big)\\
&= O\left((\rho^\ast_m)^{-1}(A) \sqrt{\alpha n}  \exp(-\ell_0+2)\sqrt{\frac{200\ell_0}{\alpha'n'}}\right)\\
&=O\left( \sqrt{\frac{n}{n'}}(\rho^\ast_m)^{-1}(A)  \exp(-\ell_0+2)\sqrt{200\ell_0} \right).
\end{align*}
Note that $1/4\leq \alpha'/\alpha \leq 1/2$ so the ratio may be absorbed into the constant. Again because $0\in B_1$,
\begin{align}\label{lri_condition}
|kB_1 |= O\left(\sqrt{\frac{n}{n'}} (\rho^\ast_m)^{-1}(A) \exp(-\ell_0+2)\sqrt{200\ell_0} \right).
\end{align}

\emph{Long Range Inverse Theorem.} We now consider our $v_i$ as integers again. Remember we chose $p$ so large that $kB_1$ should be the same size in the integers as in the integers mod $p$. Recall our hypothesis that $\rho^*_m(A)\geq n^{-C}$. Therefore $\rho^{-1}_m\leq k^{2C/\eps}$ and it follows from Theorem~\ref{longrangeinversetheorem} that $B_1$ is contained in a symmetric proper GAP $P$ of rank $r \le 2C/\eps$ and size 
\begin{align*}
|P| & =O\left( 2^{2^{2^{14C+C_0}}} \sqrt{\frac{n}{n'}}  (\rho^\ast_m)^{-1}(A)  \exp(-\ell_0+2)\sqrt{200\ell_0}/{k}^{r'}\right) \\
& = O\left(  2^{2^{2^{28C+C_0}}} \sqrt{\frac{n}{n'}}  (\rho^\ast_m)^{-1}(A)/(\alpha'n')^{r'/2}\right)  \\
& = O\left(  2^{2^{2^{28C+C_0}}} \sqrt{\frac{n}{n'}}  (\rho^\ast_m)^{-1}(A)/(\alpha n')^{r'/2}\right) 
\end{align*}
such that $B_1 \subset P$. It thus follows that $A_1 \subset a_1+P$, concluding the proof of Theorem \ref{theorem:ILO:ast:sparse} in its quantitative form \eqref{eqn:ILO:ast:sparse:quant}.


\section{Proof of the corollaries}\label{appendix:lemma}


 \begin{proof}(of Corollary \ref{cor:Abelian:1}) Assume otherwise that there exists $a$ such that $\P\Big(\sum_{i=1}^n a_ix_i =a\Big) \ge K_\eps/\sqrt{n}$ for sufficiently large $K_\eps$. (For instance $K_\epsilon > C_\epsilon + C_\epsilon^{-1}$ would work.) Then 
 $$\rho_\xi(A) \ge  K_\eps/\sqrt{n} - 1/|G| \ge (K_\eps-C_\epsilon^{-1})/\sqrt{n}.$$
 Applying (ii) of Theorem \ref{theorem:ILO:Abelian} with $C=1/2$ and $n'=\lfloor \eps n \rfloor$ we obtain a symmetric proper coset-progression $H+P$ with rank $r\ge 0$ and size at most $\max\{1, C_\eps n^{1/2}/ (K_{\eps} -C_\epsilon^{-1}) (\eps n)^{r/2}) \}$ which contains at least $n-n'$ elements of $\{2a, a\in A\}$. There are two cases to consider: (1) If $r \ge 1$ then $\max\{1, C_\eps n^{1/2}/ (K_{\eps} -C_\epsilon^{-1}) (\eps n)^{r/2}) \}=1$. This implies $|H+P|=1$ or equivalently $H+P = \{0\}$, a contradiction because $\{2a, a\in A\}$ cannot have more than $n-n'$ zero elements. (2) If $r=0$ then $H+P=H$, which would be a subgroup of size at most $C_\eps n^{1/2}$ that contains  at least $n-n'$ elements of $\{2a, a\in A\}$, contradicting our assumption. 

For the consequence, view $\Z/q\Z$ as $\{0,\dots, q-1\}$ and assume that $H$ consists of elements of form $rb$, where $r|q$. As this group is of size at most $C_\eps \sqrt{n}$, we have $r>2$. Now if $H$ contains at least $n-n'$ elements of $\{2a,a\in A\}$, then $H$ must contain at least one element $2a$, where $a\in A$ is reduced. However this means that $2a =rb$ for some $b$, so either $a=(r/2) b$ or $a=r (b/2)$. In either case $(a,q)>1$, and hence $a$ is not reduced, a contradiction.
\end{proof}

\begin{proof}(of Corollary \ref{cor:forward}) 
For the first statement, assume otherwise that $\rho^\ast(A) \geq K_\eps \sqrt{n}/n'$ for some large constant $K_\eps$. Then we can apply Theorem \ref{theorem:ILO:ast:sparse} to obtain a GAP $P$ containing at least $n-n'$ elements of $A$ such that
$$|P|=\max\left\{1,C_\ep \sqrt{\frac{n}{n'}}n'\big/\left(K_\eps\sqrt{n}(n')^{r'/2}\right)\right\}.$$
Because there are no more than $n-n'-1$ elements of $A$ taking the same value, $P$ must contain at least 2 distinct elements, and hence its rank $r'$ is at least 1. This is a contradiction because, 
$$C_\ep \sqrt{\frac{n}{n'}}n'\big/\left(K_\eps\sqrt{n}(n')^{1/2}\right) = C_\ep/K_\eps < 1, $$
provided that $K_\eps$ is large. 

Likewise, for the second statement, assume otherwise that $\rho^\ast(A) \geq K_\eps n^{-3/2}$ for some large constant $K_\eps$. Apply Theorem \ref{theorem:ILO:ast} for $n'=n/2$ to obtain a GAP $P$ containing at least $n/2$ elements of $A$ and
$$|P|=C_\eps\sqrt{\frac{n}{n/2}}n^{3/2}\big/\left(K_\eps(n/2)^{r'/2}\right) = C_\eps/K_\eps,$$
because $r' \geq 1$. If we choose $K_\eps$ to be large then $|P| < n/2$, which contradicts the fact that $P$ contains at least $n/2$ elements of $A$ (which are all distinct by assumption).
\end{proof}

\begin{proof}(of Corollary \ref{cor:forward:rho_m}) Assume otherwise that $\rho_m^\ast(A) \geq K_\eps \frac{n}{n'\sqrt{m}}$ for some large constant $K_\eps$. Then we can apply Theorem \ref{theorem:ILO:ast} to obtain a GAP $P$ containing at least $n-n'$ elements of $A$, where $$|P|=C_\eps \left(\sqrt{\frac{n}{n'}}K_\eps \frac{n}{n'\sqrt{m}}\right)^{-1} \big/(m n'/n)^{r/2}.$$
Because there are no more than $n-n'-1$ elements of $A$ taking the same value, $P$ must contain at least 2 distinct elements, and hence its rank $r$ is at least 1. This is a contradiction because, 
$$|P|= C_\eps\sqrt{\frac{n}{n'}}\left(K_\eps \frac{n}{n'\sqrt{m}}\right)^{-1} \big/(m n'/n)^{1/2}  = C_\eps/K_\eps < 1, $$
provided that $K_\eps$ is large.
\end{proof}

\begin{proof}(of Corollary \ref{cor:mix}) For the first assertion, assume otherwise that $\rho_m \ge \frac{\delta}{ |G|}$. Choose $n' =\eps n$ and apply Theorem \ref{theorem:ILO:m:Abelian}, then there exists a symmetric proper coset-progression $H+Q$ of rank $r$ which contains all of the $a_1,\dots, a_k$ and of size at most $\max\{1, O_\eps (\delta^{-1} |G| /(m \eps)^{r/2})\}$. By assumption, as $C$ is chosen sufficiently large, we must have $r\ge k$, but then this would yield that $H+Q$ has size 1 because $(m \eps)^{k/2} \ge (C\eps)^{k/2} \delta^{-1} |G|$, a contradiction.

Now for the second assertion we will show that with high probability $a_1,\dots, a_k$ satisfy the above condition, that there is no symmetric coset-progression $H+Q$ of rank $r\le k-1$ containing them. When this is shown, we will have $|\P(X_1+\dots+X_m=a) -\frac{1}{q}| \le \frac{\delta}{q}$ for any $a$, which would clearly imply the $\delta$-mixing requirement.

In what follows we count the number of tuples $(a_1,\dots, a_k)$ that are elements of some ``economical'' $H+Q$, ranging over all symmetric coset-progressions $H+Q$. We first gather information about the potential symmetric coset-progressions $H+Q$ of small size that might contain all $a_i$. Because the $a_i$ are reduced, the rank $r$ of $H+Q$ must be at least 1 (as otherwise all $a_i$ are in $H$, hence not reduced). In $\Z/q\Z$, there are at most $O(\log q)$ ways to choose $H=H_r$ as the subgroup of form $\{a \in \Z/q\Z, r|a\}$.  Now assume that $g_1,\dots, g_r$ have been chosen; for a symmetric coset-progression $H+Q= \{h+ x_{1} g_1+\dots+ x_{r} g_r, |x_j| \le N_j, h\in H, 1\le j \le r\} $ over the generators $g_1,\dots, g_n$, if $a_1,\dots, a_k \in H+Q$, then  
$$a_i = h_i + x_{i1} g_1+\dots+ x_{ir} g_r =: h_i + \Bx_i \cdot \Bg.$$ 
For $1\le j\le r$, let 
$$\al_j =\max\{|x_{1j}|,\dots, |x_{kj}|\}$$ 
and define 
$$H+Q_e := \{h+ x_{1} g_1+\dots+ x_{r} g_r, |x_j| \le \al_i,h\in H, 1\le j \le r\}.$$
Observe that $a_i \in H+Q_e \subset H+Q$ for all $i$. Hence, to avoid double counting in situations such as $a_i \in H+Q \subset H+Q'$, we will assume $H+Q$ to  have the economical form $H+Q_e$ as above. We next show the following counting claim.
\begin{claim}\label{claim:vectorscounting} Let $k,r,s$ be given positive integers where $s$ is sufficiently large. Let $N$ be the number of integral vectors $\Bx_1=(x_{11},\dots, x_{1r}),\dots, \Bx_k=(x_{k1},\dots, x_{kr})$ satisfying $\prod_{j=1}^r \al_j  \le s$, where $\al_j =\max\{|x_{1j}|,\dots, |x_{kj}|\}$, is bounded by 
$$N \le 2^{O(kr)} s^k  (\log s)^{r-1}.$$ 
\end{claim}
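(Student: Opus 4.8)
The plan is to count the vectors by first fixing the tuple $(\al_1,\dots,\al_r)$ of coordinate-wise maxima and then counting the vectors consistent with that choice. First I would observe that once $(\al_1,\dots,\al_r)$ is fixed, each $\Bx_i = (x_{i1},\dots,x_{ir})$ must satisfy $|x_{ij}| \le \al_j$ for all $j$, so there are at most $\prod_{j=1}^r (2\al_j+1)$ choices for each $\Bx_i$, hence at most $\prod_{j=1}^r (2\al_j+1)^k$ choices for the whole $k$-tuple $(\Bx_1,\dots,\Bx_k)$. Using $2\al_j+1 \le 2^2 \al_j$ when $\al_j \ge 1$ (and treating the harmless case $\al_j = 0$ separately, where that coordinate contributes a bounded factor), this is at most $2^{O(kr)} \prod_{j=1}^r \al_j^{k} = 2^{O(kr)} \big(\prod_j \al_j\big)^k \le 2^{O(kr)} s^k$, using the constraint $\prod_j \al_j \le s$.

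The remaining task is to bound the number of admissible tuples $(\al_1,\dots,\al_r)$ of nonnegative integers with $\prod_{j=1}^r \al_j \le s$; call this number $M_r(s)$. The key step is the standard divisor-type estimate $M_r(s) = O(s (\log s)^{r-1})$ (up to treating zero entries, which only contributes an extra $2^{O(r)}$ factor since a zero in some coordinate forces that coordinate of every $\Bx_i$ to vanish and removes that coordinate from the product constraint). This can be proved by induction on $r$: for $r=1$ there are $O(s)$ choices; for the inductive step, sum over the value of $\al_r = t \in \{1,\dots,s\}$ and use $M_{r-1}(s/t) = O\big((s/t)(\log s)^{r-2}\big)$, so that $M_r(s) \le \sum_{t=1}^s O\big((s/t)(\log s)^{r-2}\big) = O\big(s (\log s)^{r-2} \sum_{t=1}^s \tfrac1t\big) = O\big(s (\log s)^{r-1}\big)$. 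Multiplying the two bounds gives $N \le 2^{O(kr)} s^k (\log s)^{r-1}$, as claimed.

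The main obstacle — really the only subtlety — is bookkeeping of the constants so that all the auxiliary $2^{O(r)}$ and $2^{O(kr)}$ factors (from coordinates with $\al_j = 0$, from $2\al_j+1 \le 4\al_j$, and from the induction) get absorbed cleanly into the $2^{O(kr)}$ prefactor, and making sure the hypothesis that $s$ is sufficiently large is used only where needed (to guarantee $\log s \ge 1$ in the harmonic-sum step). None of these steps is deep; the divisor bound $\sum_{t \le s} 1/t = O(\log s)$ does all the real work.
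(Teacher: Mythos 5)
Your decomposition---partition by the tuple $(\al_1,\dots,\al_r)$ of coordinate-wise maxima, estimate each piece, then combine---is the natural discrete analogue of the paper's volume-packing argument, so the plan is sound. But the execution loses a factor of $s$, and what your steps actually prove is $N \le 2^{O(kr)}s^{k+1}(\log s)^{r-1}$, not the claimed $2^{O(kr)}s^{k}(\log s)^{r-1}$.

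There are two compounding problems. First, the per-tuple count: once you fix $(\al_1,\dots,\al_r)$ as the \emph{exact} coordinate-wise maxima, the number of compatible $(\Bx_1,\dots,\Bx_k)$ is not $\prod_j(2\al_j+1)^k$. That counts all tuples with $|x_{ij}|\le\al_j$, including those whose maxima are strictly smaller; using it makes your ``partition'' massively over-count. The correct per-tuple count is
$$\prod_{j=1}^r\Big[(2\al_j+1)^k-(2\al_j-1)^k\Big]\;\le\;2^{O(kr)}\prod_{j=1}^r\al_j^{\,k-1},$$
with exponent $k-1$ rather than $k$. This density-versus-CDF distinction is exactly what the paper encodes when it integrates against the joint density $\propto\prod_j k\,\al_j^{\,k-1}$ of the maxima. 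Second, ``multiplying the two bounds'' does not give what you wrote: $2^{O(kr)}s^k$ times $M_r(s)=O\big(s(\log s)^{r-1}\big)$ is $2^{O(kr)}s^{k+1}(\log s)^{r-1}$; the extra $s$ does not vanish.

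The fix is to use the tight per-tuple count and \emph{sum}, not multiply by $M_r(s)$:
$$N\;\le\;2^{O(kr)}\sum_{\substack{\al_1,\dots,\al_r\ge1\\ \prod_j\al_j\le s}}\ \prod_{j=1}^r\al_j^{\,k-1}.$$
Summing out $\al_1$ first gives $\sum_{\al_1\le s/P}\al_1^{k-1}\le 2^k(s/P)^k/k$ with $P=\prod_{j\ge2}\al_j$, which converts each remaining factor $\al_j^{\,k-1}$ into $\al_j^{-1}$; then $\sum_{P\le s}\prod_{j\ge2}\al_j^{-1}\le(1+\log s)^{r-1}$, which is where your divisor-type harmonic estimate should actually enter. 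Together this yields $N\le 2^{O(kr)}s^k(\log s)^{r-1}$ and matches the paper's continuous computation. (The cases with some $\al_j=0$ are indeed harmless, as you note, and contribute only a $2^{O(r)}$ factor.)
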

Note that this bound for $r=1$ is rather trivial (but near optimal).
\begin{proof}(of Claim \ref{claim:vectorscounting})
	With a loss of a factor of $2^{kr}$ we assume that $x_{ij}$ are positive integers. By a volume packing argument, it suffices to compute the volume of the set $S$ of positive real vectors $\Bx_1,\dots, \Bx_k$ in $[1,\infty)^r$ satisfying $\prod_{j=1}^r \al_j  \le s$.

Viewing $\Bx_1,\dots, \Bx_k$ as vectors chosen uniformly from the box $[1,s]^r$, then $\al_1,\dots, \al_k$ are iid, of distribution function $\P(\al \le x) = (x/(s-1))^k$ for $1\le x\le s$. Hence the joint density of $(\al_1,\dots, \al_r)$ is $k^r (1/(s-1))^r (x_1/(s-1))^{k-1} \dots (x_r/(s-1))^{k-1}$. So
\begin{align*}
	\frac{\Vol(S)}{(s-1)^{rk}} = \P\Big(\prod \al_i \le s\Big) &=\int_{(x_1,\dots, x_r) \in [1,s]^r} 1_{x_1\dots x_r \le s} k^r \Big(\frac{1}{s-1}\Big)^r \Big(\frac{x_1}{s-1}\Big)^{k-1} \dots \Big(\frac{x_r}{s-1}\Big)^{k-1} dx_1\dots dx_r\\
	&= k^r \Big(\frac{1}{s-1}\Big)^{r k}  \int_{[1,s]^{r-1}} (x_1 \dots x_{r-1}\Big)^{k-1} \bigg[\int_1^{s/x_1\dots x_{r-1}} x_r^{k-1}dx_r\bigg] dx_1\dots dx_{r-1}\\
	&=k^{r-1}   \Big(\frac{1}{s-1}\Big)^{r k} s^k  \int_{[1,s]^{r-1}} (1/x_1) \dots (1/x_{r-1}) dx_1\dots dx_{r-1} \\
	& = k^{r-1}   \Big(\frac{1}{s-1}\Big)^{r k} s^k (\log s)^{r-1}.
\end{align*}
It thus follows that $\Vol(S) \le k^r s^k  (\log s)^{r-1}$, and hence the number of vectors $\Bx_1,\dots, \Bx_k$ of (not necessarily positive) integral entries satisfying  $\prod_{j=1}^r \al_j  \le s$ is bounded by $ 2^{O(kr)} s^k  (\log s)^{r-1}$ as desired.
\end{proof}

Using the above claim,  for a given $H$ of size $h$, for each $1\le r \le k-1$, there are at most $q^r$ ways to choose the generators $g_1,\dots, g_r$ and form a $Q_e$ of volume at most $s=C_\eps q /m^{r/2} h$ using these generators. By the above claim the set $\CC_{r,H}$ of such vectors $a_1,\dots, a_k$ (i.e. $(h_1,\Bx_1),\dots, (h_k,\Bx_k)$) that are contained in an economical  symmetric coset-progressions $H+Q_e$ of size at most $h \times (C_\eps \delta^{-1} q /h m^{r/2})$ is bounded by 
$$O\Big(q^r 2^{kr} h^k (C_\eps  \delta^{-1} q /m^{r/2} h)^k \log^{r-1}q\Big)$$
where the first factor $q^r$ comes from the choices of $g_1,\dots, g_r$, the second factor $2^{kr}$ comes from the signs of $x_{ij}$, the third factor $h^k$ comes from the choices of $h_i\in H$, and the remaining factors come from Claim \ref{claim:vectorscounting}.
 
The probability that a random reduced tuple $(a_1,\dots, a_k)$ is one of such tuple (contained by structure) is thus bounded by
\begin{align*}
\frac{1}{(\phi(q))^k}\sum_{H}\sum_{r=1}^{k-1}  O\Big (q^r  2^{kr} h^k (C_\eps  \delta^{-1} q /m^{r/2} h)^k \log^{r-1}q \Big)   & = \Big(\frac{c}{\log \log q}\Big)^k O\Big(\sum_{r=1}^{k-1} 2^{kr} (C_\eps^k \delta^{-k} q^r/m^{rk/2})  \log^{r}q \Big) \\
& = \Big(\frac{C'_\eps}{\log \log q}\Big)^k O\Big(\sum_{r=1}^{k-1} (q (\log q)/ (\delta^2 m)^{k/2})^r\Big)\\
&= O(t^{-k/2})
\end{align*}
where we used the fact that $m \ge C_{\eps}'' t \delta^{-2} (\log \log q)^2 (q \log q)^{2/k}$ and that $\phi(q) \ge c q/\log \log q$ for a sufficiently small constant $c$.


\end{proof}

{\bf Acknowledgements.} We would like to thank V. Vu for helpful discussion and suggestions. 


\begin{thebibliography}{99}

\bibitem{B} B.~Bollob\'as, \emph{Random Graphs}, Academic Press, New York.
\vskip .05in


\bibitem{BGGT} E. Breuillard, B. Green, R. Guralnick, and T. Tao, \emph{ Expansion in finite simple groups of Lie type}, J. Eur. Math. Soc. (JEMS) 17 (2015), no. 6, 1367-1434. 
\vskip .05in


\bibitem{HB'}  J. Dai and M. Hildebrand,\emph{ Random random walks on the integers mod $n$}. Statist. Probab. Lett. 35 (1997), no. 4, 371-379.
\vskip .05in




\bibitem{DH} C. Dou and M. Hildebrand,  \emph{Enumeration and random random walks on finite groups}. Ann. Probab. 24 (1996), no. 2, 987-1000. 

 \vskip .05in
 
\bibitem{E} P.~Erd\H{o}s, \emph{On a lemma of Littlewood and Offord}, Bull. Amer. Math. Soc. 51 (1945), 898-902.
\vskip .05in

\bibitem{EM} P.~Erd\H{o}s and L.~Moser, \emph{Elementary Problems and Solutions}: Solutions: E736.  Amer. Math. Monthly, 54 (1947), no. 4, 229-230.
\vskip .05in

\bibitem{GR} B. Green and I. Ruzsa, \emph{Freiman’s theorem in an arbitrary abelian group}, J. London Math. Soc. 75 (1) (2007) 163-175.
\vskip .05in


\bibitem{Griggs} J. Griggs, \emph{ On the distribution of sums of residues}. 
Bull. Amer. Math. Soc. (N.S.) 28 (1993), no. 2, 329-333. 
\vskip .05in

\bibitem{H} G.~Hal\'asz, \emph{Estimates for the concentration function of combinatorial number
theory and probability}, Period. Math. Hungar. 8 (1977), no. 3-4, 197-211.
\vskip .05in



\bibitem{HB} M. Hildebrand, \emph{Random Walks Supported on Random Points of $\Z/n\Z$}, Probab. Theory Related Fields. 100.2 (191-203).
\vskip .05in

\bibitem{Kat} G.~Katona,  \emph{On a conjecture of Erd\H os and a stronger form of Sperner's theorem}. Studia Sci. Math. Hungar 1 (1966), 59-63.
\vskip .05in

\bibitem{Kle} D.~Kleitman,  \emph{On a lemma of Littlewood and Offord on the distributions of linear combinations of vectors},
Advances in Math. 5 (1970), 155-157.
\vskip .05in



\bibitem{LO}  J.~E.~Littlewood and A.~C.~Offord, \emph{On the number of real roots of a random algebraic equation}. III. Rec. Math. Mat. Sbornik N.S.  12 ,  (1943). 277--286.
\vskip .05in

\bibitem{Ng-Comb} H.~Nguyen, \emph{On the singularity of random combinatorial matrices}, SIAM J. Discrete Mathematics, 27 (2013), no.
1, 447-458.
\vskip .05in

\bibitem{NgW} H.~Nguyen and M.~M~.Wood,  \emph{Random integral matrices: universality of surjectivity and the cokernel}, submitted.
 
\vskip .05in


\bibitem{NgV-Adv} H.~Nguyen and V.~Vu, \emph{Optimal Littlewood-Offord theorems}, Advances in Mathematics, Vol. 226 6 (2011), 5298-5319.
\vskip .05in



\bibitem{Roi} Y. Roichman (1996), \emph{On Random Random Walks}. Ann. Probab. 24.2 (1001-1011).
\vskip .05in


\bibitem{Sand} T. Sanders, \emph{ On the Bogolyubov-Ruzsa lemma}. Anal. PDE 5(3) (2012), 627-655.
\vskip .05in


\bibitem{SSz} A.~S\'ark\"ozy and E.~Szemer\'edi, \emph{\"Uber ein Problem von Erd\H{o}s und Moser},
Acta Arithmetica 11 (1965), 205-208.
 
\vskip .05in

\bibitem{Stan} R.~Stanley,  \emph{Weyl groups, the hard Lefschetz theorem, and the Sperner property}, SIAM J. Algebraic Discrete Methods 1  (1980), no. 2, 168-184.
\vskip .05in
\bibitem{Tao} T.~Tao, \emph{ Inverse theorems for sets and measures of polynomial growth}, The Quarterly Journal of Mathematics, Volume 68, Issue 1, March 2017, Pages 13-57. 
\vskip .05in

\bibitem{TVjohn} T.~Tao and V.~Vu, \emph{ John-type theorems for generalized arithmetic progressions and iterated sumsets}, Adv. Math. 219 (2008), no. 2, 428-449.
\vskip .05in

\bibitem{TVstrong} T.~Tao and  V.~Vu, \emph{A sharp inverse Littlewood-Offord theorem}, Random Structures and Algorithms, Vol. 37 4 (2010), 525–539.
\vskip .05in

\bibitem{TVinverse} T.~Tao and V.~Vu, \emph{Inverse Littlewood-Offord theorems and the condition number of random matrices}, Annals of Mathematics (2) 169 (2009), no 2, 595-632.
\vskip .05in


\bibitem{TVbook} T.~Tao and V.~Vu, \emph{Additive Combinatorics}, Cambridge Univ. Press, 2006.
\vskip .05in

\bibitem{VW} R.C. Vaughan and T.D. Wooley, \emph{ On a problem related to one of Littlewood and Offord},
Quart. J. Math. Oxford Ser. (2) 42 (1991), no. 167, 379-386. 
\vskip .05in







\end{thebibliography}
\end{document}